\documentclass[10pt,reqno]{amsart}
\usepackage{epsfig,amssymb,amsmath,version}
\usepackage{amssymb,graphicx,fancybox,mathrsfs,multirow}

\usepackage{url,hyperref}

\usepackage{subfigure}
\usepackage{color}
\usepackage{cases}
\usepackage{mathtools}

\textheight=21.5cm
\textwidth=15cm
\setlength{\oddsidemargin}{0.7cm}
\setlength{\evensidemargin}{0.7cm}

\catcode`\@=11 \theoremstyle{plain}
\@addtoreset{equation}{section}   
\renewcommand{\theequation}{\arabic{section}.\arabic{equation}}
\@addtoreset{figure}{section}
\renewcommand\thefigure{\thesection.\@arabic\c@figure}
\renewcommand{\thefigure}{\arabic{section}.\arabic{figure}}

\newtheorem{thm}{\bf Theorem}[section]
\newtheorem{lmm}{\bf Lemma}[section]
\newtheorem{cor}{\bf Corollary}[section]

\newtheorem{rem}{\bf Remark}[section]




\theoremstyle{definition}
\newtheorem{defn}{\bf Definition}[section]

\numberwithin{table}{section}

\newcommand{\D}[3]{{{}_{#1}}{\rm D}_{#2}^{#3}}
\newcommand{\I}[3]{{}_{#1}{\rm I}_{#2}^{#3}}
\newcommand{\CD}[3]{{}_{#1}^{C}{\rm D}_{#2}^{#3}}

\begin{document}
\bibliographystyle{plain}
\graphicspath{{./figs/}}
\baselineskip 13pt

{\title[Log orthogonal functions] {Log orthogonal functions:  approximation  properties and applications}
\author{Sheng Chen${}^{1}$\; and \;  Jie Shen${}^{2}$}
\thanks{${}^{1}$ Beijing Computational Science Research Center, Beijing 100193, P.R. China. \& School of Mathematics and Statistics, Jiangsu Normal University, Xuzhou 221116, China. The research of S. Chen  is partially supported by the Postdoctoral Science Foundation of China (Grant No. BX20180032, 2019M650459), NSFC 11801235,  and the Natural Science Foundation of the Jiangsu Higher Education Institutions of China (Grant No. BK20181002).
Email: shengchen@csrc.ac.cn.}
\thanks{${}^2$ Corresponding~ author. Department of Mathematics, Purdue University, West Lafayette,
  IN 47907-1957, USA. J.S. is  partially  supported by NSF grants
        DMS-1620262, DMS-1720442   and AFOSR grant FA9550-16-1-0102. Email: shen7@purdue.edu.}}

\begin{abstract}
We present  two new classes of orthogonal functions, log orthogonal functions (LOFs) and generalized log orthogonal functions (GLOFs), which are constructed  by applying a $\log$ mapping to  Laguerre polynomials. We develop basic approximation theory for these new orthogonal functions and apply them to solve several typical fractional differential equations whose solutions exhibit weak singularities. Our error analysis and numerical results show that our methods based on the  new orthogonal functions are particularly suitable for functions which have weak singularities at one endpoint, and can lead to exponential convergence rate, as opposed to  low algebraic rates if usual orthogonal polynomials are used.
\end{abstract}

\keywords{Log orthogonal functions; Laguerre functions; mapped spectral methods; fractional differential equations; weak singularity}
 \subjclass[2010]{65N35;  65M70;  41A05;  41A25}

 \maketitle
\section{Introduction}

It is well-known that classical
spectral methods can provide high accuracy for problems with smooth solutions \cite{Sp1,Boyd01,Sp2,ShenTangWang2011}, but may not have any advantage for problems with non-smooth solutions. On the other hand, solutions of many practical applications involve  weakly singular solutions, such as in non-smooth domains, with non-matching boundary conditions,  in integral equations with singular/weakly singular  kernels, and in fractional differential equations. One effective strategy in finite differences/finite elements is to employ a local adaptive procedure  \cite{Morin1}, but this strategy can  not be effectively extended to the global spectral method. Hence, in order to develop accurate spectral methods for problems with non-smooth solutions, one has to choose suitable basis functions
which can effectively approximate the underlying non-smooth solutions.
A popular strategy in this regard is to enrich the usual polynomial based approximation space by adding special functions which capture the singular behavior of the underlying problem, for examples, the
so called singular functions method \cite{MR0443377},
extended or generalized finite element method
(GFEM/XFEM) (cf. \cite{Babuvska2012,Fires} and the references therein), and in  the context of spectral methods, the enriched spectral-tau method \cite{Boyd1991} and the enriched spectral-Galerkin method \cite{Che.S18}. Another effective  strategy in  the context of   spectral method is to construct special orthogonal functions which are suitable for a certain  class of problems with singular behaviors.
In addition to classical orthogonal polynomials, one can use suitable mappings to classical orthogonal polynomials to construct orthogonal functions  in weighted Sobolev spaces, leading to the so called
  mapped spectral methods (cf. \cite{Boyd01} for an extensive disscussion). In  \cite{MR932149,MR892255,She.W04,She.W05b}, the mapped spectral methods have been successfully used in constructing efficient spectral methods for problems in unbounded domains, and in M\"untz Galerkin method \cite{MR3531734} for a special class of singular problems. On the other hand,
  Boyd \cite{Boyd86} briefly discussed several possible alternatives  to deal with weak singularities at both endpoints through different mappings.

 In this paper, we are concerned with problems which exhibit weakly singular behaviors at the initial time for initial value problems or at one endpoint for boundary value problems. We  construct special classes of orthogonal functions, through a suitable log mapping to Laguerre functions,  which  are capable of resolving  weak singularities. We shall develop basic approximation results for  two new classes of  orthogonal functions, log orthogonal functions (LOFs) and generalized log orthogonal functions (GLOFs). In particular, these results indicate that approximation by the LOFs and GLOFs to functions behaving like $t^r(-\log t)^{k}$ near $t=0$ will converge exponentially for any real $r\geq 0,~k\in\mathbb{N}_0$. In fact, we believe that this is the first set of basis functions which can approximate regular polynomials $t^k\,(k\in \mathbb{N}_0)$ and  weakly singular functions like $t^r(-\log t)^{k}~(0<r<1,~k\in\mathbb{N}_0)$ with exponential accuracy.  Thus, LOFs and GLOFs are particularly suitable for problems whose solutions exhibit weak singularities behaving like $\sum_{i} c_it^{r_i}(-\log t)^{k_i}$ near $t=0$ for small $r_i>0$. 
 In particular, solutions of  time fractional differential equations  and boundary value problems with one-sided fractional derivatives fall into this category.   Hence, the spectral methods using GLOFs that we propose in this paper can be used to deal with
a large class of fractional differential equations having weak singularities at the initial time or at one endpoint.
 
{ Numerical solution of fractional differential equations (FDEs) has been a subject of intensive investigation in recent years, cf. for instance \cite{Mee.T04,Sun.W06,EHR07,Erv.R07,JLZ13} (and the references therein) for finite-difference and finite-element methods, and \cite{LZL12, Li.X09,Li.X10,zayernouri2013fractional,chen2016generalized} (and the references therein) for spectral methods. However, most of the error analysis are derived in the context of  usual Sobolev spaces which are not quite suitable for FDEs. In our previous works \cite{chen2016generalized,mao2016efficient}, we developed an  error analysis using the generalized Jacobi functions based on the non-uniformly weighted spaces which showed that, for some model FDEs whose solutions behave as $(t-a)^r (b-t)^s h(t)$ with known $r,\,s>0$  and  smooth $h(t)$, the error may converge exponentially as long as the data function is smooth in the usual sense. However, for more general FDEs such as those with variable coefficients or nonlinearity, the singular behavior of their solutions is unknown {\it a priori}, so approximations by generalized Jacobi functions can not achieve desired accuracy.
 However, GLOFs can handle functions with unknown endpoint singularity since they can approximate  singular functions of the form $\sum_{i}c_it^{r_i}(-\log t)^{k_i}$ with  exponential accuracy. 
In fact, we show in Corollary \ref{singularity_appro} that GLOFs can achieve exponential convergence for typical singular functions $t^r(-\log t)^{k}$ for any  $r\geq 0,~k\in\mathbb{N}_0$. To the best of our knowledge, this is  the first set of basis functions which can approximate the singular solutions of the form in \eqref{singularity_solu} with exponential accuracy.}

The rest of the paper is organized as follows. In the next section, we introduce the LOFs,  derive  optimal projection and interpolation errors in weighted pseudo-derivatives which are adapted to the involved mapping. In Section 3, we introduce the GLOFs which involve an additional parameter so  are more flexible than LOFs, and derive the corresponding optimal projection and interpolation errors. 
In Section 4, we apply GLOFs to solve several typical classes of fractional differential equations, and derive optimal  error estimates which indicate, in particular, that for solutions and data functions having weak singularities at $t=0$ or one endpoint,  errors of the proposed GLOF-Galerkin methods will converge exponentially.
In each of the sections 2, 3 and 4, we also present numerical results to validate the theoretical estimates and to show the effectiveness of our new method. Some concluding remarks are given in the last section.

\section{Log orthogonal functions}
In this section, we introduce the log orthogonal functions, derive the corresponding approximation theory, and present numerical results to validate the theoretical estimates and to show their effectiveness.

To fix the idea, we consider the canonical  time interval $I=(0,1)$. Throughout the paper, we  use the mapping
\begin{equation}\label{mapping}
y(t):=-(\beta+1)\log{t}, \quad t\in I,
\end{equation}
 to map $I$ to $\mathbb{R}^+$.
We shall make use of  $\mathscr{L}^{(\alpha)}_n(y),~\alpha>-1$ which is the Laguerre polynomial of $y \in \mathbb{R}^+$, satisfying
\begin{equation}\label{GLagorth}
    \int_{0}^\infty \mathscr{L}_n^{(\alpha)}(y)\, \mathscr{L}_m^{(\alpha)}(y) \,y^\alpha e^{-y} \, {\rm d}y= \gamma _n^{(\alpha)} \delta_{mn},\quad \gamma _n^{(\alpha)}=\frac{\Gamma(n+\alpha+1)}{\Gamma(n+1)}.
\end{equation}
Some additional properties  of Laguerre polynomials  are listed in Appendix \ref{Appendix_Laguerre}, see also \cite{ShenTangWang2011,szego1975orthogonal}.

\subsection{Definition and properties}
\begin{defn}[LOFs]\label{def_LOF}{\em
Let $\alpha, \beta>-1$. We define the log  orthogonal functions by
\begin{equation}\label{log_basis}
\mathcal{S}^{(\alpha,\beta)}_n(t)=\mathscr{L}^{(\alpha)}_n(y(t))=\mathscr{L}^{(\alpha)}_n (-(\beta+1)\log{t}), \quad n=0,1,\ldots .
\end{equation}
}\end{defn}

From the properties of Laguerre polynomials listed in Appendix \ref{Appendix_Laguerre} and
 the following relations
 \begin{equation}\label{transform}
y=-(\beta+1)\log{t},\quad {\rm d}y=-(\beta+1)t^{-1}{\rm d} t,\quad \partial_t=-(\beta+1)t^{-1}\partial_y,
 \end{equation}
we can easily derive the  following lemma:
\begin{lmm}\label{lem:prop} The LOFs satisfy the following  properties:
\begin{itemize}
\item[\bf{P1}.]{\em Three-term recurrence relation:}
\begin{equation}\begin{aligned}\label{S_Three-term R}
&\mathcal{S}^{(\alpha,\beta)}_0(t)=1,\qquad \mathcal{S}_1^{(\alpha,\beta)}(t)=(\beta+1)\log{t}+\alpha+1,\\
&\mathcal{S}^{(\alpha,\beta)}_{n+1}(t)=\frac{2n+\alpha+1+(\beta+1)\log{t}}{n+1}\mathcal{S}^{(\alpha,\beta)}_n(t)-\frac{n+\alpha}{n+1}\mathcal{S}^{(\alpha,\beta)}_{n-1}(t).
\end{aligned}\end{equation}
\item[\bf{P2}.]{\em Derivative relations:}
\begin{equation}\label{S_Derivative}
(\beta+1)^{-1}{t}{\partial}_{t}\mathcal{S}_n^{(\alpha,\beta)}(t)= \mathcal{S}_{n-1}^{(\alpha+1,\beta)}(t)=\sum_{l=0}^{n-1}\mathcal{S}_{l}^{(\alpha,\beta)}(t), \quad n\geq 1.
\end{equation}

\item[\bf{P3}.]{\em Orthogonality:}
\begin{equation}\label{S_orth}
    \int_{0}^1 \mathcal{S}_n^{(\alpha,\beta)}(t)~  \mathcal{S}_m^{(\alpha,\beta)}(t)\,~ (-\log{t})^{\alpha} \,t^\beta \, {\rm d}t=\gamma^{(\alpha,\beta)}_n \delta_{mn},\end{equation}
    where $$ \gamma^{(\alpha,\beta)}_n:= \frac{\Gamma(n+\alpha+1)}{(\beta+1)^{\alpha+1}\Gamma(n+1)} .$$
\item[\bf{P4}.]{\em Sturm-Liouville problem:}
\begin{equation}\label{S_SLProb}
  {(-\log{t})^{-\alpha}}t^{-\beta}\partial_t\left( (-\log{t})^{\alpha+1}{t^{\beta+2}}\partial_t \mathcal{S}_n^{(\alpha,\beta)}(t)\right)+n(\beta+1) \mathcal{S}_n^{(\alpha,\beta)}(t)=0.
\end{equation}
\item[\bf{P5}.]{\em Gauss-LOFs quadrature:}\\
 Let $\{y_j^{(\alpha)},~\omega_j^{(\alpha)}\}_{j=0}^N$ be the  Gauss-node and -weight of $\mathscr{L}_{n+1}^{(\alpha)}(y)$. Denote
\begin{equation}\label{node_weight}
\big\{{t}_j^{(\alpha,\beta)}:=e^{-(\beta+1)^{-1}y_j^{(\alpha)}},\quad \chi^{(\alpha,\beta)}_j={(\beta+1)^{-\alpha-1}\omega^{(\alpha)}_j}\big\}_{j=0}^N.
\end{equation}
 Then,
   \begin{equation}\label{Gauss_Quad}
  \int_{0}^1 p(t) (-\log{t})^\alpha t^{\beta} {\rm d}t= \sum_{j=0}^N p({t}_j^{(\alpha,\beta)})\,{\chi}_j^{(\alpha,\beta)},\quad \forall \, p\in \mathcal{P}^{\log{t}}_{2N+1},
  \end{equation}
  where
  \begin{equation}\label{P_K^logt}
 \mathcal{P}^{\log{t}}_K:=span\{1,\log{t},~(\log{t})^2,\ldots,(\log{t})^K\}.
 \end{equation}
\end{itemize}
\end{lmm}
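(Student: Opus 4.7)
The plan is to prove all five properties by pulling them back to the corresponding well-known identities for the Laguerre polynomial $\mathscr{L}^{(\alpha)}_n(y)$ through the mapping $y=-(\beta+1)\log t$ and the chain-rule relations in \re{transform}. Since $\mathcal{S}^{(\alpha,\beta)}_n(t)=\mathscr{L}^{(\alpha)}_n(y(t))$, each property on the $t$-side will be obtained by substitution followed by algebraic bookkeeping; no deep argument is needed, just careful tracking of the mapping factors $(\beta+1)^{-1}t$ and the Jacobian.

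First, for \textbf{P1}, I would start from the classical three-term recurrence $(n+1)\mathscr{L}^{(\alpha)}_{n+1}(y)=(2n+\alpha+1-y)\mathscr{L}^{(\alpha)}_n(y)-(n+\alpha)\mathscr{L}^{(\alpha)}_{n-1}(y)$ recorded in Appendix \ref{Appendix_Laguerre} and substitute $y=-(\beta+1)\log t$, which turns $-y$ into $(\beta+1)\log t$ and yields \re{S_Three-term R} directly. For \textbf{P2}, applying $\partial_t=-(\beta+1)t^{-1}\partial_y$ to $\mathcal{S}^{(\alpha,\beta)}_n(t)$ and invoking the Laguerre derivative identity $\partial_y\mathscr{L}^{(\alpha)}_n(y)=-\mathscr{L}^{(\alpha+1)}_{n-1}(y)$ yields the first equality, and the identity $\mathscr{L}^{(\alpha+1)}_{n-1}(y)=\sum_{l=0}^{n-1}\mathscr{L}^{(\alpha)}_l(y)$ yields the second.

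For \textbf{P3}, I would perform the change of variable $y=-(\beta+1)\log t$ in \re{GLagorth}. Writing $(-\log t)^{\alpha}=(y/(\beta+1))^{\alpha}$, $t^{\beta}=e^{-\beta y/(\beta+1)}$ and ${\rm d}t=-(\beta+1)^{-1}e^{-y/(\beta+1)}{\rm d}y$, the three exponentials combine into $e^{-y}$ and the orthogonality constant is rescaled by $(\beta+1)^{-\alpha-1}$, producing \re{S_orth}. For \textbf{P5}, any $p\in\mathcal{P}^{\log t}_{2N+1}$ can be written as $p(t)=q(-(\beta+1)\log t)$ with $q\in\mathcal{P}_{2N+1}$, so the same change of variable converts the integral on $I$ to a Laguerre-weighted integral on $\mathbb{R}^+$; exactness of the standard Laguerre Gauss quadrature for $q\in\mathcal{P}_{2N+1}$ together with the definitions in \re{node_weight} then gives \re{Gauss_Quad}.

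The only slightly involved step is \textbf{P4}: starting from the Laguerre Sturm--Liouville equation $y^{-\alpha}e^{y}\partial_y\bigl(y^{\alpha+1}e^{-y}\partial_y\mathscr{L}^{(\alpha)}_n(y)\bigr)+n\,\mathscr{L}^{(\alpha)}_n(y)=0$, I would rewrite $\partial_y=-(\beta+1)^{-1}t\partial_t$, substitute $y=-(\beta+1)\log t$ so that $y^{\alpha}=(\beta+1)^{\alpha}(-\log t)^{\alpha}$ and $e^{-y}=t^{\beta+1}$, and then collect powers of $(\beta+1)$ and of $t$. The inner bracket becomes $(\beta+1)^{\alpha}(-\log t)^{\alpha+1}\bigl(-(\beta+1)^{-1}t\bigr)t^{\beta+1}\partial_t\mathcal{S}^{(\alpha,\beta)}_n(t)$, and applying the outer $\partial_y$ (again rewritten in $t$) together with the prefactor $y^{-\alpha}e^{y}$ produces the form in \re{S_SLProb}, with the constant $n$ upgraded to $n(\beta+1)$ from the final cancellation. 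The main obstacle throughout will simply be keeping track of the powers of $(\beta+1)$ and the signs coming from the monotone decreasing mapping $t\mapsto y$; no genuine analytical difficulty is expected.
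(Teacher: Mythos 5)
Your proposal is correct and follows essentially the same route as the paper: every property is pulled back to the corresponding Laguerre identity through the substitution $y=-(\beta+1)\log t$ together with $\partial_t=-(\beta+1)t^{-1}\partial_y$, with the bookkeeping of the powers of $(\beta+1)$ and signs (in particular the cancellation producing the factor $n(\beta+1)$ in \textbf{P4} and the rescaling $(\beta+1)^{-\alpha-1}$ in \textbf{P3} and \textbf{P5}) carried out exactly as in the paper's proof.
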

\begin{proof}
 The three-term recurrence relation \eqref{S_Three-term R} is a straightforward result from \eqref{Three-term R} with the variable transform  \eqref{transform}.

 \eqref{S_Derivative} can be obtained from the relations \eqref{LaguDerivative3} and \eqref{transform}. Indeed,
$$(\beta+1)^{-1}t\partial_t \{\mathcal{S}_n^{(\alpha,\beta)}(t)\}\overset{\eqref{transform}}{=}-\partial_y \mathscr{L}^{(\alpha)}_n(y)\overset{\eqref{LaguDerivative3}}{=}\mathscr{L}^{(\alpha+1)}_{n-1}(y)=\mathcal{S}_{n-1}^{(\alpha+1,\beta)}(t).$$

We derive from $y=-(\beta+1)\log{t}$ that
\begin{equation*}\begin{aligned}
\int_{0}^1 \mathcal{S}_n^{(\alpha,\beta)}(t)~\mathcal{S}_m^{(\alpha,\beta)}(t)\,(-\log{t})^{\alpha} t^{\beta}  \, {\rm d}t=\frac{1}{(\beta+1)^{\alpha+1}}\int_{0}^\infty \mathscr{L}_{n}^{(\alpha)}(y)~ \mathscr{L}_{m}^{(\alpha)}(y)\,y^{\alpha}e^{-y}  \, {\rm d}y.
\end{aligned}\end{equation*}
Hence, we have \eqref{S_orth}.

\eqref{S_SLProb} is valid since
 $$y=-(\beta+1)\log{t},\quad \partial_y=-(\beta+1)^{-1}t\,\partial_t,\quad \mathscr{L}_n^{(\alpha)}(y)=\mathcal{S}_n^{(\alpha,\beta)}(t)$$
 lead to
 $$
  y^{-\alpha}e^y\partial_y\left( y^{\alpha+1}e^{-y}\partial_y \mathscr{L}_n^{(\alpha)}(y)\right)=\frac{(-\log{t})^{-\alpha}t^{-\beta}}{\beta+1}\partial_t \left((-\log{t})^{\alpha+1}t^{\beta+2}\partial_t \mathcal{S}^{(\alpha,\beta)}_n\right).$$

Finally, setting $t=e^{-(\beta+1)^{-1}y}$, we can obtain \eqref{node_weight}
from the Laguerre-Gauss quadrature:
\begin{equation*}\begin{aligned} \int_{0}^1 p(t) (-\log{t})^\alpha t^\beta & {\rm d}t=\int_{\mathbb{R}^+} p(x(y)) \frac{y^\alpha e^{-y}}{(\beta+1)^{\alpha+1}} \,{\rm d}y\\&= \sum_{j=0}^N p(x(y_j^{(\alpha)}))\frac{\omega_j^{(\alpha)}}{(\beta+1)^{\alpha+1}}= \sum_{j=0}^N p({t}_j^{(\alpha,\beta)}){\chi}_j^{(\alpha,\beta)}.
\end{aligned}\end{equation*}
\end{proof}

\begin{rem}
We used two parameters $\alpha$ and $\beta$ to provide better flexibility, e.g., they allow us to effectively deal with problems with weight $(-\log t)^\alpha t^\beta$. In the applications considered in this paper, the $\log$ term does not appear so we can take $\alpha=0$. On the other hand,   taking $\beta=0$ offers good approximation properties for problems with weight $(-\log t)^\alpha$, which can not be well approximated by classical orthogonal polynomials.
 \end{rem}

\subsection{Projection estimate}
Let $\alpha,\beta>-1$, and $\chi^{\alpha,\beta}(t):=(-\log{t})^\alpha t^\beta$. For any
 $u\in L^2_{\chi^{\alpha,\beta}}({I})$, we denote
 $\pi_N^{\alpha,\beta} u$ the projection from $L^2_{\chi^{\alpha,\beta}}$ to $\mathcal{P}^{\log{t}}_N$
by
\begin{equation}\label{projection1}
(u-\pi_N^{\alpha,\beta} u, v)_{\chi^{\alpha,\beta}}=\int_{0}^1\{u-\pi_N^{\alpha,\beta} u\}(t) \,v(t) \,\chi^{\alpha,\beta}(t) {\rm d}t=0, \quad \forall v\in \mathcal{P}^{\log{t}}_N.
\end{equation}
Thanks to the orthogonality of the basis $\{\mathcal{S}^{(\alpha,\beta)}_n\}_{n=0}^\infty$, we have
\begin{equation}\label{projection1}
\pi_N^{\alpha,\beta} u=\sum_{n=0}^N \hat{u}^{\alpha,\beta}_n \,\mathcal{S}^{(\alpha,\beta)}_n,\quad \hat{u}^{\alpha,\beta}_n=(\gamma^{(\alpha,\beta)}_n)^{-1}\int_{0}^1 u(t)\, \mathcal{S}^{(\alpha,\beta)}_n(t) \,\chi^{\alpha,\beta} (t) {\rm d} t.
\end{equation}

To better describe the projection error $\pi_N^{\alpha,\beta} u$, we  define  a pseudo-derivative
\begin{equation}\label{pseudo_deri}
\widehat{\partial}_t u:= t\partial_t u
\end{equation}
 and a non-uniformly weighted Sobolev  space
\begin{equation}\label{HilbertSpace}
{A}^{k}_{\alpha,\beta}({I}):=\{v\in L^{2}_{\chi^{\alpha,\beta}}({I}):~{\widehat{\partial}_{t}}^{\,j} v\in  L^{2}_{\chi^{\alpha+j,\beta}}({I}),~j=1,2,\ldots,k\},\quad k\in \mathbb{N},
\end{equation}
equipped with semi-norm and norm
\begin{equation*}
|v|_{{A}^{m}_{\alpha,\beta}}:=\|\widehat{\partial}_{t}^{\,m}{v}\|_{\chi^{\alpha+m,\beta}},\quad \|v\|_{{A}^{m}_{\alpha,\beta}}:=\big(\sum_{k=0}^m |v|^2_{{A}^{k}_{\alpha,\beta}}\big)^{{1}/{2}}.
\end{equation*}

\begin{thm}\label{thm_Proj}
Let $m,\, N,\,k\in \mathbb{N}$ and $\alpha,\beta>-1$. For any $u\in{A}^{m}_{\alpha,\beta}({I})$ and $0\leq k\leq \widetilde{m}:=\min\{m,N+1\}$, we have
\begin{equation}
\|\widehat{\partial}_{t}^{\,k} (u-\pi_N^{\alpha,\beta} u)\|_{\chi^{\alpha+k,\beta}}\leq  \sqrt{(\beta+1)^{k-\widetilde{m}}\frac{(N-\widetilde{m}+1)!}{(N-{k}+1)!}}~ \|\widehat{\partial}_{t}^{\,\widetilde{m}}
 u\|_{\chi^{\alpha+\widetilde{m},\beta}}.
\end{equation}
 In particular, fixing $\alpha=\beta=k=0$ and $m<N+1$, it holds that
\begin{equation}\label{eq_L2}
\|u-\pi_N u\|\leq  c N^{-\frac{m}{2}}~ \|\widehat{\partial}_{t}^{\,{m}}
 u\|_{\chi^{{m}}},
\end{equation}
where $\pi_N=\pi_N^{0,0}$ and $\chi^{{m}}=\chi^{{m},0}=(-\log{t})^m$ for notational simplicity.
\end{thm}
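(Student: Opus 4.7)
The plan is to expand $u$ in the orthogonal basis $\{\mathcal{S}_n^{(\alpha,\beta)}\}$ and reduce both sides to weighted $\ell^2$ norms of the expansion coefficients using the derivative relation P2 and the orthogonality P3. Concretely, write $u = \sum_{n\ge 0} \hat{u}_n^{\alpha,\beta} \mathcal{S}_n^{(\alpha,\beta)}$, so that $u - \pi_N^{\alpha,\beta} u = \sum_{n\ge N+1} \hat{u}_n^{\alpha,\beta} \mathcal{S}_n^{(\alpha,\beta)}$. Iterating the identity $(\beta+1)^{-1}\widehat{\partial}_t \mathcal{S}_n^{(\alpha,\beta)} = \mathcal{S}_{n-1}^{(\alpha+1,\beta)}$ gives, for any integer $j\ge 0$,
\begin{equation*}
\widehat{\partial}_t^{\,j}\mathcal{S}_n^{(\alpha,\beta)} = (\beta+1)^{j}\,\mathcal{S}_{n-j}^{(\alpha+j,\beta)}, \qquad n \ge j,
\end{equation*}
so applying $\widehat{\partial}_t^{\,k}$ termwise and using orthogonality P3 of $\{\mathcal{S}_{m}^{(\alpha+k,\beta)}\}$ with respect to $\chi^{\alpha+k,\beta}$ yields
\begin{equation*}
\|\widehat{\partial}_t^{\,k}(u-\pi_N^{\alpha,\beta}u)\|_{\chi^{\alpha+k,\beta}}^2 = (\beta+1)^{2k}\sum_{n=N+1}^\infty |\hat{u}_n^{\alpha,\beta}|^{2}\,\gamma_{n-k}^{(\alpha+k,\beta)},
\end{equation*}
and similarly $\|\widehat{\partial}_t^{\,\widetilde m}u\|_{\chi^{\alpha+\widetilde m,\beta}}^{2} = (\beta+1)^{2\widetilde m}\sum_{n\ge \widetilde m}|\hat{u}_n^{\alpha,\beta}|^{2}\gamma_{n-\widetilde m}^{(\alpha+\widetilde m,\beta)}$.

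Next I would compute the coefficient-wise ratio explicitly. Plugging in $\gamma_j^{(\mu,\beta)} = \Gamma(j+\mu+1)/((\beta+1)^{\mu+1}\Gamma(j+1))$ causes the $\Gamma(n+\alpha+1)$ factor to cancel, leaving
\begin{equation*}
\frac{(\beta+1)^{2k}\gamma_{n-k}^{(\alpha+k,\beta)}}{(\beta+1)^{2\widetilde m}\gamma_{n-\widetilde m}^{(\alpha+\widetilde m,\beta)}} = (\beta+1)^{k-\widetilde m}\,\frac{\Gamma(n-\widetilde m+1)}{\Gamma(n-k+1)}.
\end{equation*}
Since $\widetilde m\ge k$, the factorial ratio $(n-\widetilde m)!/(n-k)!$ is monotonically decreasing in $n$, so for $n\ge N+1$ it is maximized at $n=N+1$, giving the constant $(N-\widetilde m+1)!/(N-k+1)!$. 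Factoring this supremum out of the tail sum and bounding the remaining sum by the full sum from $n=\widetilde m$ delivers the stated inequality after taking square roots.

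For the special case $\alpha=\beta=k=0$ with $m<N+1$, the constant becomes $\sqrt{(N-m+1)!/(N+1)!}$, which equals $[(N+1)N(N-1)\cdots(N-m+2)]^{-1/2}$, a product of $m$ factors each bounded below by $N-m+2$; hence it is bounded by $c\,N^{-m/2}$ for fixed $m$ as $N\to\infty$, yielding \eqref{eq_L2}. The main obstacle, such as it is, is purely bookkeeping: one must track the parameter shift $(\alpha,k)\mapsto(\alpha+j,\,n-j)$ carefully through P2 and P3 so that the $\Gamma$-function cancellation proceeds cleanly and the monotonicity argument for the factorial ratio is applied in the right direction.
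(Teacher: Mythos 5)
Your proposal is correct and follows essentially the same route as the paper's proof: expand $u$ in the LOF basis, use the iterated derivative relation \eqref{S_Derivative} together with orthogonality \eqref{S_orth} to convert both norms into weighted $\ell^2$ sums of the coefficients, and bound the coefficient-wise ratio $(\beta+1)^{k-\widetilde m}\Gamma(n-\widetilde m+1)/\Gamma(n-k+1)$ by its value at $n=N+1$ using monotonicity. The only cosmetic difference is in the special case $\alpha=\beta=k=0$, where the paper cites the Gamma-ratio estimate \eqref{Gammaratio} while you bound the factorial quotient by the elementary product argument; both give $cN^{-m/2}$.
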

\begin{proof}
 For any $u\in{A}^{m}_{\alpha,\beta}({I})$, via   relations \eqref{S_Derivative} and \eqref{pseudo_deri}, we have
  \begin{equation}\label{rela_d}
 \widehat{\partial}_{t}^{\,l} \mathcal{S}^{(\alpha,\beta)}_n(t)=(\beta+1)^l\mathcal{S}^{(\alpha+l,\beta)}_{n-l}(t), \quad l\leq n.
 \end{equation}
 Then, it can be easily detected from the orthogonality {\bf{P3}} that
$$u(t)=\sum_{n=0}^\infty \hat{u}^{\alpha,\beta}_n \mathcal{S}_n^{(\alpha,\beta)}(t),\quad \|\widehat{\partial}_{t}^{\,l} u\|^2_{\chi^{\alpha+l,\beta}}=\sum_{n=l}^\infty (\beta+1)^{2l}\gamma^{(\alpha+l,\beta)}_{n-l} |\hat{u}^{\alpha,\beta}_n|^2,\quad l\geq 1.$$
Therefore,
\begin{equation*}\begin{aligned}
\|\widehat{\partial}_{t}^{\,k} (u-\pi_N^{\alpha,\beta} u)\|^2_{\chi^{\alpha+k,\beta}}&=\sum_{n=N+1}^\infty (\beta+1)^{2k}\gamma^{(\alpha+k,\beta)}_{n-k} |\hat{u}^{\alpha,\beta}_n|^2\\&\leq\max\{\frac{\gamma^{(\alpha+k,\beta)}_{n-k}}{\gamma^{(\alpha+\widetilde{m},\beta)}_{n-\widetilde{m}}}\}\sum_{n=N+1}^\infty (\beta+1)^{2k}\gamma^{(\alpha+\widetilde{m},\beta)}_{n-\widetilde{m}} |\hat{u}^{\alpha,\beta}_n|^2\\
&\leq(\beta+1)^{2(k-\widetilde{m})}\frac{\gamma^{(\alpha+k,\beta)}_{N+1-k}}{\gamma^{(\alpha+\widetilde{m},\beta)}_{N+1-\widetilde{m}}}\|\widehat{\partial}_{t}^{\,\widetilde{m}} u\|^2_{\chi^{\alpha+\widetilde{m},\beta}}
\\&\leq(\beta+1)^{k-\widetilde{m}}\frac{(N-\widetilde{m}+1)!}{(N-{k}+1)!}\|\widehat{\partial}_{t}^{\,\widetilde{m}} u\|^2_{\chi^{\alpha+\widetilde{m},\beta}}.
\end{aligned}\end{equation*}
Finally the proof of the special case can be proved by the following useful result:
 for any constant   $a, b\in {\mathbb R},$   $n\in {\mathbb N}, $  $n+a>1$ and $n+b>1$ (see  \cite[Lemma 2.1]{ZhWX13}),
\begin{equation}\label{Gammaratio}
\frac{\Gamma(n+a)}{\Gamma(n+b)}\le \nu_n^{a,b} n^{a-b},
\end{equation}
where
\begin{equation}\label{ConstUpsilon}
\nu_n^{a,b}=\exp\Big(\frac{a-b}{2(n+b-1)}+\frac{1}{12(n+a-1)}+\frac{(a-b)^2}{n}\Big).
\end{equation}
\end{proof}
{
\begin{rem}
 The essential difference between approximations by LOFs  and traditional polynomials can be explained by  the estimate \eqref{eq_L2}. 
In fact, since $\widehat{\partial}_t t^{r}=r t^{r}$, it's easy to check that $\|\widehat{\partial}_t^m t^r\|_{\chi^m}<\infty$ for all $r\geq 0$ and any positive integer $m$. So the LOFs can  approximate a function whose singularity behave as $\sum_{i} c_i t^{r_i}$ with exponential convergence.
   
  On the contrary, the polynomial approximation error depends on the regularity defined by the usual derivative. Specifically, for the classical polynomial projection $\Pi_N$: $L^2\rightarrow P_N^t:=span\{1,t,\ldots,t^N\}$, it holds that
 $$\|u-\Pi_Nu\|\leq c N^{-m} \|\partial_t^m u\|.$$
  Hence, functions behaving as $\sum_{i} c_i t^{r_i}$ with several small $r_i>0$ cannot be well approximated  by polynomials.
   \qed
 \end{rem}
 
 }

\subsection{Interpolation estimate}
Let $ \{t^{(\alpha,\beta)}_j\}_{j=0}^N$ be the mapped Gauss points defined in \eqref{node_weight}.
We  define the mapped Lagrange functions
\begin{equation}\label{interpolation_2}
{l}_j\big(y(t)\big)=\frac{\prod\limits_{i\neq j} \big(y(t)-y(t^{(\alpha,\beta)}_i)\big)}{\prod\limits_{i\neq j}\big(y(t^{(\alpha,\beta)}_j)-y(t^{(\alpha,\beta)}_i)\big)}=\frac{\prod\limits_{i\neq j} \log (t^{(\alpha,\beta)}_i/t) }{\prod\limits_{i\neq j}\log (t^{(\alpha,\beta)}_i/t^{(\alpha,\beta)}_j)},
\end{equation}
 and the interpolation operator $\mathcal{I}^{\alpha,\beta}_N: C(I)\rightarrow P_N^{\log t}$ by
\begin{equation}\label{interpolation_1}
\mathcal{I}^{\alpha,\beta}_N v(t)=\sum_{j=0}^N {v(t^{(\alpha,\beta)}_j)} {l}_j\big({y}(t)\big).
\end{equation}
Obviously, we have $\mathcal{I}^{\alpha,\beta}_N v(t^{(\alpha,\beta)}_j)=v(t^{(\alpha,\beta)}_j)$, $j=0,1,\cdots,N$.

We first establish a  stability result.
\begin{thm}
For any $v\in C({I})\cap A^1_{\alpha,\beta}({I})$ and $\widehat{\partial}_t v\in L^{2}_{\chi^{\alpha,\beta}}({I})$, we have
\begin{equation}\label{stability}
\|\mathcal{I}^{\alpha,\beta}_N v\|_{\chi^{\alpha,\beta}}\leq c\sqrt{(\beta+1)^{\alpha}}\left(c^{\beta}_1{N^{-\frac{1}{2}}}\|\widehat{\partial}_t v\|_{\chi^{\alpha,\beta}}+c^{\beta}_2\sqrt{\log N}\|v\|_{A^1_{\alpha,\beta}}\right).
\end{equation}
where $c^{\beta}_1={(\beta+1)^{-\frac{1}{2}}},\quad c^{\beta}_2=2\sqrt{\max\{1,\beta+1\}}.$
\end{thm}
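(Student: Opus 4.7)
The plan is to transfer the problem via the mapping $y=-(\beta+1)\log t$ to an equivalent stability bound for the standard Laguerre-Gauss interpolant, and then prove the latter by combining the Gauss-exactness identity with a fundamental-theorem-of-calculus control of $\tilde v$ at the nodes.

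First I would exploit quadrature exactness. Since $(\mathcal{I}_N^{\alpha,\beta} v)^2\in\mathcal{P}_{2N}^{\log t}$ and \eqref{Gauss_Quad} is exact on $\mathcal{P}_{2N+1}^{\log t}$, and $\mathcal{I}_N^{\alpha,\beta} v$ agrees with $v$ at the mapped Gauss nodes,
\begin{equation*}
\|\mathcal{I}_N^{\alpha,\beta} v\|_{\chi^{\alpha,\beta}}^2=\sum_{j=0}^N |v(t_j^{(\alpha,\beta)})|^2\chi_j^{(\alpha,\beta)}=(\beta+1)^{-(\alpha+1)}\sum_{j=0}^N |\tilde v(y_j^{(\alpha)})|^2\omega_j^{(\alpha)},
\end{equation*}
where $\tilde v(y):=v(e^{-y/(\beta+1)})$ and the second equality uses \eqref{node_weight}. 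Writing $\omega^\alpha(y):=y^\alpha e^{-y}$, the same change of variables yields the norm identities
\begin{equation*}
\|\tilde v\|_{\omega^\alpha}^2=(\beta+1)^{\alpha+1}\|v\|_{\chi^{\alpha,\beta}}^2,\quad \|\partial_y\tilde v\|_{\omega^\alpha}^2=(\beta+1)^{\alpha-1}\|\widehat{\partial}_t v\|_{\chi^{\alpha,\beta}}^2,\quad \|\partial_y\tilde v\|_{\omega^{\alpha+1}}^2=(\beta+1)^{\alpha}\|\widehat{\partial}_t v\|_{\chi^{\alpha+1,\beta}}^2,
\end{equation*}
so the stated inequality reduces to a bound of the form
\begin{equation*}
\sum_{j=0}^N|\tilde v(y_j^{(\alpha)})|^2\omega_j^{(\alpha)}\leq C\Bigl(N^{-1}\|\partial_y\tilde v\|_{\omega^\alpha}^2+\log N\bigl(\|\tilde v\|_{\omega^\alpha}^2+\|\partial_y\tilde v\|_{\omega^{\alpha+1}}^2\bigr)\Bigr).
\end{equation*}

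To prove this discrete bound, for each node $y_j^{(\alpha)}$ I would select an interval $J_j$ of length comparable to the local Gauss spacing and exploit the identity $\tilde v(y_j^{(\alpha)})=\tilde v(y)+\int_y^{y_j^{(\alpha)}}\partial_s\tilde v(s)\,ds$ for $y\in J_j$. Squaring, applying Cauchy-Schwarz against the weight $s^\alpha e^{-s}$, and averaging in $y$ over $J_j$ against the same weight converts $|\tilde v(y_j^{(\alpha)})|^2\omega_j^{(\alpha)}$ into a pair of local integrals, one of $|\tilde v(s)|^2 s^\alpha e^{-s}$ and one of $|\partial_s\tilde v(s)|^2 s^\alpha e^{-s}$, multiplied respectively by $1$ and by the reciprocal factor $\omega_j^{(\alpha)}\int_{J_j}s^{-\alpha}e^{s}\,ds$. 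Summing over $j$ and invoking the classical Laguerre-Gauss node/weight asymptotics (with $\omega_j^{(\alpha)}\sim (y_j^{(\alpha)})^\alpha e^{-y_j^{(\alpha)}}$ times the local spacing, $y_N^{(\alpha)}\sim 4N$) produces the $N^{-1}$ factor in front of $\|\partial_y\tilde v\|_{\omega^\alpha}^2$; the shift of weight from $\omega^\alpha$ to $\omega^{\alpha+1}$ on $\partial_y\tilde v$ and the $\log N$ factor arise precisely when the reciprocal factor is controlled at the largest nodes, where $\int s^{-\alpha} e^s\,ds$ grows unboundedly.

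Translating the discrete bound back via the norm identities and carefully tracking the powers of $\beta+1$ gives $c_1^\beta=(\beta+1)^{-1/2}$ in front of the derivative term and $c_2^\beta=2\sqrt{\max\{1,\beta+1\}}$ in front of the full $A^1_{\alpha,\beta}$-norm (the maximum is needed because the norm identities scale differently in the two summands of $\|v\|_{A^1_{\alpha,\beta}}^2$). The hard part is the discrete-to-continuous passage: the $\sqrt{\log N}$ factor is sharp and sensitive to the nonuniform distribution of Laguerre-Gauss nodes, and the weighted Sobolev space $A^1_{\alpha,\beta}$, rather than plain $L^2_{\chi^{\alpha,\beta}}$, is exactly what is needed to absorb the pointwise values of $\tilde v$ at the outlying nodes.
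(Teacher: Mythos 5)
Your reduction step is exactly the paper's: the same change of variables $y=-(\beta+1)\log t$, the identification $\mathcal{I}^{\alpha,\beta}_N v=\mathcal{I}^{\alpha}_N\tilde v$ at the Laguerre--Gauss nodes, and the same three norm identities with the same powers of $\beta+1$ (these are precisely the computations the paper carries out to extract $c_1^\beta$ and $c_2^\beta$). Where you diverge is at the key inequality: the paper does \emph{not} prove the Laguerre--Gauss interpolation stability bound
$\|\mathcal{I}^{\alpha}_N \tilde{v}\|_{y^{\alpha}e^{-y}}\leq c\bigl(N^{-1/2}\sqrt{M^{\tilde v}_1}+2\sqrt{\log N}\sqrt{M^{\tilde v}_2}\bigr)$
from scratch --- it cites it verbatim as (3.12) of the reference \cite{ben2006generalized} --- whereas you propose to rederive it via Gauss exactness (turning $\|\mathcal{I}_N\tilde v\|^2$ into the discrete sum $\sum_j|\tilde v(y_j^{(\alpha)})|^2\omega_j^{(\alpha)}$, which is valid since $(\mathcal{I}_N\tilde v)^2$ has degree $2N$) followed by a fundamental-theorem-of-calculus argument on local intervals $J_j$. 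That is indeed the standard mechanism behind such estimates, and your explanation of why the $A^1$-norm and the weight shift $\omega^\alpha\to\omega^{\alpha+1}$ are needed is the right intuition. The cost of your route is that the technically hard part --- the precise Laguerre node/weight asymptotics across the three regimes (near the origin, the bulk, the turning point near $y\sim 4N$), and the exact emergence of the $N^{-1}$ and $\log N$ factors from summing the reciprocal quantities $\omega_j^{(\alpha)}\int_{J_j}s^{-\alpha}e^{s}\,ds$ --- is left as a plan rather than carried out, and this is exactly the content of the cited lemma. So your proposal is a correct strategy, but to be a complete proof it would need either those estimates worked out in detail or, as the paper does, an appeal to the known interpolation stability result; the gain of your route, if completed, would be a self-contained argument that makes visible where each term in \eqref{stability} comes from.
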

\begin{proof}
Let $t(y)=e^{-(\beta+1)^{-1}y}$ and  $\tilde{v}(y)=v(t(y))$. Via relations \eqref{interpolation_1} and \eqref{node_weight}, we have
\begin{equation*}
\mathcal{I}^{\alpha,\beta}_N v(t)=\mathcal{I}^{\alpha}_N \tilde{v}(y):=\sum_{j=0}^N {\tilde{v}(y^{(\alpha)}_j)} {l}_j({y}), \quad y\in\mathbb{R}^+.
\end{equation*}
Thanks to \cite[(3.12) with $\beta=1$]{ben2006generalized}, we have
$$\|\mathcal{I}^{\alpha}_N \tilde{v}\|_{y^{\alpha} e^{-x}}\leq c(N^{-\frac{1}{2}} ~\sqrt{M^{\tilde{v}}_1}+2\sqrt{\log{N}}~\sqrt{M^{\tilde{v}}_2}),$$
where
$$M^{\tilde{v}}_1=\int_{0}^\infty (\partial_y \tilde{v}(y))^2~y^{\alpha}e^{-y}~{\rm d} y,\quad M^{\tilde{v}}_2=\int_{0}^\infty  \left( \tilde{v}^2+y(\partial_y \tilde{v})^2\right)y^{\alpha}e^{-y}~{\rm d} y.$$
On the other hand, we have
\begin{equation*}\begin{aligned}
&\int_{0}^\infty (\partial_y \tilde{v}(y))^2~y^{\alpha}e^{-y}~{\rm d} y=\int_{0}^1 (\frac{t}{\beta+1}\partial_t {v}(t))^2~(-(\beta+1)\log{t})^{\alpha}t^{\beta+1}~\frac{\beta+1}{t}{\rm d} t\\
&=(\beta+1)^{\alpha-1}\int_{0}^1 (\widehat{\partial}_t {v}(t))^2~(-\log{t})^{\alpha}t^{\beta}{\rm d} t=(\beta+1)^{\alpha-1}\|\widehat{\partial}_t {v}\|^2_{\chi^{\alpha,\beta}}
\end{aligned}\end{equation*}
and
\begin{equation*}\begin{aligned}
\int_{0}^\infty  \left( \tilde{v}^2+y(\partial_y \tilde{v})^2\right)y^{\alpha}e^{-y}~{\rm d} y&=(\beta+1)^{\alpha+1}\int_{0}^1 \left(v^2+\frac{(-\log{t})}{\beta+1}(\widehat{\partial}_t v)^2\right)~(-\log{t})^{\alpha}t^{\beta}{\rm d} t\\
&\leq(\beta+1)^{\alpha}\max\{1,(\beta+1)\}\|{v}\|^2_{A^1_{\alpha,\beta}}.
\end{aligned}\end{equation*}
We can then  derive  \eqref{stability} by combing  the above relations.
\end{proof}

With the above stability result in hand, we can now establish an  estimate for the interpolation error.

\begin{thm}\label{thm_Interpolation}
Let $m$ and $N$ be positive integers, and  $\alpha,\beta>-1$. For any $v\in C({I})\cap A^m_{{\alpha,\beta}}({I})$ and $ \widehat{\partial}_{t}v \in A^{m-1}_{{\alpha,\beta}}({I})$, we have
\begin{equation}\label{I_thm_1}
\|\mathcal{I}^{\alpha,\beta}_N v-v\|_{\chi^{\alpha,\beta}}\leq c\sqrt{\frac{(N+1-\widetilde{m})!}{(\beta+1)^{\widetilde{m}-\alpha}N!}}\left\{c^{\beta}_1\|\widehat{\partial}_{t}^{\,\widetilde{m}}{v}\|_{\chi^{\alpha+m-1,\beta}}+c^{\beta}_2\sqrt{\log N} \|\widehat{\partial}_{t}^{\,\widetilde{m}}{v}\|_{\chi^{\alpha+m,\beta}}\right\},
\end{equation}
where $c^{\beta}_1={(\beta+1)^{-\frac{1}{2}}},\quad c^{\beta}_2=2\sqrt{\max\{1,\beta+1\}}$ and   $\widetilde{m}=\min\{m,N+1\}$.
\end{thm}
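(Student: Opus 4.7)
The plan is to derive the interpolation bound from the stability estimate \eqref{stability} combined with the projection estimate of Theorem \ref{thm_Proj}, via the classical ``add-and-subtract $\pi_N^{\alpha,\beta} v$'' trick. Since $\pi_N^{\alpha,\beta} v\in\mathcal{P}_N^{\log t}$ is fixed by $\mathcal{I}_N^{\alpha,\beta}$, setting $w:=v-\pi_N^{\alpha,\beta} v$ gives
\begin{equation*}
v-\mathcal{I}_N^{\alpha,\beta} v \;=\; w-\mathcal{I}_N^{\alpha,\beta} w,
\end{equation*}
so that $\|v-\mathcal{I}_N^{\alpha,\beta} v\|_{\chi^{\alpha,\beta}}\le\|w\|_{\chi^{\alpha,\beta}}+\|\mathcal{I}_N^{\alpha,\beta} w\|_{\chi^{\alpha,\beta}}$. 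Applying \eqref{stability} to $w$ (which inherits the regularity of $v$) yields
\begin{equation*}
\|\mathcal{I}_N^{\alpha,\beta} w\|_{\chi^{\alpha,\beta}}\;\le\; c\sqrt{(\beta+1)^{\alpha}}\Bigl(c^{\beta}_1 N^{-1/2}\|\widehat{\partial}_t w\|_{\chi^{\alpha,\beta}}+c^{\beta}_2\sqrt{\log N}\,\|w\|_{A^1_{\alpha,\beta}}\Bigr).
\end{equation*}

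The ``easy'' pieces are $\|w\|_{\chi^{\alpha,\beta}}$ and the semi-norm part $\|\widehat{\partial}_t w\|_{\chi^{\alpha+1,\beta}}$ of $\|w\|_{A^1_{\alpha,\beta}}$; these are exactly the cases $k=0$, $\widetilde m=m$ and $k=1$, $\widetilde m=m$ of Theorem \ref{thm_Proj}. Both produce a bound proportional to $\sqrt{(\beta+1)^{-\widetilde m}(N-\widetilde m+1)!/N!}\,\|\widehat{\partial}_t^{\,\widetilde m} v\|_{\chi^{\alpha+m,\beta}}$, which combines with the $c^{\beta}_2\sqrt{\log N}$ factor to give the second summand on the right-hand side of \eqref{I_thm_1}.

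The delicate term, and hence the main obstacle, is $N^{-1/2}\|\widehat{\partial}_t w\|_{\chi^{\alpha,\beta}}$: it does \emph{not} fit Theorem \ref{thm_Proj} directly, because the weight has index $\alpha$ rather than the $\alpha+1$ produced by differentiating the orthogonal expansion once. To handle it I would use \eqref{rela_d} to write $\widehat{\partial}_t w=\sum_{n>N}(\beta+1)\hat v_n^{\alpha,\beta}\mathcal{S}_{n-1}^{(\alpha+1,\beta)}$, switch to the Laguerre variable $y=-(\beta+1)\log t$, and apply the Sonine contiguous identity $\mathscr{L}_n^{(\alpha)}(y)=\mathscr{L}_n^{(\alpha+1)}(y)-\mathscr{L}_{n-1}^{(\alpha+1)}(y)$ (equivalently the ratio $\gamma_n^{(\alpha+1,\beta)}/\gamma_n^{(\alpha,\beta)}=(n+\alpha+1)/(\beta+1)$) to reshuffle the tail into an $L^2_{\chi^{\alpha,\beta}}$-norm in which orthogonality is recovered. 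This yields a bound proportional to $\sqrt{(\beta+1)^{1-m}(N-m+1)!/N!}\,\|\widehat{\partial}_t^{\,m} v\|_{\chi^{\alpha+m-1,\beta}}$, which after multiplication by $\sqrt{(\beta+1)^{\alpha}}\,c^{\beta}_1 N^{-1/2}$ is precisely the first summand in \eqref{I_thm_1}. Folding the remaining $\Gamma$-ratios into the common prefactor $\sqrt{(N+1-\widetilde m)!/[(\beta+1)^{\widetilde m-\alpha} N!]}$ finishes the proof.
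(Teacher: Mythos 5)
Your proposal follows essentially the same route as the paper: triangle inequality through $\pi_N^{\alpha,\beta}v$, the stability bound \eqref{stability} applied to $v-\pi_N^{\alpha,\beta}v$, Theorem \ref{thm_Proj} for the easy pieces, and for the delicate term $\|\widehat{\partial}_t(v-\pi_N^{\alpha,\beta}v)\|_{\chi^{\alpha,\beta}}$ the same key idea of re-expanding via $\mathcal{S}_{n-1}^{(\alpha+1,\beta)}=\sum_{l=0}^{n-1}\mathcal{S}_l^{(\alpha,\beta)}$ to recover orthogonality in the weight $\chi^{\alpha,\beta}$ (the paper phrases this as comparing $\pi_N^{\alpha,\beta}\{\widehat{\partial}_tv\}$ with $\widehat{\partial}_t\{\pi_N^{\alpha,\beta}v\}$, but the computation is identical to your reshuffle).

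One step is under-specified, and your stated numerology for it is not quite right. After the reshuffle, the modes $k\le N$ all carry the \emph{same} coefficient $(\beta+1)\sum_{n>N}\hat v_n^{\alpha,\beta}$, so they contribute $\bigl(\sum_{k=0}^N\gamma_k^{(\alpha,\beta)}/\gamma_N^{(\alpha,\beta)}\bigr)\,\gamma_N^{(\alpha,\beta)}\bigl((\beta+1)\sum_{n>N}\hat v_n^{\alpha,\beta}\bigr)^2$; this is controlled by $s_N:=\sum_{k=0}^N\gamma_k^{(\alpha,\beta)}/\gamma_N^{(\alpha,\beta)}\le cN$ (for $-1<\alpha<0$ this itself needs a Stirling argument) times the square of the projection error of $\widehat{\partial}_tv$. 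Hence $\|\widehat{\partial}_t(v-\pi_N^{\alpha,\beta}v)\|_{\chi^{\alpha,\beta}}$ is bounded by $\sqrt{N}$ times the quantity you claim, not by that quantity itself, and it is exactly the factor $N^{-1/2}$ in the stability estimate that absorbs this $\sqrt{N}$ to produce the first summand of \eqref{I_thm_1}. The conclusion survives, but you should make the $s_N\le cN$ estimate explicit and correct the bookkeeping of where $N^{-1/2}$ is consumed.
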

\begin{proof} By the triangle inequality, we have
\begin{equation}\begin{aligned}\label{a0}
\|\mathcal{I}^{\alpha,\beta}_N v-v\|_{\chi^{\alpha,\beta}}\leq \|\mathcal{I}^{\alpha,\beta}_N v-\pi^{\alpha,\beta}_Nv\|_{\chi^{\alpha,\beta}}+\|\pi^{\alpha,\beta}_Nv-v\|_{\chi^{\alpha,\beta}}.
\end{aligned}\end{equation}
We only need to estimate the first term since the estimate for the  second term is already available in Theorem \ref{thm_Proj}.
Thanks to  \eqref{stability},
\begin{equation}\begin{aligned}\label{a1}
 \|\mathcal{I}^{\alpha,\beta}_N v-&\pi^{\alpha,\beta}_Nv\|_{\chi^{\alpha,\beta}}= \|\mathcal{I}^{\alpha,\beta}_N (v-\pi^{\alpha,\beta}_Nv)\|_{\chi^{\alpha,\beta}}
 \\&\leq c\left(c^{\beta}_1N^{-\frac{1}{2}}\|\widehat{\partial}_t (v-\pi^{\alpha,\beta}_Nv)\|_{\chi^{\alpha,\beta}}+c^{\beta}_2\sqrt{\log{N}}\|v-\pi^{\alpha,\beta}_Nv\|_{A^1_{\alpha,\beta}}\right).
\end{aligned}\end{equation}
The term $\|v-\pi^{\alpha,\beta}_Nv\|_{A^\mu_{\alpha,\beta}}~(\mu=0,1)$ can be estimated through Theorem \ref{thm_Proj}. For the first term in the last inequality, we have
\begin{equation}\label{a2}
\|\widehat{\partial}_t (v-\pi^{\alpha,\beta}_Nv)\|_{\chi^{\alpha,\beta}}\leq \|\widehat{\partial}_t v-\pi^{\alpha,\beta}_N\{\widehat{\partial}_tv\}\|_{\chi^{\alpha,\beta}}+\|\pi^{\alpha,\beta}_N\{\widehat{\partial}_tv\}-\widehat{\partial}_t \{\pi^{\alpha,\beta}_Nv\})\|_{\chi^{\alpha,\beta}}.
\end{equation}
We now follow a classical procedure as in \cite{bernardi1997spectral,ben1998spectral} to derive the desired estimate. The starting point is the relation
\begin{equation*}
\widehat{\partial}_t v=\sum_{n=0}^\infty \hat{v}^{\alpha,\beta}_n \widehat{\partial}_t \mathcal{S}^{(\alpha,\beta)}_n\overset{\eqref{S_Derivative}}{=}\sum_{n=1}^\infty \hat{v}^{\alpha,\beta}_n \left((\beta+1)\sum_{l=0}^{n-1} \mathcal{S}^{(\alpha,\beta)}_l\right){=}\sum_{l=0}^\infty \left((\beta+1)\sum_{n=l+1}^{\infty}\hat{v}^{\alpha,\beta}_n \right) \mathcal{S}^{(\alpha,\beta)}_l.
\end{equation*}
The above equation implies that
\begin{equation*}
\pi^{\alpha,\beta}_N\{\widehat{\partial}_t v\}(t)=\sum_{n=0}^N \hat{v}^{\alpha,\beta}_{1,n} \mathcal{S}^{(\alpha,\beta)}_n(t),\quad \hat{v}^{\alpha,\beta}_{1,n}:=(\beta+1)\sum_{k=n+1}^{\infty}\hat{v}^{\alpha,\beta}_k.
\end{equation*}
Similarly, we have
\begin{equation*}
\widehat{\partial}_t\{\pi^{\alpha,\beta}_N v\}(t)=\sum_{n=0}^{N-1} \left((\beta+1)\sum_{k=n+1}^{N}\hat{v}^{\alpha,\beta}_k \right) \mathcal{S}^{(\alpha,\beta)}_n(t)=\sum_{n=0}^{N-1}(\hat{v}^{\alpha,\beta}_{1,n}-\hat{v}^{\alpha,\beta}_{1,N})\mathcal{S}^{(\alpha,\beta)}_n(t).
\end{equation*}
Hence,
\begin{equation}\begin{aligned}
\|\pi^{\alpha,\beta}_N\{\widehat{\partial}_tv\}-\widehat{\partial}_t \{\pi^{\alpha,\beta}_Nv\})\|^2_{\chi^{\alpha,\beta}}&\overset{\eqref{S_orth}}{=}\sum_{n=0}^N \gamma^{(\alpha,\beta)}_n(\hat{v}^{\alpha,\beta}_{1,N})^2=\gamma^{(\alpha,\beta)}_N(\hat{v}^{\alpha,\beta}_{1,N})^2\sum_{n=0}^N \gamma^{(\alpha,\beta)}_n(\gamma^{(\alpha,\beta)}_N)^{-1}
\\&\leq \|\widehat{\partial}_t v-\pi^{\alpha,\beta}_{N-1}\{\widehat{\partial}_tv\}\|_{\chi^{\alpha,\beta}}\sum_{n=0}^N \gamma^{(\alpha,\beta)}_n(\gamma^{(\alpha,\beta)}_N)^{-1}\label{a3}.
\end{aligned}\end{equation}
It remains  to estimate $s_N:=\sum_{n=0}^N \gamma^{(\alpha,\beta)}_n(\gamma^{(\alpha,\beta)}_N)^{-1}$. For any $\alpha\geq 0$, in view of the expression of $\gamma^{(\alpha,\beta)}_n$, it's obvious that $s_N\leq N+1$. For $-1<\alpha<0$, we use Stirling formula to deduce that for a large integer $M\leq k\leq N$,
$$\frac{\gamma^{(\alpha,\beta)}_k}{\gamma^{(\alpha,\beta)}_N}=\dfrac{\Gamma(N+1)\Gamma(k+\alpha+1)}{\Gamma(N+\alpha+1)\Gamma(k+1)}\sim N^{-\alpha}k^{\alpha}.$$
Therefore, there exists a constant $c_M$ such that
\begin{equation}\label{a4}
s_N=\sum_{n=0}^N \gamma^{(\alpha,\beta)}_n(\gamma^{(\alpha,\beta)}_N)^{-1}\leq N^{-\alpha}(c_M+c\sum_{k=M}^N k^{\alpha})\leq cN.
\end{equation}
Finally, combing \eqref{a0}-\eqref{a4}  and Theorem \ref{thm_Proj} leads to \eqref{I_thm_1}.
\end{proof}

\begin{rem}\label{Coro_Gauss_estimate} Let $\{{t}_j^{(\alpha,\beta)}\}_{j=0}^N$ and $\{{\omega}_j^{(\alpha,\beta)}\}_{j=0}^N$ be the same as \eqref{node_weight}. Then, we have the following estimate for the quadrature error:
\begin{equation}\label{I_rem}
 \big|\int_{0}^1 v(t) \chi^{\alpha,\beta}(t) {\rm d}t-\sum_{j=0}^N v({t}_j^{(\alpha,\beta)})\,{\omega}_j^{(\alpha,\beta)}\big|\leq \frac{\Gamma(\alpha+1)}{(\beta+1)^{\alpha+1}}~\|\mathcal{I}^{\alpha,\beta}_N v-v\|_{\chi^{\alpha,\beta}}.
\end{equation}
Indeed, the above estimate can be derived from
$$\sum_{j=0}^N v({t}_j^{(\alpha,\beta)})\,{\omega}_j^{(\alpha,\beta)}=\int_0^1 \mathcal{I}^{\alpha,\beta}_N v(t) \chi^{\alpha,\beta}(t){\rm d}t,$$
and
$$\int_0^1(-\log t)^\alpha  t^{\beta}{\rm d} t=\int_0^\infty y^\alpha e^{-{(\beta+1)}y}{\rm d} y=\frac{\Gamma(\alpha+1)}{(\beta+1)^{\alpha+1}}.$$ \qed
\end{rem}

\begin{figure}[htp!]
\begin{minipage}{0.495\linewidth}
\begin{center}
\includegraphics[scale=0.4]{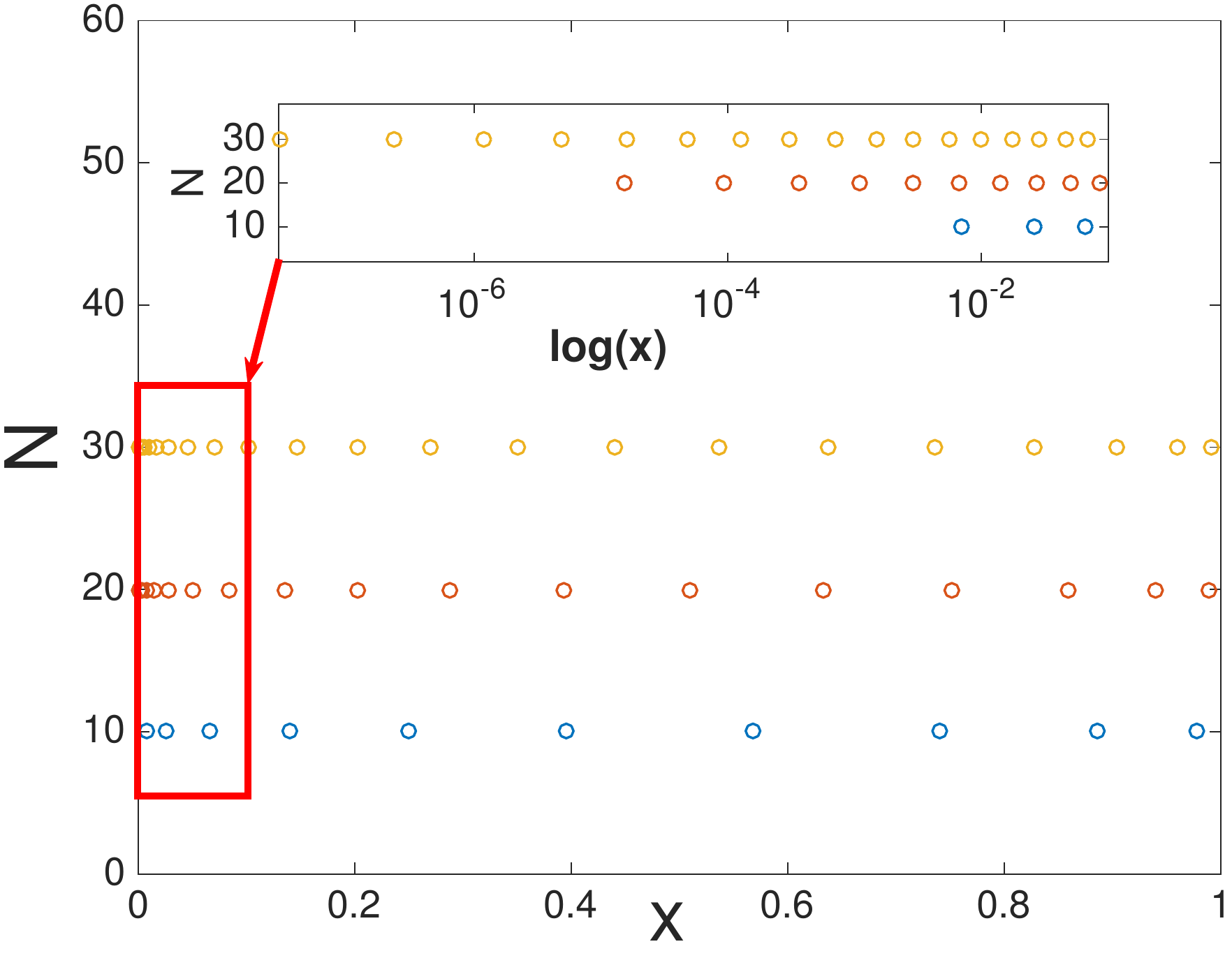}
\end{center}
\end{minipage}
\begin{minipage}{0.495\linewidth}
\begin{center}
\includegraphics[scale=0.4]{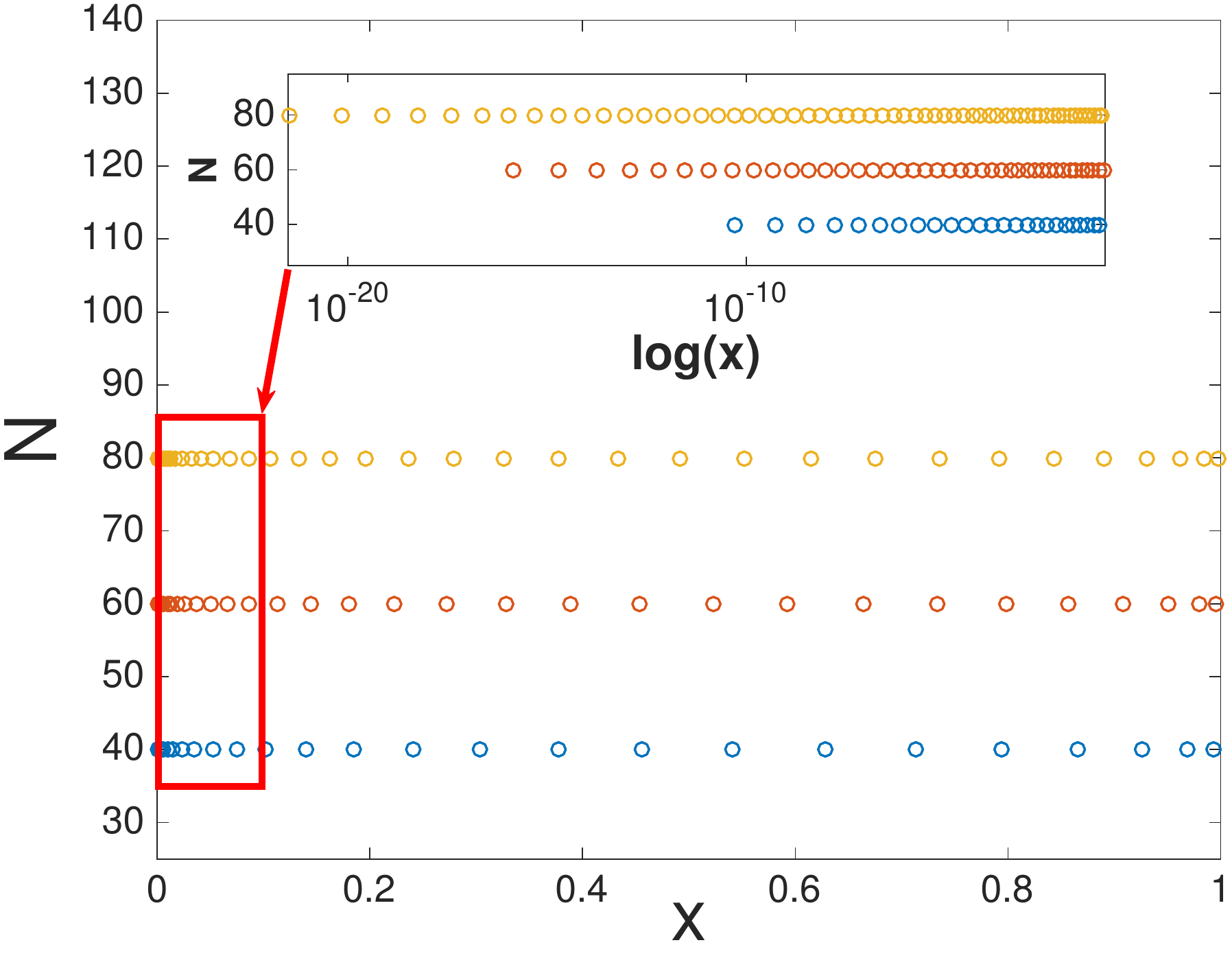}
\end{center}
\end{minipage}
\caption{Nodes distribution of  $\mathcal{S}^{(\alpha,\beta)}_n(t)$: $\alpha=0,~\beta=5$ with different $N$. }\label{graph_nodes1}
\end{figure}

 \begin{figure}[htp!]
\begin{minipage}{0.495\linewidth}
\begin{center}
\includegraphics[scale=0.4]{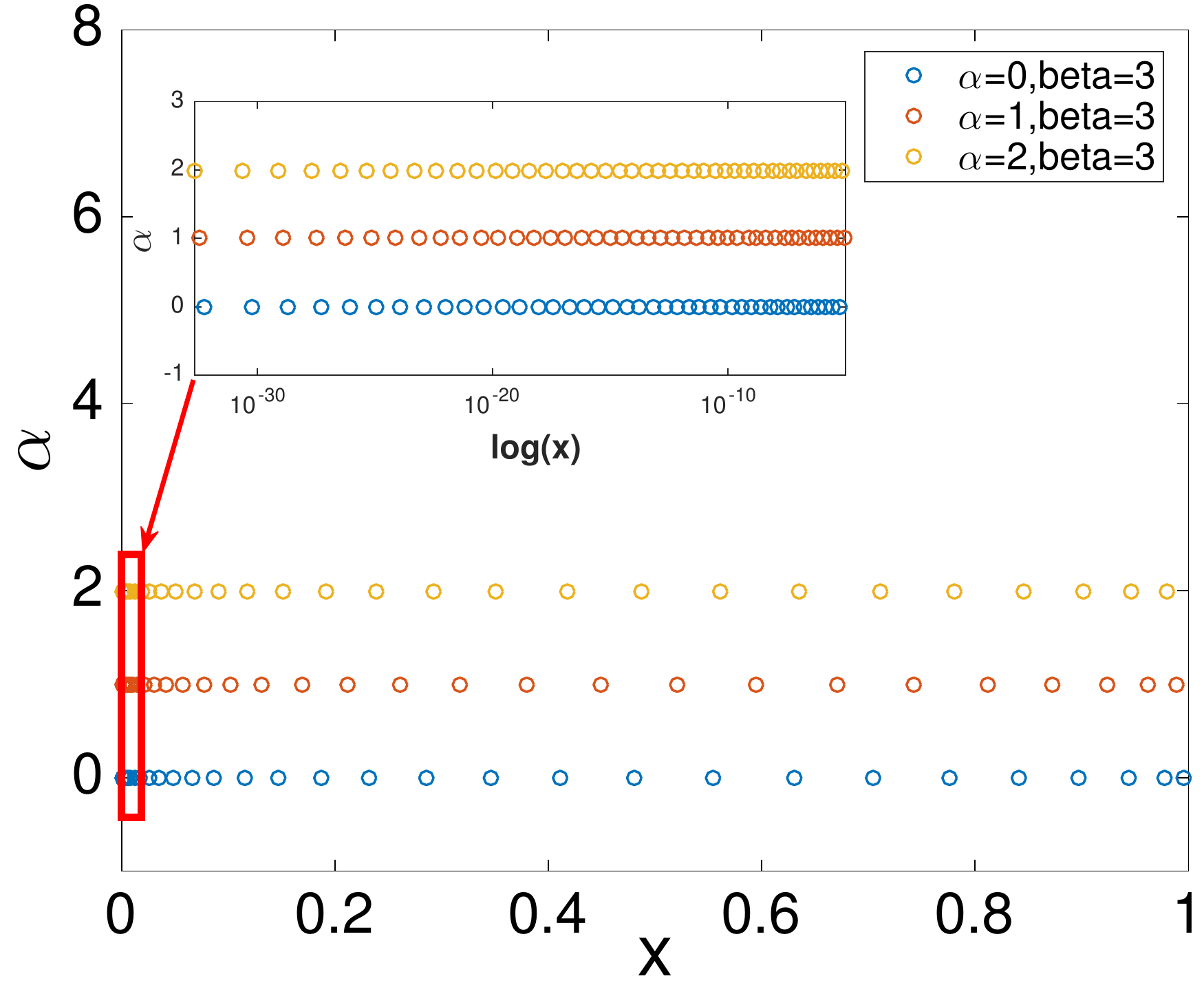}
\end{center}
\end{minipage}
\begin{minipage}{0.495\linewidth}
\begin{center}
\includegraphics[scale=0.4]{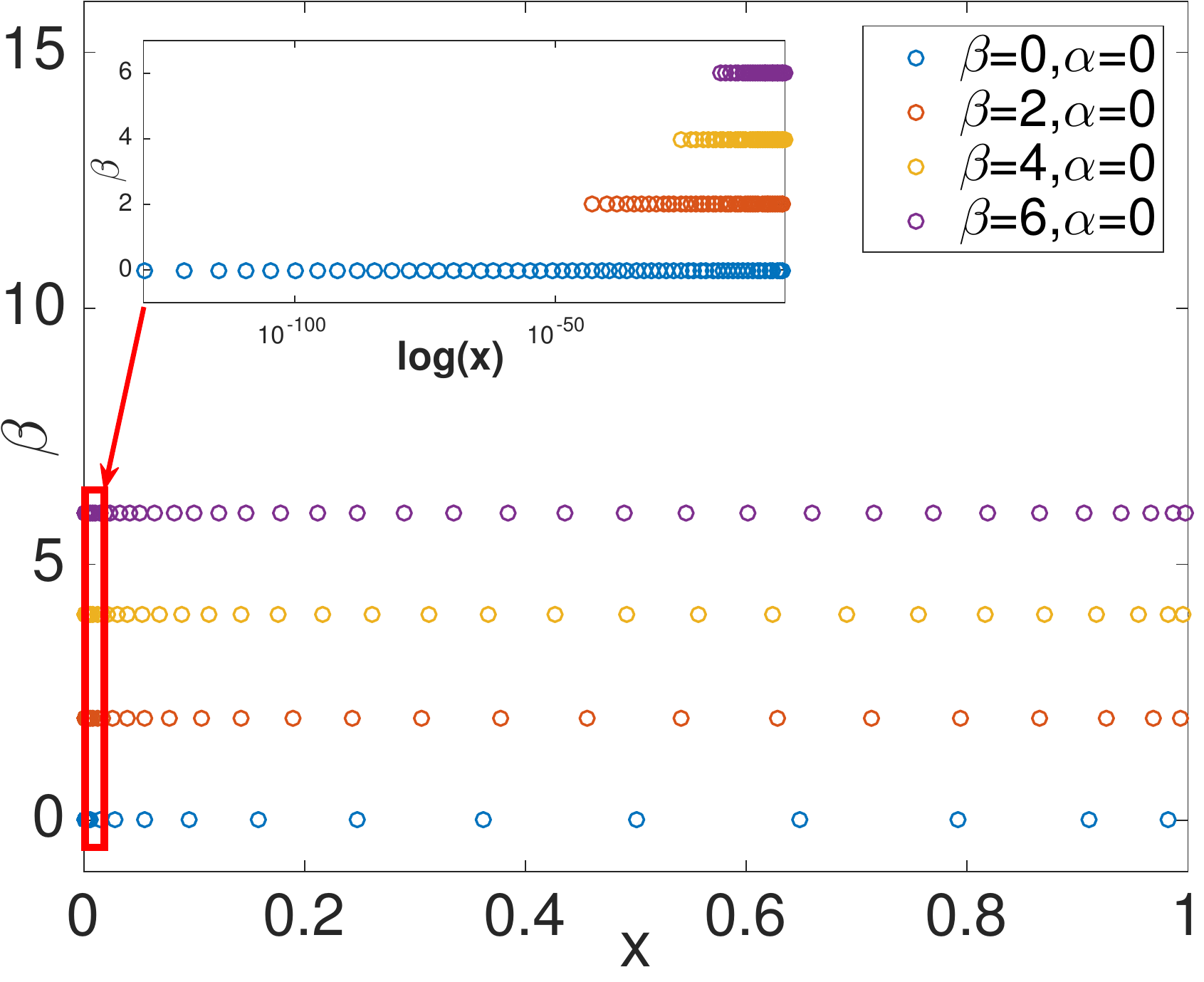}
\end{center}
\end{minipage}
\caption{  Nodes distribution of  $\mathcal{S}^{(\alpha,\beta)}_n(t)$: $N=80$ with different $\alpha,\,\beta$. }\label{graph_nodes2}
\end{figure}

To understand better why the singular function $t^s,~s>0$ can be well approximated by LOFs, we plot distribution of the Gauss-LOFs quadrature nodes  $\{t_j^{(\alpha,\beta)}\}_{j=0}^N$ with various $N$ and $\alpha,~\beta$ in Fig. \ref{graph_nodes1} and  \ref{graph_nodes2}. We observe from Fig. \ref{graph_nodes1} that
the nodes cluster near zero,  with significant more points near zero than the usual Gauss-Radau points. Fig. \ref{graph_nodes2}  exhibits the influence of the parameters $(\alpha,\beta)$ on the nodes distribution.
In particular, as $\alpha$ increases with $\beta$ fixed, the quadrature nodes move towards zero; on the other hand, as $\beta$ increases with $\alpha$ fixed, the quadrature nodes move away from zero.

\subsection{Numerical examples}

\begin{figure}[htp!]
\begin{minipage}{0.495\linewidth}
\begin{center}
\includegraphics[scale=0.4]{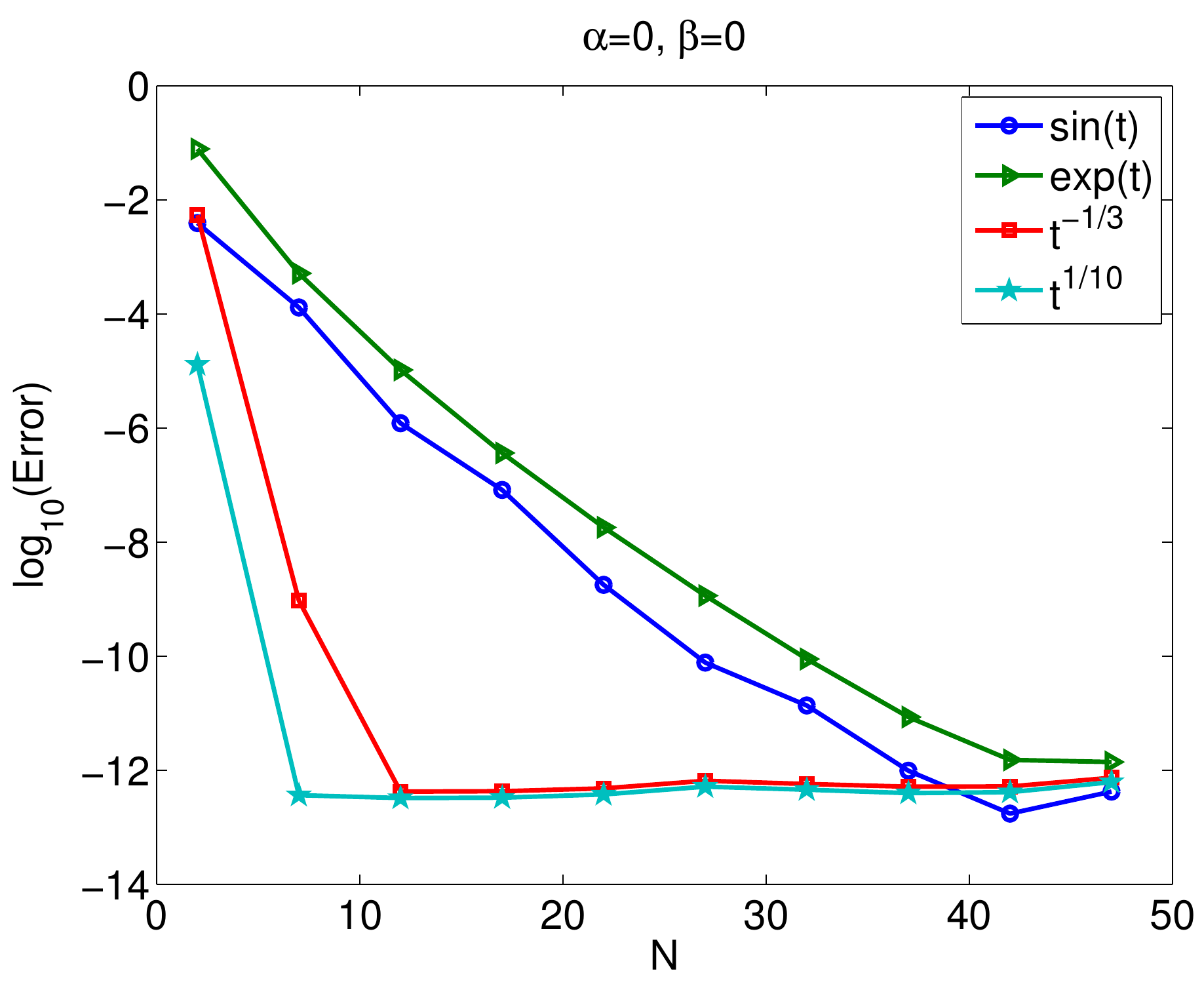}
\end{center}
\end{minipage}
\begin{minipage}{0.495\linewidth}
\begin{center}
\includegraphics[scale=0.4]{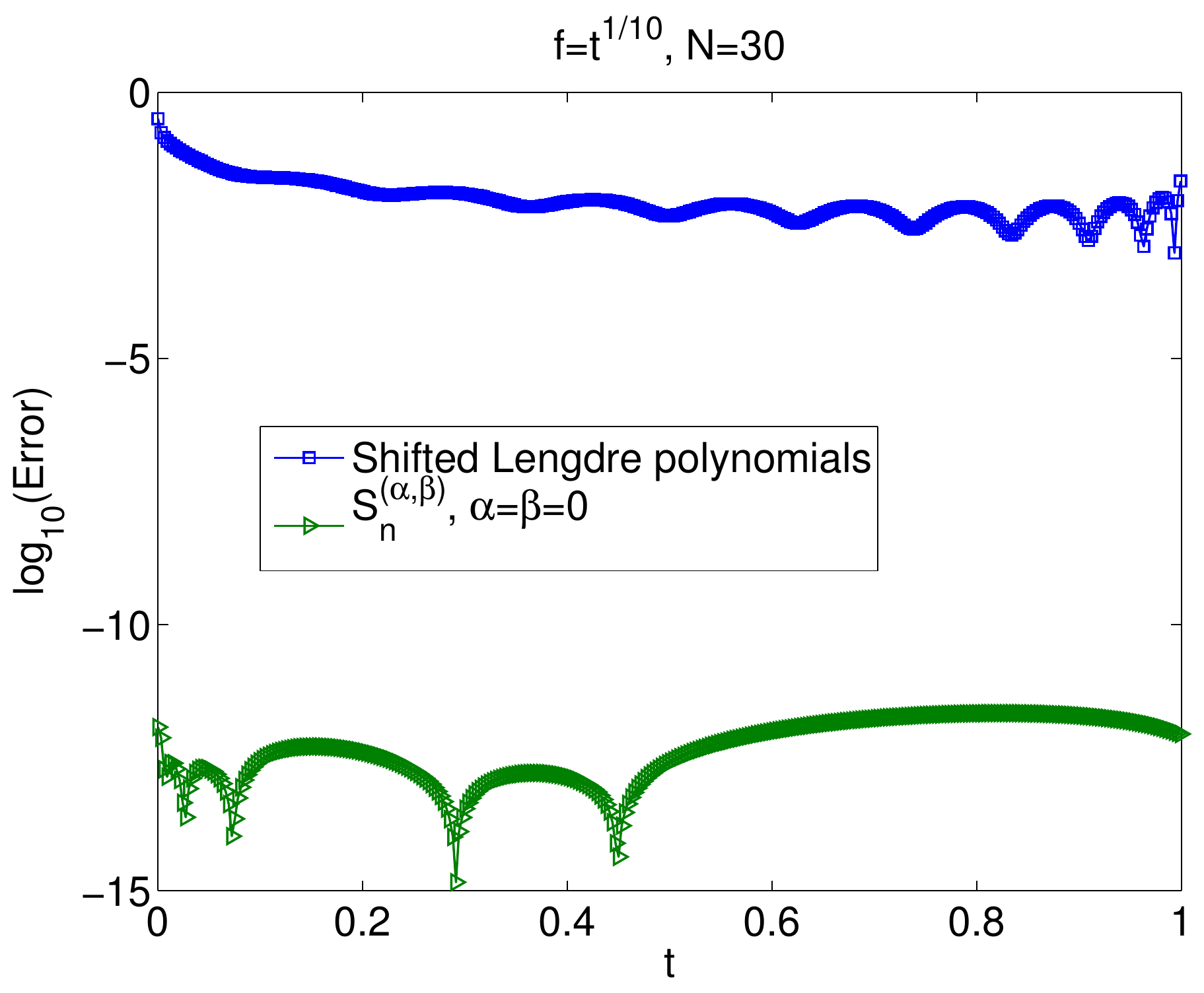}
\end{center}
\end{minipage}
\caption{ Left: Gauss-LOFs  quadrature errors.  Right: projection errors.}\label{Log_quad_expan}
\end{figure}
We first demonstrate the accuracy of Gauss-LOFs quadrature for computing
\begin{equation}\label{integral_GQ}
\int_0^1 f(t)(-\log{t})^\alpha t^\beta {\rm d}t,\quad \alpha, \beta>-1,
\end{equation}
with  the following functions
 $f(t)=\sin{t},~e^{t},~t^{-1/3}$ and $t^{1/10}$, respectively.
The quadrature errors are shown in the left of Fig. \ref{Log_quad_expan}.
We observe that the errors decay exponentially in all cases. We note that
 $f(t)=t^{-1/3}$ is singular and  can not be computed efficiently by the classical Gauss quadrature. However, $t^{-1/3}$ is smooth with the norm defined  through the pseudo-derivative \eqref{pseudo_deri},  so we  achieve exponential convergence for this case as well.

Next, we compute the projection error for  $f(t)=t^{1/10}$ which is not smooth in the usual Sobolev space, but it is smooth with the norm defined  through the pseudo-derivative.
 In the right of Fig. \ref{Log_quad_expan}, we plot the projection errors by using  the shifted Legendre polynomial $L_n(2t-1), ~t\in {I}$ and LOFs for function $f(t)=t^{1/10}$ with the fixed degree of basis $N=30$. We observe that the projection error by using LOFs is uniformly small across the  interval [0,1], while the error by using the shifted Legendre polynomial
 is very large.

\section{Generalized Log orthogonal functions}
The LOFs introduced in the last section is capable of resolving certain type
of singularities at $t=0$, but
 LOFs $\mathcal{S}^{(\alpha,\beta)}_n(t)$ consist of $\{(-\log{t})^k\}_{k=0}^{n}$, so  grow very fast near $t=0$ (cf. Fig. \ref{Value_orth1}). This behavior may severely  affects the accuracy in many situations.
 In addition,   derivatives of LOFs involve  the singular term $t^{-1}$, so they are not suitable to serve  as  basis functions  to represent solutions of PDEs or fractional PDEs.

\begin{figure}[htp!]
\begin{minipage}{0.495\linewidth}
\begin{center}
\includegraphics[scale=0.4]{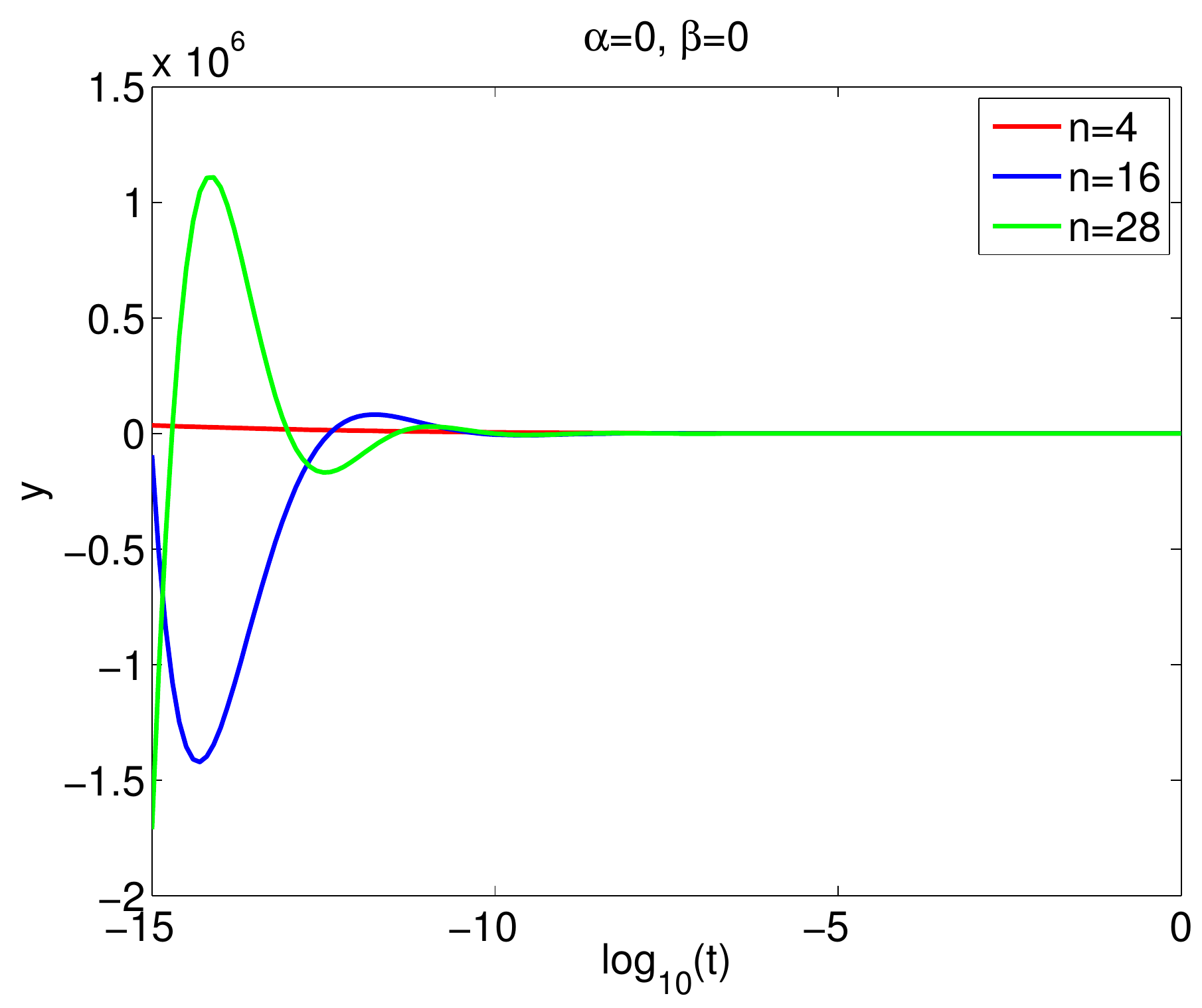}
\end{center}
\end{minipage}
\begin{minipage}{0.495\linewidth}
\begin{center}
\includegraphics[scale=0.4]{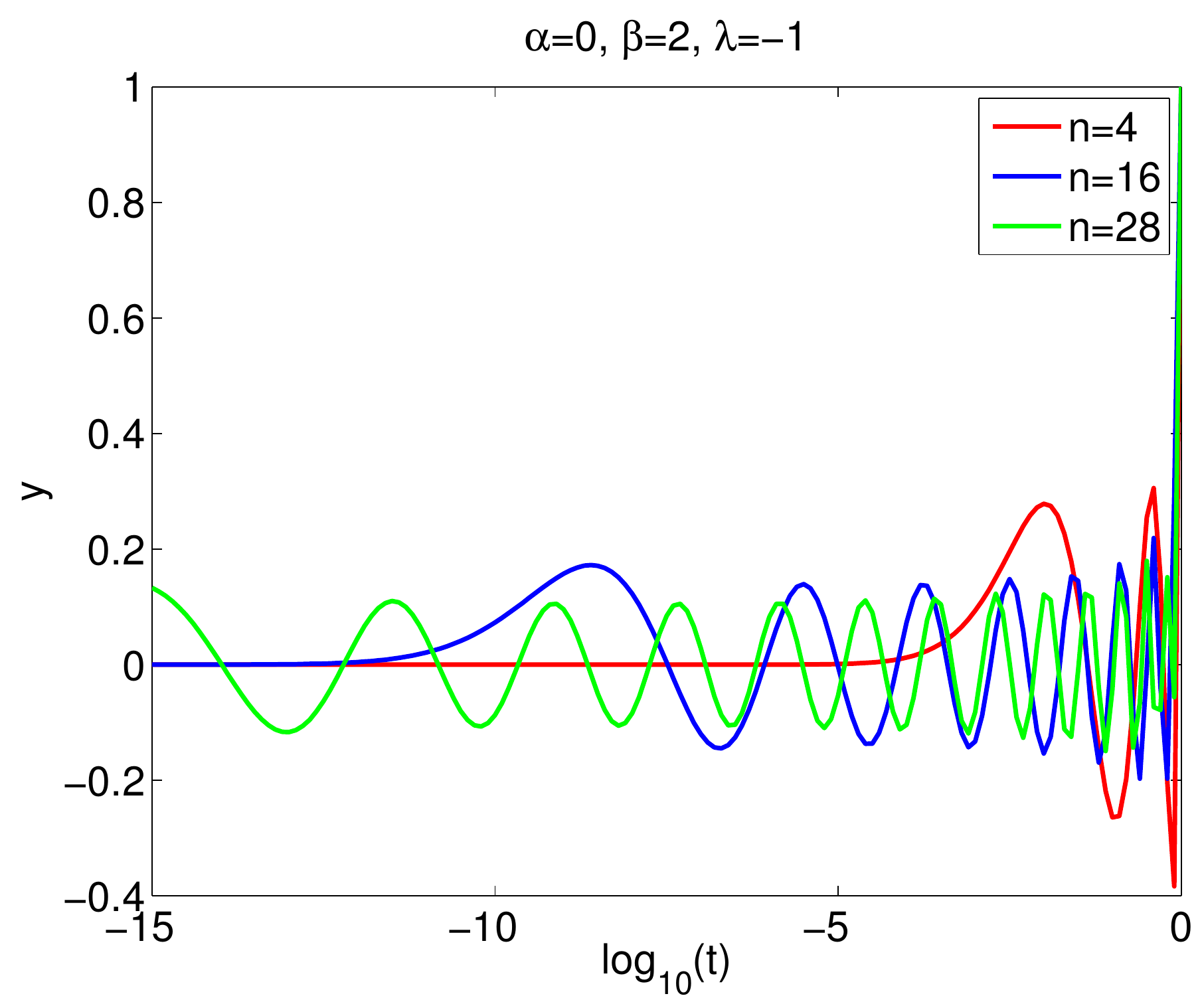}
\end{center}
\end{minipage}
\caption{ Left: Graphs of  $y=\mathcal{S}^{(\alpha,\beta)}_n(t)$. \qquad Right: Graphs of $y={\mathcal{S}}^{(\alpha,\beta,\lambda)}_n(t)$.}\label{Value_orth1}
\end{figure}
Therefore, we shall introduce below the generalized Log orthogonal functions (GLOFs) which are more suitable for numerical approximations of functions with weak singularities at one endpoint.

\subsection{Definition and properties}

\begin{rem}
As depicted in the left of Fig. \ref{Value_orth1}, values of
 LOFs near $t=0$ vary in a very large range. However, as shown in the right of Fig. \ref{Value_orth1}, GLOFs are much better behaved.
\qed
\end{rem}

\begin{defn}[GLOFs]\label{def_G_orth}
{\em
Let $\alpha,\beta>-1$, $\lambda\in\mathbb{R}$. We define the GLOFs by
\begin{equation}\label{G_orth}
{\mathcal{S}}^{(\alpha,\beta,\lambda)}_n(t):=t^{\frac{\beta-\lambda}{2}} {\mathcal{S}}^{(\alpha,\beta)}_n(t),\quad n\geq 0.
\end{equation}
In particular, ${\mathcal{S}}^{(\alpha,\beta,\beta)}_n(t)={\mathcal{S}}^{(\alpha,\beta)}_n(t)$.
}\end{defn}


GLOFs enjoy similar properties as those listed in Lemma \ref{lem:prop} for LOFs. Owing to the relation \eqref{S_orth}, it is obvious that
\begin{equation}\label{GS_orth}
    \int_{0}^1 \mathcal{S}_n^{(\alpha,\beta,\lambda)}(t)~  \mathcal{S}_m^{(\alpha,\beta,\lambda)}(t)\,~ (-\log{t})^{\alpha} \,t^\lambda \, {\rm d}t=\gamma^{(\alpha,\beta)}_n \delta_{mn},\end{equation}
    where $ \gamma^{(\alpha,\beta)}_n$ is the same as the definition in \eqref{S_orth}.

The derivative relation can be derived from the relation \eqref{S_Derivative} and  Definition \ref{def_G_orth}. Indeed,
\begin{equation}\begin{aligned}\label{D_G}
\partial_t {\mathcal{S}}^{(\alpha,\beta,\lambda)}_n(t)&=\frac{\beta-\lambda}{2}t^{\frac{\beta-\lambda}{2}-1}{\mathcal{S}}^{(\alpha,\beta)}_n(t)+t^{\frac{\beta-\lambda}{2}}\partial_t{\mathcal{S}}^{(\alpha,\beta)}_n(t)
\\&=\frac{\beta-\lambda}{2}t^{\frac{\beta-\lambda-2}{2}}{\mathcal{S}}^{(\alpha,\beta)}_n(t)+(\beta+1)t^{\frac{\beta-\lambda-2}{2}}{\mathcal{S}}^{(\alpha+1,\beta)}_{n-1}(t)
\\&=\frac{\beta-\lambda}{2}{\mathcal{S}}^{(\alpha,\beta,\lambda+2)}_n(t)+(\beta+1){\mathcal{S}}^{(\alpha+1,\beta,\lambda+2)}_{n-1}(t).
\end{aligned}\end{equation}

The  pseudo-derivative with respect to GLOFs should be defined as
\begin{equation}\label{pseudo2}
\widehat{\partial}_{\gamma,t}u=t^{1+\gamma}\partial_t \{t^{-\gamma} u\}.
\end{equation}
Then, thanks to the definition of $\mathcal{S}^{(\alpha,\beta,\lambda)}_n$ and  \eqref{S_Derivative}, we have the following important derivative relation:
\begin{equation}\label{GS_Derivative}
(\beta+1)^{-1}\widehat{\partial}_{\frac{\beta-\lambda}{2},t}~\mathcal{S}_n^{(\alpha,\beta,\lambda)}(t)= \mathcal{S}_{n-1}^{(\alpha+1,\beta,\lambda)}(t)=\sum_{l=0}^{n-1}\mathcal{S}_{l}^{(\alpha,\beta,\lambda)}(t), \quad n\geq 1.
\end{equation}

Let
\begin{equation}\label{lag:lambda}
{t}_j^{(\alpha,\beta,\lambda)}:={t}_j^{(\alpha,\beta)},~{\chi}_j^{(\alpha,\beta,\lambda)}:=({t}_j^{(\alpha,\beta)})^{\lambda-\beta}{\chi}_j^{(\alpha,\beta)},\;{j=0,1,\cdots,N,}
\end{equation}
and denote
 \begin{equation}\label{G_PN}
 {\mathcal{P}}^{\gamma,\log{t}}_{N}:=\{t^{\gamma} p(t):~p\in{\mathcal{P}}^{\log{t}}_{N}\}.
 \end{equation}
Then, we have the following {\em Gauss-GLOFs quadrature}:
   \begin{equation}\label{Gauss_Quad}
  \int_{0}^1 f(t) (-\log{t})^\alpha t^{\lambda} {\rm d}t= \sum_{j=0}^N f({t}_j^{(\alpha,\beta,\lambda)})\,{\chi}_j^{(\alpha,\beta,\lambda)},\quad \forall \, f \in \mathcal{P}^{\beta-\lambda,\log{t}}_{2N+1}.
  \end{equation}

 In addition, we derive from Definition \ref{def_LOF} and the closed form of the Laguerre polynomial that
 \begin{equation*}\label{GLOFs}
{\mathcal{S}}^{(\alpha,\beta,\lambda)}_n(t)=\sum_{k=0}^n\frac{(-1)^k}{k!}{{n}\choose{n-k}}t^{\frac{\beta-\lambda}{2}}[-(\beta+1)\log{t}]^k,\quad t\in {I}:=(0,1).
\end{equation*}


\subsection{Projection estimate}
Let $\alpha,\beta>-1$ and $\lambda\in\mathbb{R}$.
 We define the projection operator
 $\pi_N^{\alpha,\beta,\lambda}$:   $L^2_{\chi^{\alpha,\lambda}} \rightarrow   \mathcal{P}^{\frac{\beta-\lambda}{2},\log{t}}_{N}$
by
\begin{equation}\label{projection2}
(u-\pi_N^{\alpha,\beta,\lambda} u, v)_{\chi^{\alpha,\lambda}}=0, \quad \forall u\in L^2_{\chi^{\alpha,\lambda}},\; v\in \mathcal{P}^{\frac{\beta-\lambda}{2},\log{t}}_{N},
\end{equation}
where $ \chi^{\alpha,\lambda}(t):=(-\log{t})^\alpha t^\lambda$.

Thanks to the orthogonality of the basis $\{\mathcal{S}^{(\alpha,\beta,\lambda)}_n\}_{n=0}^\infty$, we have
\begin{equation}\label{projection_expan}
\pi_N^{\alpha,\beta,\lambda} u=\sum_{n=0}^N \hat{u}^{\alpha,\beta,\lambda}_n \,\mathcal{S}^{(\alpha,\beta,\lambda)}_n \;\text{ with }\; \hat{u}^{\alpha,\beta,\lambda}_n=(\gamma^{(\alpha,\beta)}_n)^{-1}\int_{0}^1 u(t)\, \mathcal{S}^{(\alpha,\beta,\lambda)}_n(t) \,\chi^{\alpha,\lambda} (t) {\rm d} t.
\end{equation}

To better describe the approximability of  $\pi_N^{\alpha,\beta,\lambda}$, we  define  non-uniformly weighted Sobolev  spaces
\begin{equation}\label{HilbertSpace2}
{A}^{k}_{\alpha,\beta,\lambda}({I}):=\{v\in L^{2}_{\chi^{\alpha,\lambda}}({I}):~{\widehat{\partial}_{\frac{\beta-\lambda}{2},t}}^{\,j} v\in  L^{2}_{\chi^{\alpha+j,\lambda}}({I}),~j=1,2,\ldots,k\},\quad k\in \mathbb{N},
\end{equation}
with the corresponding semi-norm and norm defined by
\begin{equation*}
|v|_{{A}^{m}_{\alpha,\beta,\lambda}}:=\|\widehat{\partial}_{\frac{\beta-\lambda}{2},t}^{\,m}{v}\|_{\chi^{\alpha+m,\lambda}},\quad \|v\|_{{A}^{m}_{\alpha,\beta,\lambda}}:=\left(\sum_{k=0}^m |v|^2_{{A}^{k}_{\alpha,\beta,\lambda}}\right)^{\frac{1}{2}}.
\end{equation*}


\begin{thm}\label{thm_proj2}
Let $m,\, N,\,k\in \mathbb{N}$, $\lambda\in\mathbb{R}$ and $\alpha,\beta>-1$. For any $u\in{A}^{m}_{\alpha,\beta,\lambda}({I})$ and $0\leq k\leq\widetilde{m}=\min\{m,N+1\}$, we have
\begin{equation}\label{proj2}
\|\widehat{\partial}_{\frac{\beta-\lambda}{2}, t}^{\,k} (u-\pi_N^{\alpha,\beta,\lambda} u)\|_{\chi^{\alpha+k,\lambda}}\leq  \sqrt{(\beta+1)^{k-\widetilde{m}}\frac{(N-\widetilde{m}+1)!}{(N-{k}+1)!}}~ \|\widehat{\partial}_{\frac{\beta-\lambda}{2},t}^{\,\widetilde{m}}
 u\|_{\chi^{\alpha+\widetilde{m},\lambda}},
\end{equation}
where  $\widehat{\partial}_{\frac{\beta-\lambda}{2},t}$ is the pseudo-derivative defined in \eqref{pseudo2}.
\end{thm}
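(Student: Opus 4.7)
The plan is to mirror the proof of Theorem \ref{thm_Proj}, substituting the GLOF analogues for the LOF tools used there. First I would iterate the derivative identity \eqref{GS_Derivative} to obtain
$$\widehat{\partial}_{\frac{\beta-\lambda}{2},t}^{\,l}\,\mathcal{S}_n^{(\alpha,\beta,\lambda)}(t)=(\beta+1)^l\,\mathcal{S}_{n-l}^{(\alpha+l,\beta,\lambda)}(t),\qquad 0\leq l\leq n,$$
which is the direct counterpart of \eqref{rela_d}. Using the orthogonality \eqref{GS_orth} applied to the functions $\mathcal{S}_{n-l}^{(\alpha+l,\beta,\lambda)}$ in the weight $\chi^{\alpha+l,\lambda}$ and expanding $u=\sum_{n\geq 0}\hat{u}_n^{\alpha,\beta,\lambda}\,\mathcal{S}_n^{(\alpha,\beta,\lambda)}$, the weighted norm of each pseudo-derivative decouples as
$$\|\widehat{\partial}_{\frac{\beta-\lambda}{2},t}^{\,l} u\|_{\chi^{\alpha+l,\lambda}}^2=\sum_{n=l}^{\infty}(\beta+1)^{2l}\,\gamma_{n-l}^{(\alpha+l,\beta)}\,|\hat{u}_n^{\alpha,\beta,\lambda}|^2,\qquad l\geq 1.$$

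Since $\pi_N^{\alpha,\beta,\lambda}u$ is obtained by truncating the expansion at $n=N$ as in \eqref{projection_expan}, the projection error collapses to a tail sum:
$$\|\widehat{\partial}_{\frac{\beta-\lambda}{2},t}^{\,k}(u-\pi_N^{\alpha,\beta,\lambda}u)\|_{\chi^{\alpha+k,\lambda}}^2=\sum_{n=N+1}^{\infty}(\beta+1)^{2k}\,\gamma_{n-k}^{(\alpha+k,\beta)}\,|\hat{u}_n^{\alpha,\beta,\lambda}|^2.$$
I would then factor out the ratio $\gamma_{n-k}^{(\alpha+k,\beta)}/\gamma_{n-\widetilde{m}}^{(\alpha+\widetilde{m},\beta)}$, bound it by its supremum on $\{n\geq N+1\}$, and re-insert the missing weight to recognise $\|\widehat{\partial}_{\frac{\beta-\lambda}{2},t}^{\,\widetilde{m}}u\|_{\chi^{\alpha+\widetilde{m},\lambda}}^2$, giving
$$\leq (\beta+1)^{2(k-\widetilde{m})}\,\frac{\gamma_{N+1-k}^{(\alpha+k,\beta)}}{\gamma_{N+1-\widetilde{m}}^{(\alpha+\widetilde{m},\beta)}}\,\|\widehat{\partial}_{\frac{\beta-\lambda}{2},t}^{\,\widetilde{m}} u\|_{\chi^{\alpha+\widetilde{m},\lambda}}^2.$$
Finally, plugging in the closed form $\gamma_n^{(\alpha,\beta)}=\Gamma(n+\alpha+1)/[(\beta+1)^{\alpha+1}\Gamma(n+1)]$ makes the ratio of Gamma-factors cancel to $(\beta+1)^{\widetilde{m}-k}(N-\widetilde{m}+1)!/(N-k+1)!$, which yields \eqref{proj2} after taking square roots.

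The only non-routine bookkeeping step is justifying the replacement $\sup_{n\geq N+1}\gamma_{n-k}^{(\alpha+k,\beta)}/\gamma_{n-\widetilde{m}}^{(\alpha+\widetilde{m},\beta)}=\gamma_{N+1-k}^{(\alpha+k,\beta)}/\gamma_{N+1-\widetilde{m}}^{(\alpha+\widetilde{m},\beta)}$. A quick simplification shows this ratio equals $(\beta+1)^{\widetilde{m}-k}/[(n-\widetilde{m}+1)(n-\widetilde{m}+2)\cdots(n-k)]$, which is monotonically decreasing in $n$ because $k\leq\widetilde{m}$, so the supremum is attained at $n=N+1$. Beyond this, I expect no real obstacle: the scaling $t^{(\beta-\lambda)/2}$ and the weight $(-\log t)^\alpha t^\lambda$ in Definition \ref{def_G_orth} and the pseudo-derivative \eqref{pseudo2} have been engineered precisely so that the orthogonality and the derivative recursion take the same formal shape as for the LOFs, and the argument of Theorem \ref{thm_Proj} transfers verbatim once the three substitutions $\widehat{\partial}_t\leftrightarrow\widehat{\partial}_{\frac{\beta-\lambda}{2},t}$, $\mathcal{S}_n^{(\alpha,\beta)}\leftrightarrow\mathcal{S}_n^{(\alpha,\beta,\lambda)}$, $\chi^{\alpha,\beta}\leftrightarrow\chi^{\alpha,\lambda}$ are made.
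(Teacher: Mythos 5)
Your proposal is correct and follows essentially the same route as the paper: the authors likewise iterate \eqref{GS_Derivative} to get $\widehat{\partial}_{\frac{\beta-\lambda}{2},t}^{\,l}\mathcal{S}_n^{(\alpha,\beta,\lambda)}=(\beta+1)^l\mathcal{S}_{n-l}^{(\alpha+l,\beta,\lambda)}$, invoke \eqref{GS_orth} for the Parseval identity, and then repeat the tail-sum and Gamma-ratio computation from Theorem \ref{thm_Proj}. Your explicit verification that the ratio $\gamma_{n-k}^{(\alpha+k,\beta)}/\gamma_{n-\widetilde{m}}^{(\alpha+\widetilde{m},\beta)}$ is decreasing in $n$ is a detail the paper leaves implicit, and it checks out.
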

\begin{proof}
 For any $u\in{A}^{m}_{\alpha,\beta,\lambda}({I})$,  we can expand it as  $u=\sum_{n=0}^\infty \hat{u}^{\alpha,\beta,\lambda}_n \mathcal{S}_n^{(\alpha,\beta,\lambda)}$. Due to
  \begin{equation*}
 \widehat{\partial}_{\frac{\beta-\lambda}{2},t}^{\,l} ~\mathcal{S}^{(\alpha,\beta,\lambda)}_n(t)\overset{\eqref{GS_Derivative} }{=}(\beta+1)^l\mathcal{S}^{(\alpha+l,\beta,\lambda)}_{n-l}(t), \quad l\leq n,
 \end{equation*}
and  \eqref{GS_orth}, we have
$$\|\widehat{\partial}_{\frac{\beta-\lambda}{2},t}^{\,l} u\|^2_{\chi^{\alpha+l,\lambda}}=\sum_{n=l}^\infty (\beta+1)^{2l}\gamma^{(\alpha+l,\beta)}_{n-l} |\hat{u}^{\alpha,\beta,\lambda}_n|^2,\quad l\geq 1.$$
Then, by following the same procedure as in the proof of Theorem \ref{thm_Proj}, we can obtain the desired result \eqref{proj2}.
\end{proof}

\subsection{Interpolation estimate}
Let $\{t_j^{\alpha,\beta}\}_{j=0}^N$ be the same set of collocation points as
for the LOFs. We define the interpolation operator
 ${\mathcal{I}}^{\alpha,\beta,\lambda}_N: C(I)\rightarrow   \mathcal{P}^{\frac{\beta-\lambda}2,\log{t}}_{N}$  by
 $$({\mathcal{I}}^{\alpha,\beta,\lambda}_N v)(t_j^{\alpha,\beta})=v(t_j^{\alpha,\beta}), \;j=0,1,\cdots,N.$$
 It is easy to see that
\begin{equation}\label{interpolation_21}
{\mathcal{I}}^{\alpha,\beta,\lambda}_N v(t)=\sum_{j=0}^N {v(t^{(\alpha,\beta)}_j)} {l}^{\beta,\lambda}_j\big({y}(t)\big), \quad y(t)=-(\beta+1)\log t
\end{equation}
where $\{{l}^{\beta,\lambda}_j\}$ are the Lagrange "polynomials" defined by
\begin{equation}\label{interpolation_22}
{l}^{\beta,\lambda}_j\big(y(t)\big)=\frac{t^{\frac{\beta-\lambda}{2}}\prod\limits_{i\neq j} \log (t^{(\alpha,\beta)}_i/t) }{(t^{(\alpha,\beta)}_j)^{\frac{\beta-\lambda}{2}}\prod\limits_{i\neq j}\log (t^{(\alpha,\beta)}_i/t^{(\alpha,\beta)}_j)}.
\end{equation}
In view of \eqref{interpolation_1} and \eqref{interpolation_2}, we have
$${\mathcal{I}}^{\alpha,\beta,\lambda}_N v(t)=t^{\frac{\beta-\lambda}{2}}{\mathcal{I}}^{\alpha,\beta}_N\{t^{\frac{\lambda-\beta}{2}}  v(t)\}\in \mathcal{P}^{\frac{\beta-\lambda}2,\log{t}}_{N}.$$
Hence, we can derive the following result  from Theorem \ref{thm_Interpolation}.
\begin{thm}\label{thm_Interpolation2}
Let $m$ and $N$ be positive integers, $\alpha,\beta>-1$ and $\lambda\in\mathbb{R}$.  For any $v\in C({I})\cap  A^m_{{\alpha,\beta,\lambda}}({I})$ and $\widehat{\partial}_{\frac{\beta-\lambda}{2},t}{v}\in A^{m-1}_{{\alpha,\beta,\lambda}}({I})$, we have
\begin{equation*}\label{I_thm_21}
\|\mathcal{I}^{\alpha,\beta,\lambda}_N v-v\|_{\chi^{\alpha,\lambda}}\leq c\sqrt{\frac{(N+1-\widetilde{m})!}{(\beta+1)^{\widetilde{m}-\alpha}N!}}\left\{c_{1}^{\beta}\|\widehat{\partial}_{\frac{\beta-\lambda}{2},t}^{\,\widetilde{m}}{v}\|_{\chi^{\alpha+m-1,\lambda}}+c_{2}^{\beta}\sqrt{\log N} \|\widehat{\partial}_{\frac{\beta-\lambda}{2},t}^{\,\widetilde{m}}{v}\|_{\chi^{\alpha+m,\lambda}}\right\}
\end{equation*}
where $c_{1}^{\beta}={(\beta+1)^{-\frac{1}{2}}},\quad c_{2}^{\beta}=2\sqrt{\max\{1,\beta+1\}}$ and $\widetilde{m}=\min\{m,N+1\}$.
\end{thm}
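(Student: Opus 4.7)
The plan is to reduce the GLOF interpolation estimate to the LOF interpolation estimate already established in Theorem~\ref{thm_Interpolation}, by exploiting the weight-stripping factorization ${\mathcal{I}}^{\alpha,\beta,\lambda}_N v(t)=t^{\frac{\beta-\lambda}{2}}{\mathcal{I}}^{\alpha,\beta}_N\{t^{\frac{\lambda-\beta}{2}}v(t)\}$ stated right before the theorem.

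Concretely, I would introduce the auxiliary function $w(t):=t^{\frac{\lambda-\beta}{2}}v(t)$. Then ${\mathcal{I}}^{\alpha,\beta,\lambda}_N v-v=t^{\frac{\beta-\lambda}{2}}({\mathcal{I}}^{\alpha,\beta}_N w-w)$, and a direct change of weight $\chi^{\alpha,\lambda}=t^{\lambda-\beta}\chi^{\alpha,\beta}$ gives the identity $\|{\mathcal{I}}^{\alpha,\beta,\lambda}_N v-v\|_{\chi^{\alpha,\lambda}}=\|{\mathcal{I}}^{\alpha,\beta}_N w-w\|_{\chi^{\alpha,\beta}}$. This lets me apply Theorem~\ref{thm_Interpolation} to $w$, producing the bound with $c_1^\beta\|\widehat{\partial}_t^{\,\widetilde m}w\|_{\chi^{\alpha+m-1,\beta}}+c_2^\beta\sqrt{\log N}\,\|\widehat{\partial}_t^{\,\widetilde m}w\|_{\chi^{\alpha+m,\beta}}$.

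The remaining task is to convert the ordinary pseudo-derivatives of $w$ into the shifted pseudo-derivatives of $v$ that appear on the right-hand side of the claim. The key identity, to be proven by induction on $j$, is
\begin{equation*}
\widehat{\partial}_t^{\,j} w(t)=t^{\frac{\lambda-\beta}{2}}\,\widehat{\partial}_{\frac{\beta-\lambda}{2},t}^{\,j}v(t),\qquad j=0,1,2,\ldots.
\end{equation*}
The base case $j=0$ is the definition of $w$, and the inductive step follows from $\widehat{\partial}_{\frac{\beta-\lambda}{2},t}u=t^{1+\frac{\beta-\lambda}{2}}\partial_t\{t^{-\frac{\beta-\lambda}{2}}u\}$ together with $\widehat{\partial}_t=t\partial_t$: one checks $\widehat{\partial}_t\{t^{\frac{\lambda-\beta}{2}}\psi\}=t^{\frac{\lambda-\beta}{2}}\widehat{\partial}_{\frac{\beta-\lambda}{2},t}\psi$ and iterates. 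Plugging this into the weight conversion $\chi^{\alpha+j,\lambda}=t^{\lambda-\beta}\chi^{\alpha+j,\beta}$ yields $\|\widehat{\partial}_t^{\,j}w\|_{\chi^{\alpha+j,\beta}}=\|\widehat{\partial}_{\frac{\beta-\lambda}{2},t}^{\,j}v\|_{\chi^{\alpha+j,\lambda}}$, which converts both terms on the right-hand side of the LOF estimate into the stated form with shifted pseudo-derivatives.

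I anticipate no conceptual obstacle; the work is entirely bookkeeping. The only point requiring care is the inductive identity relating $\widehat{\partial}_t^{\,j}$ acting on $w$ to $\widehat{\partial}_{\frac{\beta-\lambda}{2},t}^{\,j}$ acting on $v$, since a miscount of a single power of $t$ would corrupt the weights $\chi^{\alpha+j,\lambda}$ and break the matching with Theorem~\ref{thm_Interpolation}. Once that identity and the weight-conversion $t^{\lambda-\beta}\chi^{\alpha,\beta}=\chi^{\alpha,\lambda}$ are confirmed, the estimate is immediate: the constants $c_1^\beta, c_2^\beta$ and the combinatorial factor $\sqrt{(N+1-\widetilde m)!/[(\beta+1)^{\widetilde m-\alpha}N!]}$ carry over verbatim from Theorem~\ref{thm_Interpolation} because the factorization is an isometry between the relevant non-uniformly weighted Sobolev spaces, i.e.\ $\|v\|_{A^k_{\alpha,\beta,\lambda}}=\|w\|_{A^k_{\alpha,\beta}}$ for every $k$.
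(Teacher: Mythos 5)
Your proposal is correct and follows essentially the same route as the paper: the authors likewise set $w=t^{\frac{\lambda-\beta}{2}}v$, use the norm identity $\|\mathcal{I}^{\alpha,\beta,\lambda}_N v-v\|_{\chi^{\alpha,\lambda}}=\|\mathcal{I}^{\alpha,\beta}_N w-w\|_{\chi^{\alpha,\beta}}$ together with the iterated derivative identity $\widehat{\partial}^{\,\widetilde m}_t\{t^{\frac{\lambda-\beta}{2}}v\}=t^{\frac{\lambda-\beta}{2}}\widehat{\partial}^{\,\widetilde m}_{\frac{\beta-\lambda}{2},t}v$, and then invoke Theorem~\ref{thm_Interpolation}. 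Your inductive verification of that derivative identity and the explicit weight conversion $\chi^{\alpha+j,\lambda}=t^{\lambda-\beta}\chi^{\alpha+j,\beta}$ are exactly the bookkeeping the paper leaves implicit.
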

\begin{proof}
Since
$$\|\mathcal{I}^{\alpha,\beta,\lambda}_N v-v\|_{\chi^{\alpha,\lambda}}=\|\mathcal{I}^{\alpha,\beta}_N \{t^{\frac{\lambda-\beta}{2}}v\}-t^{\frac{\lambda-\beta}{2}}v\|_{\chi^{\alpha,\beta}},$$
and
$$\widehat{\partial}_t \{t^{\frac{\lambda-\beta}{2}}v\}=t^{\frac{\lambda-\beta}{2}}\widehat{\partial}_{\frac{\beta-\lambda}{2},t} v~\Longrightarrow~\widehat{\partial}^{\widetilde{m}}_t \{t^{\frac{\lambda-\beta}{2}}v\}=t^{\frac{\lambda-\beta}{2}}\widehat{\partial}^{\widetilde{m}}_{\frac{\beta-\lambda}{2},t} v.$$
We can then derive the desired result  from the above relation and Theorem \ref{thm_Interpolation}.
\end{proof}

{
\subsection{Explicit error estimate for a class of  weakly singular functions}\label{sec3.3}
The result in Theorem \ref{thm_Interpolation2} is not easy to interpret for general functions, so we 
consider the following typical   weakly singular functions
$$f(t)= t^r(-\log{t})^k,\quad r\geq 0,~k\in\mathbb{N}_0.$$
We first present a very useful relation of the Laguerre polynomials $\mathscr{L}^{(\alpha)}_n(y),\,\alpha>-1$.
\begin{lmm}
Let $s>0$, $\alpha>-1$. For $k,n\in \mathbb{N}_0$ and $n> k$, there exists 
\begin{equation}\label{LemmaSF}
\int_{0}^\infty y^{\alpha+k} e^{-sy} \mathscr{L}^{(\alpha)}_n(y) {\rm d} y=(\frac{s-1}{s})^n\frac{k!}{s^{\alpha+k+1}}\sum_{j=0}^k\frac{ \Gamma(n-j+k+\alpha+1)}{(j!)^2~\Gamma(n-j+1)}(\frac{s}{1-s})^j.
\end{equation}
\end{lmm}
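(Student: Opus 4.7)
The cleanest route combines the Rodrigues formula
$$y^\alpha e^{-y}\mathscr{L}_n^{(\alpha)}(y)=\frac{1}{n!}\frac{d^n}{dy^n}\bigl(y^{n+\alpha}e^{-y}\bigr)$$
with repeated integration by parts. Rewriting the left-hand side of \eqref{LemmaSF} as
$$\frac{1}{n!}\int_0^\infty y^k e^{-(s-1)y}\,\frac{d^n}{dy^n}\bigl(y^{n+\alpha}e^{-y}\bigr)\,{\rm d}y$$
and integrating by parts $n$ times moves every derivative onto the smooth factor $y^k e^{-(s-1)y}$. Because $\alpha>-1$, the function $y^{n+\alpha}e^{-y}$ and all of its derivatives vanish at both $0$ and $+\infty$, so every boundary contribution drops and I pick up an overall sign of $(-1)^n$.

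\textbf{Execution.} Next I would expand $\frac{d^n}{dy^n}\bigl(y^k e^{-(s-1)y}\bigr)$ by Leibniz's rule. The $j$-th derivative of $y^k$ vanishes for $j>k$, and the hypothesis $n>k$ guarantees that the surviving terms are all well defined:
$$\frac{d^n}{dy^n}\bigl(y^k e^{-(s-1)y}\bigr)=\sum_{j=0}^k\binom{n}{j}\frac{k!}{(k-j)!}\bigl(-(s-1)\bigr)^{n-j}y^{k-j}e^{-(s-1)y}.$$
Substituting and using $e^{-(s-1)y}\cdot e^{-y}=e^{-sy}$ reduces each term to a standard Gamma integral
$$\int_0^\infty y^{n+\alpha+k-j}e^{-sy}\,{\rm d}y=\frac{\Gamma(n+\alpha+k-j+1)}{s^{n+\alpha+k-j+1}},$$
so the integral collapses to a finite sum indexed by $j\in\{0,\ldots,k\}$.

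\textbf{Reassembly.} To match the form on the right-hand side of \eqref{LemmaSF}, I would factor out the common prefactor $k!/s^{\alpha+k+1}$ together with the leading power $\bigl((s-1)/s\bigr)^n$. After doing so, the remaining $s$-dependence in the $j$-th term is exactly $s^j/(s-1)^j$, and the residual sign $(-1)^{n-j}\cdot(-1)^n=(-1)^{-j}$ absorbs into $(s/(s-1))^j$, converting it to $(s/(1-s))^j$. This produces precisely the expression stated in \eqref{LemmaSF}. Both sides are rational functions of $s$, so the limiting case $s\to 1$ (where $(s-1)/s\to 0$) follows by continuity from the generic case.

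\textbf{Main obstacle.} None of the individual steps is deep: Rodrigues plus integration by parts is textbook, and Leibniz plus Gamma integrals is automatic. The genuine pitfall is the bookkeeping of signs, factorials, and the powers of $s$ and $s-1$ in the final repackaging, which must be organised carefully into the exact combinations demanded by \eqref{LemmaSF}. A fully equivalent proof uses the Laguerre generating function $\sum_n\mathscr{L}_n^{(\alpha)}(y)z^n=(1-z)^{-\alpha-1}\exp(-yz/(1-z))$: multiplying by $y^{\alpha+k}e^{-sy}$ and integrating gives the closed form $\Gamma(\alpha+k+1)s^{-(\alpha+k+1)}(1-z)^k\bigl(1+(1-s)z/s\bigr)^{-(\alpha+k+1)}$, after which the coefficient of $z^n$ is obtained by binomial expansion and yields the same identity.
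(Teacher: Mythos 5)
Your route is exactly the paper's: Rodrigues' formula, $n$ integrations by parts (with the boundary terms killed by $\alpha>-1$ and $s>0$), Leibniz applied to $\partial_y^n\bigl(y^k e^{(1-s)y}\bigr)$ using $n>k$, Gamma integrals, and the same repackaging into powers of $(s-1)/s$; the paper differs only in quoting a table identity for $k=0$ before running this argument for $k>0$, whereas your version handles all $k$ uniformly. One caveat about the bookkeeping you yourself flag as the main pitfall: since $\partial_y^j y^k=\frac{k!}{(k-j)!}y^{k-j}$, the Leibniz coefficient is $\binom{n}{j}\frac{k!}{(k-j)!}$, and the denominator in the final sum comes out as $j!\,(k-j)!$ rather than the $(j!)^2$ printed in \eqref{LemmaSF}; the two coincide for $k\le 1$ but differ for $k\ge 2$ (e.g.\ $n=3$, $k=2$, $\alpha=0$ gives $2s^3-18s^2+36s-20$ over $s^6$ on the left but not on the right as printed). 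This is a slip in the statement itself — the paper's own computation writes $\frac{k!}{j!}y^{k-j}$ for $\partial_y^j y^k$ — so your argument, carried out carefully, proves the corrected identity; your generating-function alternative yields the same $j!\,(k-j)!$ and is an equally valid (and arguably cleaner) independent check.
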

\begin{proof}
The case $k=0$ is a direct result of \cite[7.414: 8]{Gradshteyn2007In}, i.e., 
$$\int_{0}^\infty e^{-sy}y^\alpha \mathscr{L}^{(\alpha)}_n(y) {\rm d} y=\frac{\Gamma(\alpha+n+1)}{\Gamma(n+1)}(1-1/s)^n s^{-\alpha-1}.$$
For $k>0$, owing to Rodrigues' formula (see Szego \cite[(5.1.5)]{szego1975orthogonal} ), we have that
\begin{equation}\begin{aligned}\label{LemmaSF2}
\int_{0}^\infty y^{k}&e^{-sy}y^{\alpha}\mathscr{L}^{(\alpha)}_n(y) {\rm d} y=\frac{1}{n!}\int_{0}^\infty y^{k} e^{(1-s)y}~ \partial_y^n(y^{n+\alpha}e^{-y}) {\rm d} y\\
&=\frac{(-1)^n}{n!}\int_{0}^\infty  \partial_y^n(y^{k} e^{(1-s)y})~y^{n+\alpha}e^{-y} {\rm d} y\\
&=\frac{(-1)^n}{n!}\sum_{j=0}^k {{n}\choose{j}} (1-s)^{n-j} \int_{0}^\infty \frac{ k!}{j!} y^{k-j} ~y^{n+\alpha}e^{-sy} {\rm d} y\\
&=(-1)^n\sum_{j=0}^k\frac{ k! \Gamma(n-j+k+\alpha+1)}{(j!)^2\Gamma(n-j+1)}\frac{(1-s)^{n-j}}{s^{n-j+\alpha+k+1}}.\\
\end{aligned}\end{equation}
One can easily check the equivalence of  the relations \eqref{LemmaSF} and \eqref{LemmaSF2}, which completes the proof. 
\end{proof}
 
 With the above lemma in hand, we have the following error estimate:
 \begin{thm}\label{estimateSF}
 Given  
 $f(t)=t^r(-\log{t})^k,~ r\geq 0,~k\in\mathbb{N}_0.$
 Let $\lambda>-1-2r$,   $\alpha,\beta >-1$ and $\beta>\lambda$. Then, we have 
 $$ f\in L^2_{\chi^{\alpha,\lambda}}\;\text{ and }\; R_{\alpha,\beta,\lambda}=\left |\frac{2r+\lambda-\beta}{2r+2+\lambda+\beta}\right| < 1,$$
and 
 \begin{equation}\label{ProjectionSF}
\|f-\pi_N^{\alpha,\beta,\lambda} f\|_{\chi^{\alpha,\lambda}} \leq c ~(k+1)! ~N^{\frac{\alpha+1}{2}+k}~(R_{\alpha,\beta,\lambda})^N \;\text{ when }\; N>-\dfrac{2k+\alpha+2}{2\log(R_{r,\beta,\lambda})},
\end{equation}
where 
$$c\approx  \sqrt{\dfrac{2^{\alpha+1+k}(\beta+1)^{2\alpha+2-k}}{(\beta+\lambda+2r+2)^{\alpha+1+k}}}.$$
 \end{thm}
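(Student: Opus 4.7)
The plan is to exploit the orthogonal expansion $f=\sum_{n=0}^{\infty} \hat{f}_n^{\alpha,\beta,\lambda}\,\mathcal{S}_n^{(\alpha,\beta,\lambda)}$ supplied by \eqref{projection_expan}, so that by Parseval's identity
\[
\|f-\pi_N^{\alpha,\beta,\lambda} f\|_{\chi^{\alpha,\lambda}}^2 \;=\; \sum_{n=N+1}^{\infty} \gamma_n^{(\alpha,\beta)}\,|\hat{f}_n^{\alpha,\beta,\lambda}|^2,
\]
and the entire task reduces to an explicit computation of each coefficient via Lemma \eqref{LemmaSF}, followed by a geometric-series bound for the tail.

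Before the core calculation, the two qualitative assertions can be dispatched quickly. Membership $f\in L^2_{\chi^{\alpha,\lambda}}$ reduces under $y=-\log t$ to $\int_0^\infty y^{2k+\alpha}e^{-(2r+\lambda+1)y}\,dy$, which is finite since $\lambda>-1-2r$ and $\alpha>-1$. For $R_{\alpha,\beta,\lambda}<1$, the denominator $\beta+\lambda+2r+2$ is positive by hypothesis, so the inequality is equivalent to $|2r+\lambda-\beta|<\beta+\lambda+2r+2$; splitting by sign yields the two linear inequalities $\beta>-1$ and $\lambda>-1-2r$, both of which are assumed.

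The core step is to evaluate the inner product $\int_0^1 f\,\mathcal{S}_n^{(\alpha,\beta,\lambda)}\,\chi^{\alpha,\lambda}\,dt$. Using $\mathcal{S}_n^{(\alpha,\beta,\lambda)}(t)=t^{(\beta-\lambda)/2}\mathscr{L}_n^{(\alpha)}(-(\beta+1)\log t)$ together with the substitution $y=-(\beta+1)\log t$, this becomes
\[
\frac{1}{(\beta+1)^{\alpha+k+1}}\int_0^\infty y^{\alpha+k}\,e^{-sy}\,\mathscr{L}_n^{(\alpha)}(y)\,dy,\qquad s=\frac{\beta+\lambda+2r+2}{2(\beta+1)},
\]
and a direct check gives $|(s-1)/s|=R_{\alpha,\beta,\lambda}$ with $s>0$. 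Lemma \eqref{LemmaSF} then furnishes an exact closed form: a multiplicative constant times $R^n$ times a finite sum of $k+1$ terms built from ratios $\Gamma(n-j+k+\alpha+1)/\Gamma(n-j+1)$ and powers $(s/(1-s))^j$.

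The final step is to assemble the estimate. Using the gamma-ratio inequality \eqref{Gammaratio}, each of the $k+1$ summands in the Lemma \eqref{LemmaSF} formula is $O(n^{k+\alpha})$; the factors $|s/(1-s)|^j=R^{-j}$ contribute at most $R^{-k}$; combining with $\gamma_n^{(\alpha,\beta)}\sim n^\alpha/(\beta+1)^{\alpha+1}$ yields
\[
\gamma_n^{(\alpha,\beta)}|\hat{f}_n^{\alpha,\beta,\lambda}|^2 \;\lesssim\; C^2\,\bigl((k+1)!\bigr)^2\,R^{2n}\,n^{2k+\alpha},
\]
with $C$ matching the claimed prefactor. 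The assumed threshold $N>-(2k+\alpha+2)/(2\log R_{\alpha,\beta,\lambda})$ is precisely what forces the ratio $R^2(1+1/n)^{2k+\alpha}$ of successive tail terms to stay bounded away from $1$, so the tail is dominated by its leading term modulo a geometric factor; taking square roots delivers the claimed bound. The main obstacle will be the careful bookkeeping of constants---specifically, showing that the factorial $(k+1)!$ and the exact polynomial power $N^{(\alpha+1)/2+k}$ emerge from bounding all $k+1$ Lemma \eqref{LemmaSF} terms uniformly by the $j=k$ contribution, and that the $R^{-k}$ absorbed into the prefactor combines correctly with $s^{-(\alpha+k+1)}$ to yield the stated constant $c$.
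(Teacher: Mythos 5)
Your proposal follows essentially the same route as the paper's proof: Parseval for the tail of the expansion, the substitution $y=-(\beta+1)\log t$ reducing the coefficient to the integral of Lemma \eqref{LemmaSF} with $s=(\beta+\lambda+2r+2)/(2\beta+2)$, the gamma-ratio bound \eqref{Gammaratio} to control the $k+1$ summands, and a tail estimate driven by the threshold on $N$. The only (immaterial) difference is at the last step, where you dominate the tail by a geometric series via the ratio of successive terms while the paper compares the sum to $\int_N^\infty (R_{r,\beta,\lambda})^{2x-2k}x^{2k+\alpha}\,{\rm d}x$ and uses monotonicity of $(R_{r,\beta,\lambda})^{2x-2k}x^{2k+\alpha+2}$; both yield the stated bound.
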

 \begin{proof}
Since $\lambda>-1-2r$, it is easy to check that $f\in L^2_{\chi^{\alpha,\lambda}}$ and 
$R_{\alpha,\beta,\lambda}=\left |\frac{2r+\lambda-\beta}{2r+2+\lambda+\beta}\right| < 1$.
\\
Thanks to the orthogonality of the basis $\{\mathcal{S}^{(\alpha,\beta,\lambda)}_n\}_{n=0}^\infty$, we can write\\
$$f=\sum_{n=0}^\infty \hat{f}^{\alpha,\beta,\lambda}_n \,\mathcal{S}^{(\alpha,\beta,\lambda)}_n,\quad \pi_N^{\alpha,\beta,\lambda} f=\sum_{n=0}^N \hat{f}^{\alpha,\beta,\lambda}_n \,\mathcal{S}^{(\alpha,\beta,\lambda)}_n$$
with coefficients
\begin{equation*}\label{projection_coeffcient}
\hat{f}^{\alpha,\beta,\lambda}_n=(\gamma^{(\alpha,\beta)}_n)^{-1}\int_{0}^1 f(t)\, \mathcal{S}^{(\alpha,\beta,\lambda)}_n(t) \,\chi^{\alpha,\lambda} (t) {\rm d} t,\quad n=0,1,\ldots,N.
\end{equation*}
Let $y(t)=-(\beta+1)\log{t}$ and ${\rm d}y=-(\beta+1)t^{-1}{\rm d} t$. It holds that
\begin{equation*}\begin{aligned}
\hat{f}^{\alpha,\beta,\lambda}_n&=\frac{(\beta+1)^{\alpha+1}\Gamma(n+1)}{\Gamma(n+\alpha+1)}\int_{0}^1t^r(-\log{t})^k\,t^{(\beta-\lambda)/2} \mathscr{L}^{(\alpha)}_n(y(t)) \,(-\log{t})^\alpha t^{\lambda} {\rm d} t\\
&=\frac{(\beta+1)^{-k}\Gamma(n+1)}{\Gamma(n+\alpha+1)}\int_{0}^\infty \exp(-\frac{\beta+\lambda+2r+2}{2(\beta+1)}y)y^{\alpha+k} \mathscr{L}^{(\alpha)}(y) {\rm d} y.
\end{aligned}\end{equation*}
Taking $s=({\beta+\lambda+2r+2})/{(2\beta+2)}$  into \eqref{LemmaSF}, we have
\begin{equation}\label{P0SF}
\hat{f}^{\alpha,\beta,\lambda}_n=(\frac{s-1}{s})^n~ \frac{\Gamma(n+1)~k!(\beta+1)^{-k} }{\Gamma(n+\alpha+1)~s^{\alpha+k+1}}\sum_{j=0}^k\frac{ \Gamma(n-j+k+\alpha+1)}{(j!)^2~\Gamma(n-j+1)}(\frac{s-1}{s})^{-j}.
\end{equation}
 Owing to \cite[Lemma 2.1]{ZhWX13}, we have
\begin{equation}\label{P1SF}
\frac{\Gamma(n+a)}{\Gamma(n+b)}\le \nu_n^{a,b} n^{a-b},\quad n+a>1~ \text{and} ~n+b>1,
\end{equation}
where
\begin{equation}\label{P2SF}
\nu_n^{a,b}=\exp\Big(\frac{a-b}{2(n+b-1)}+\frac{1}{12(n+a-1)}+\frac{(a-b)^2}{n}\Big).
\end{equation}
Combing \eqref{P0SF}-\eqref{P2SF} and the fact that $R_{r,\beta,\lambda}<1$, we have
\begin{equation*}\begin{aligned}
\|f-\pi_N^{\alpha,\beta,\lambda} f\|^2_{\chi^{\alpha,\lambda}} =&\sum_{N+1}^\infty |\hat{f}^{\alpha,\beta,\lambda}_n|^2 \gamma_n^{\alpha,\beta} \leq C_{\alpha,\beta}^{\lambda,r}\sum_{N+1}^\infty  \nu^{\alpha,0}_n n^{2k+\alpha}(R_{r,\beta,\lambda})^{2n-2k}
\\ \leq&C_{\alpha,\beta}^{\lambda,r}\nu^{\alpha,0}_N \int_N^\infty (R_{r,\beta,\lambda})^{2x-2k}x^{2k+\alpha} {\rm d}x.
\end{aligned}\end{equation*}
where $C_{\alpha,\beta}^{\lambda,r}=\dfrac{2^{\alpha+1+k}(\beta+1)^{2\alpha+2-k}}{(\beta+\lambda+2r+2)^{\alpha+1+k}}\big((k+1)!\big)^2$. 
Finally, as $a^{2x-2k}x^{2k+\alpha+2}$ is a decreasing function of $x$ when $N>-\dfrac{2k+\alpha+2}{2\log(R_{r,\beta,\lambda})}$, we conclude
that
  \begin{equation*}\begin{aligned}
\|f-\pi_N^{\alpha,\beta,\lambda} f\|_{\chi^{\alpha,\lambda}} \leq \sqrt{C_{\alpha,\beta}^{\lambda,r}}~  N^{\frac{\alpha+1}{2}+k}~(R_{r,\beta,\lambda})^{N-k} \;\text{ when }\; N>-\dfrac{2k+\alpha+2}{2\log(R_{r,\beta,\lambda})}.
\end{aligned}\end{equation*}
The proof is complete.
\end{proof}
 
The above theorem provides an accurate estimate for the GLOFs to a large class of  singular functions. In particular, by setting $\alpha=\lambda=0$, we have an estimate in $L^2$-norm.  

\begin{cor}\label{singularity_appro}
For $f(t)=t^r (-\log{t})^k,~r\geq 0,~k\in \mathbb{N}$,  it holds that
 \begin{equation}\label{EstimateSF_1}
 \|f-\pi_N^{0,\beta,0} f\| \leq {\sqrt{2}^k }(\beta+1)^{-k} k! N^k \sqrt{2(\beta+1)N}~\Big|\frac{2r-\beta}{2r+\beta+2}\Big|^{N-k}.
 \end{equation} 
 In particular, for $f=t^r,~r\geq 0$,  we have
 \begin{equation}\label{EstimateSF_2}
 \|f-\pi_N^{0,\beta,0} f\| \leq \sqrt{2(\beta+1)N}~\Big|\frac{2r-\beta}{2r+\beta+2}\Big|^{N}.
 \end{equation} 
 \qed
 \end{cor}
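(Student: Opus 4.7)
This corollary is a specialization of Theorem \ref{estimateSF} to $\alpha = \lambda = 0$, in which case $\|\cdot\|_{\chi^{\alpha,\lambda}}$ becomes the standard $L^2(I)$ norm; the plan is to redo the last two steps of that theorem's proof with these parameter values, tracking constants carefully enough to land on the explicit form \eqref{EstimateSF_1}, and then to read off \eqref{EstimateSF_2} as the $k = 0$ subcase.

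First I would verify the hypotheses of Theorem \ref{estimateSF}: $\lambda = 0 > -1-2r$ holds trivially under $r \geq 0$, so $f = t^r(-\log t)^k \in L^2(I)$, and $R_{0,\beta,0} = (2r-\beta)/(2r+\beta+2)$ satisfies $|R_{0,\beta,0}| < 1$ because, for $\beta > -1$ and $r\geq 0$, the denominator strictly dominates the absolute value of the numerator.

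The analytical heart is the second step. Starting from the closed-form coefficient (specialization of \eqref{P0SF})
\begin{equation*}
\hat f^{0,\beta,0}_n = \frac{k!\,R_{0,\beta,0}^{\,n}}{(\beta+1)^k\, s^{k+1}}\sum_{j=0}^k\frac{1}{(j!)^2}\frac{\Gamma(n-j+k+1)}{\Gamma(n-j+1)}\,R_{0,\beta,0}^{-j},
\end{equation*}
with $s = (\beta+2r+2)/(2(\beta+1))$, I would carry out three elementary reductions. The Gamma ratio is maximized at $j = 0$, giving $\Gamma(n-j+k+1)/\Gamma(n-j+1) \leq (n+1)(n+2)\cdots(n+k) \leq 2^k n^k$ for $n \geq k$. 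The $j$-sum is controlled by the sharpening $\sum_{j=0}^k|R_{0,\beta,0}|^{-j}/(j!)^2 \leq (1+|R_{0,\beta,0}|^{-1})^k \leq (2|R_{0,\beta,0}|^{-1})^k$, valid because $|R_{0,\beta,0}| < 1$. The identity $s^{-(k+1)} = 2^{k+1}(\beta+1)^{k+1}/(\beta+2r+2)^{k+1}$ then cancels the prefactor $(\beta+1)^{-k}$ and leaves one surplus $(\beta+1)$ that pairs with $\gamma_n^{(0,\beta)} = 1/(\beta+1)$ when forming the tail sum $\|f-\pi_N^{0,\beta,0}f\|^2 = \sum_{n>N}|\hat f_n|^2\gamma_n^{(0,\beta)}$.

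The final step is mechanical: apply the monotone-integrand bound $\sum_{n>N}n^{2k}|R_{0,\beta,0}|^{2(n-k)} \leq N^{2k+1}|R_{0,\beta,0}|^{2(N-k)}$ exactly as at the end of the proof of Theorem \ref{estimateSF}, take a square root, and regroup the numerical constants into the stated form $(\sqrt{2})^k(\beta+1)^{-k}k!\,N^k\sqrt{2(\beta+1)N}\,|R_{0,\beta,0}|^{N-k}$. Setting $k = 0$ in \eqref{EstimateSF_1} collapses the inner $j$-sum to its single $j=0$ term and trivialises all $k$-dependent factors to $1$, yielding \eqref{EstimateSF_2} at once. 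The main (and essentially only) obstacle is the constant bookkeeping in the second step: the sharpening $(1+|R|^{-1})^k \leq (2/|R|)^k$, valid because $|R|<1$, is what keeps the power of $2$ down to $(\sqrt{2})^k$ in the final form, whereas cruder estimates on either the Gamma ratio or the $j$-sum would introduce a stray factor like $4^k$ and break the stated bound.
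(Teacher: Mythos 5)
Your overall strategy---specializing Theorem \ref{estimateSF} (really, its proof) to $\alpha=\lambda=0$ and re-tracking the constants---is exactly what the paper intends: the corollary is stated with no separate proof, as a direct consequence of that theorem. Your verification of the hypotheses, the coefficient formula, and the monotone-integrand tail bound are all in order, and for $k=0$ the estimate \eqref{EstimateSF_2} does drop out (there one can even compute $\hat{f}_n=\pm R^n/s$ exactly, so the stated bound is very loose but certainly true).

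The gap is precisely in the constant bookkeeping that you identify as the only obstacle, and your resolution of it does not work. Your three reductions contribute, respectively, $2^k$ (Gamma ratio), $2^k$ (binomial bound on the $j$-sum) and $2^{k+1}$ (from $s^{-(k+1)}$), for a total factor $2^{3k+1}$ multiplying $|\hat{f}_n|$. This factor enters the final estimate \emph{linearly}---the square root in $\|f-\pi_N^{0,\beta,0}f\|=\bigl(\sum_{n>N}|\hat{f}_n|^2\gamma_n^{(0,\beta)}\bigr)^{1/2}$ does not halve it---so you land on roughly $8^k$, not $(\sqrt{2})^k$, and the assertion that the binomial sharpening ``keeps the power of $2$ down to $(\sqrt{2})^k$'' is not supported by any step of the computation. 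There is a second inconsistency: if $s^{-(k+1)}$ ``cancels the prefactor $(\beta+1)^{-k}$'' as you describe, then your final bound contains no factor $(\beta+1)^{-k}$ at all, whereas \eqref{EstimateSF_1} does; what actually survives is $(\beta+2r+2)^{-(k+1)}$, which can be arbitrarily close to $1$ (take $r=0$, $\beta$ near $-1$) and so cannot absorb the surplus powers of $2$. A concrete check: for $k=1$, $\beta=0$, $r=0$ the bound produced by your intermediate estimates exceeds the right-hand side of \eqref{EstimateSF_1}. In fairness, the paper is equally cavalier---the constant in Theorem \ref{estimateSF}, with its $(k+1)!$ and its $(\beta+\lambda+2r+2)^{-(\alpha+1+k)/2}$ factor, does not literally reduce to the corollary's constant under $\alpha=\lambda=0$ either---but as a derivation of the displayed inequality \eqref{EstimateSF_1}, your argument does not close.
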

 
 In order to verify the above theoretical results, we plot the error curves for the GLOFs approximation to $f(t)=t^r$ with various $r$ in Fig. \ref{graph_GLOFsApproxi}, left with $r\in (0,1)$ and right with $r$ being integers. We observe  exponential convergence for all $r\geq 0$. We also plot the error curves for the GLOFs approximation to $f(t)=t (-\log{t})^k$ and $f(t)=t^2 (-\log{t})^k$ in Fig. \ref{graph_GLOFsApproxi2}. We also observe  exponential convergence in all cases. All these numerical results are  consistent with the approximation results in Corollary \ref{singularity_appro}.
 
 \begin{figure}[htp!]
\begin{minipage}{0.495\linewidth}
\begin{center}
\includegraphics[scale=0.4]{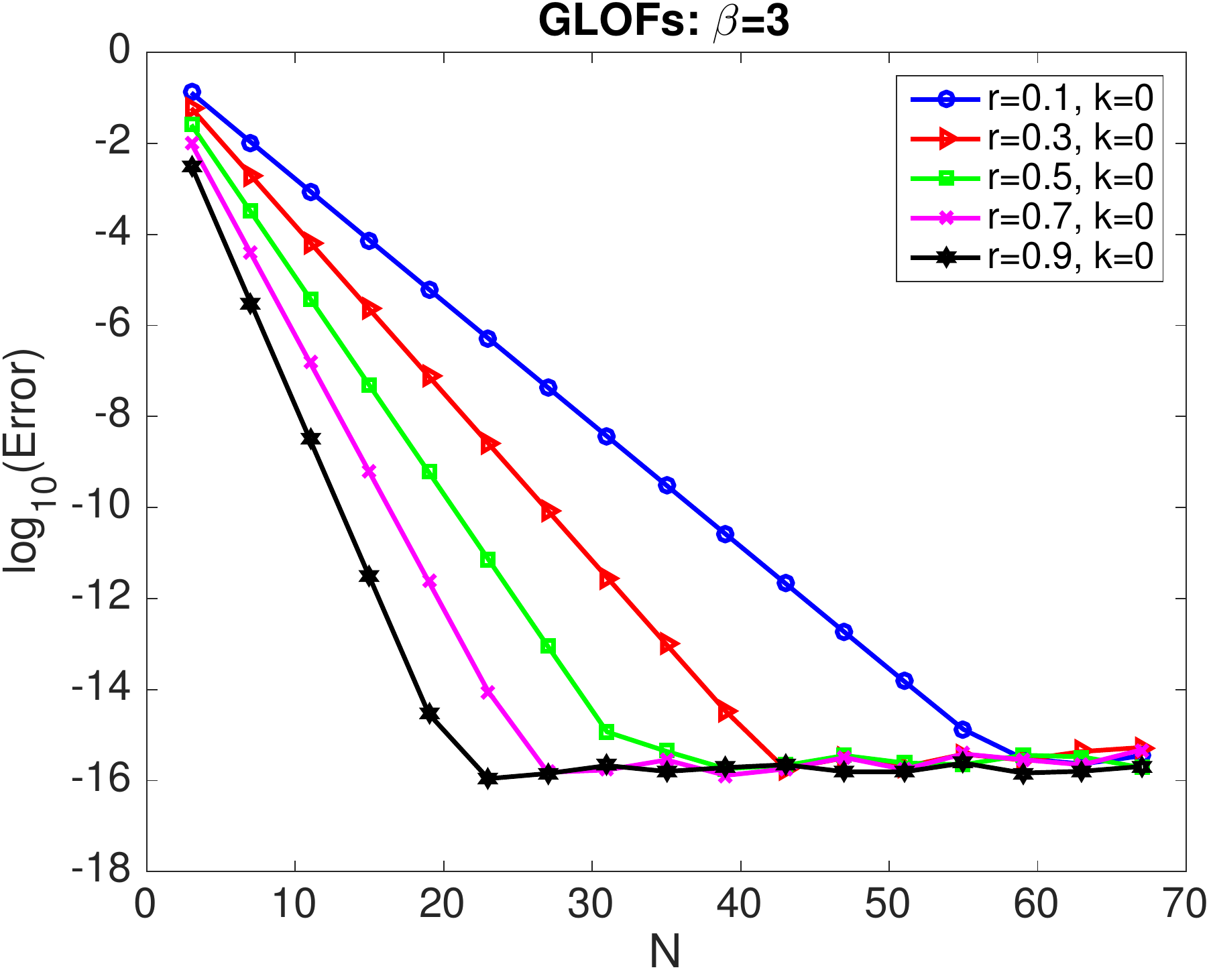}
\end{center}
\end{minipage}
\begin{minipage}{0.495\linewidth}
\begin{center}
\includegraphics[scale=0.4]{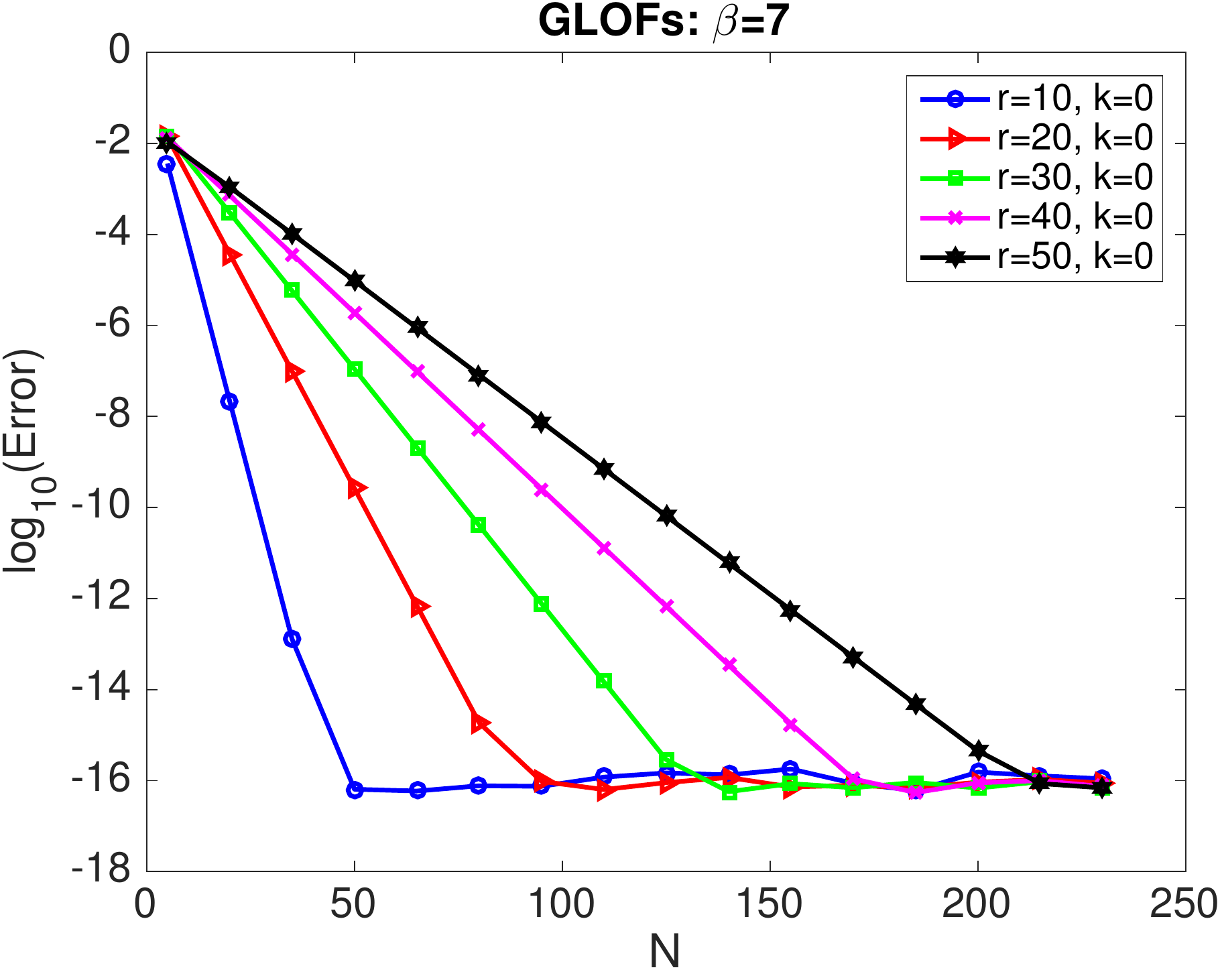}
\end{center}
\end{minipage}
\caption{GLOFs Approximation: $\mathcal{S}^{(\alpha,\beta,\lambda)}_n$, $\alpha=\lambda=0$. }\label{graph_GLOFsApproxi}
\end{figure}

 \begin{figure}[htp!]
\begin{minipage}{0.495\linewidth}
\begin{center}
\includegraphics[scale=0.4]{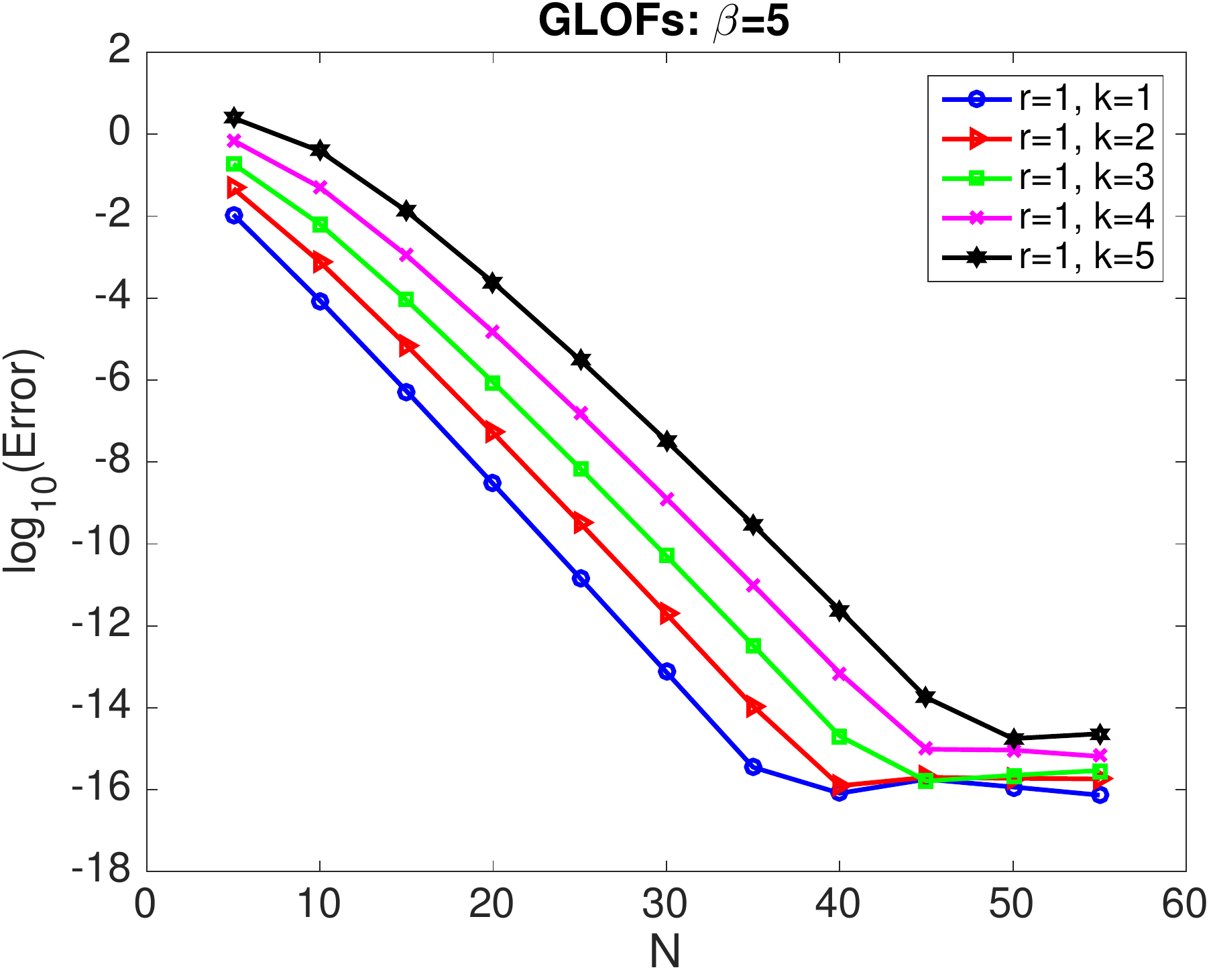}
\end{center}
\end{minipage}
\begin{minipage}{0.495\linewidth}
\begin{center}
\includegraphics[scale=0.4]{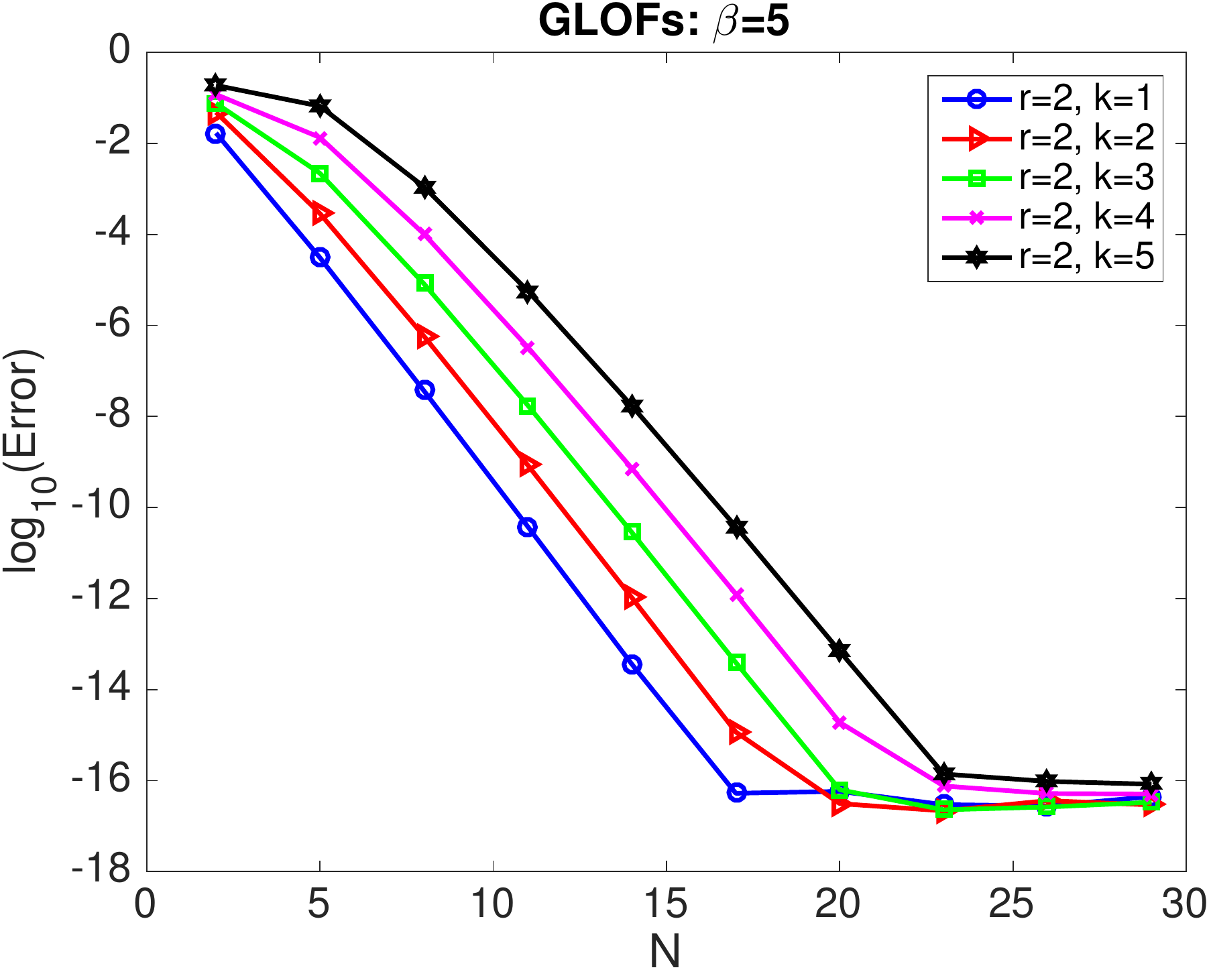}
\end{center}
\end{minipage}
\caption{GLOFs Approximation: $\mathcal{S}^{(\alpha,\beta,\lambda)}_n$, $\alpha=\lambda=0$. }\label{graph_GLOFsApproxi2}
\end{figure}
}

\section{Application to fractional differential equations}
\setcounter{equation}{0}
\setcounter{lmm}{0}
\setcounter{thm}{0}
 In this section, we shall  use GLOFs as the basis functions to solve some typical fractional differential equations. 

We first review the definitions of  Riemann-Liouville and Caputo fractional integrals and fractional derivatives (see e.g., \cite{podlubny1999fractional,samko1993fractional}).
\begin{defn}[{\bf Fractional integrals and derivatives}]\label{RLFDdefn}
 {\em For $t\in{I}=(0,1)$ and $\rho\in {\mathbb R}^+,$  the left and right fractional integrals are respectively defined  as
 \begin{equation}\label{leftintRL}
    \I{0}{t}{\rho} f(t)=\frac 1 {\Gamma(\rho)}\int_{0}^t \frac{f(\tau)}{(t-\tau)^{1-\rho}} {\rm d}y,\quad   \I{t}{1}{\rho} f(t)=\frac {1}  {\Gamma(\rho)}\int_{t}^1 \frac{f(\tau)}{(\tau-t)^{1-\rho}} {\rm d}\tau,.
\end{equation}
For real $s\in [k-1, k)$ with $k\in {\mathbb N},$  the  Riemann-Liouville fractional derivatives are defined by
\begin{equation}\label{RLdefn}
   \D{0}{t}{s} f(t)=\frac{{\rm d}^k}{{\rm d}t^k}\{\I{0}{t}{k-s} f(t)\},\quad    \D{t}{1}{s} f(t)=(-1)^k\frac{{\rm d}^k}{{\rm d}t^k}\{\I{t}{1}{k-s} f(t)\}.
\end{equation}
The Caputo fractional derivative  of order $s$ is defined  by
\begin{equation}\label{Caputodefn}
   \CD{0}{t}{s} f(t)=\I{0}{t}{k-s}\{\frac{{\rm d}^k}{{\rm d}t^k} f(t)\},\quad    \CD{t}{1}{s} f(t)=(-1)^k\I{t}{1}{k-s}\{\frac{{\rm d}^k}{{\rm d}t^k} f(t)\}.
\end{equation}}
\end{defn}


\subsection{An initial value problem (IVP)}
Given $g,\,q\in L^2({I})$, we consider the following Caputo fractional  differential equation of order $\nu\in (0,1)$:
\begin{equation}\label{fde_i0}
\CD{0}{t}{\nu}u(t)+q(t)u(t)=g(t),\quad t\in{I}; \qquad u(0)=u_0.
\end{equation}
We shall first homogenize the initial condition. Setting $u=v+u_0$ into the above equation, we find that the  problem \eqref{fde_i0} is equivalent to
\begin{equation}\label{fde_i}
\CD{0}{t}{\nu}v(t)+q(t)v(t)=g(t)-u_0q(t), \qquad v(0)=0.
\end{equation}
A main difficulty in obtaining accurate approximate solution of   \eqref{fde_i} is that the solution of this problem  is weakly singular at $t=0$ even if   $q$ and  $g$ are smooth. To design an effective approach to deal with this difficulty, we need to understand the nature of this singularity.

 Applying $\I{0}{t}{\nu}$ into both sides of \eqref{fde_i} and using  the fact that $ \I{0}{t}{s} \I{0}{t}{r}= \I{0}{t}{s+r}$, we find
$$
v(t)+\frac{1}{\Gamma(\nu)}\int_{0}^t(t-\tau)^{\nu-1} q(\tau)v(\tau) {\rm d}\tau=\I{0}{t}{\nu}\{g-u_0q\}(t).
$$
We then find from \cite[Theorem 2.1]{cao2003hybrid} that the solution
near $t=0$ behaves like
\begin{equation}\label{singularity_solu}
v(t)=\sum_{i=0}^\infty\sum_{j=1}^\infty \tilde{v}_{ij}~ t^{i+j\nu}.
\end{equation}
This is why usual approximations based on global or piece-wise polynomials
can not  approximate $v(t)$ well. On the other hand, based on the analysis from the last section,  the GLOFs  are particularly suitable for this problem.

 Let us define $X^0_N=\{t^{\frac{\beta-\lambda}{2}}p:~p\in  \mathcal{P}^{\log{t}}_N,~\beta>\lambda\}$. Then, the GLOF-Galerkin method for \eqref{fde_i} is: find $v_N\in X^0_N$ such that
 \begin{equation}\label{fde_i_scheme}
(\CD{0}{t}{\nu}v_N,w)+(qv_N,w)=\big(\mathcal{I}^{\alpha,\beta,\lambda}_N\{g-u_0 q\},w\big),\quad \forall w\in X^0_N.
\end{equation}
Writing $$v_N=\sum\limits_{n=0}^{N}\tilde{v}^{\alpha,\beta,\lambda}_n \mathcal{S}^{(\alpha,\beta,\lambda)}_n, \quad \bar v=( \tilde{v}^{\alpha,\beta,\lambda}_0,\tilde{v}^{\alpha,\beta,\lambda}_1,\cdots,\tilde{v}^{\alpha,\beta,\lambda}_N)^t,$$
and setting
\begin{equation}
 \begin{aligned}
&S_{kj}= (\CD{0}{t}{\nu} \mathcal{S}^{(\alpha,\beta,\lambda)}_j, \mathcal{S}^{(\alpha,\beta,\lambda)}_k), \quad S=(S_{kj}),\\
&M_{kj}=(q \mathcal{S}^{(\alpha,\beta,\lambda)}_j, \mathcal{S}^{(\alpha,\beta,\lambda)}_k), \quad M=(M_{kj}),\\
&f_j=(\mathcal{I}^{\alpha,\beta,\lambda}_N\{g-u_0 q\}, \mathcal{S}^{(\alpha,\beta,\lambda)}_j),\quad \bar f=(f_0,f_1,\ldots,f_N)^t,
\end{aligned}
\end{equation}
then \eqref{fde_i_scheme} reduces to the following linear system
\begin{equation}(S+M) \bar v=\bar f.\label{matrix}\end{equation}
The entries of $M$ and $\bar f$ can be computed accurately by using the
 Gauss-LOFs quadrature formula, but the computation of the stiffness matrix $S$ needs special care.

  Indeed, for any $v,\,w\in X_N^0$,
 $$\big(\CD{0}{t}{\nu}v,w\big)=\int_{0}^1\frac{1}{\Gamma(1-\nu)} \int_{0}^t \frac{v^{\prime}(s)}{(t-s)^{\nu}}\,{\rm d}s~ w(t){\rm d} t\overset{s=t\tau}{=}\frac{1}{\Gamma(1-\nu)} \int_0^1\int_0^1\frac{v^{\prime}(t\tau)}{(1-\tau)^{\nu}}\,{\rm d}\tau\,w(t)t^{1-\nu} {\rm d} t.
$$
Note that the integrand in the above is weakly singular as   $t\rightarrow 0$ and $\tau\rightarrow 1$. In order to compute accurately the inner  integral, we split it into two terms
\begin{equation*}\begin{aligned}
 \int_0^1&\frac{v^{\prime}(t\tau)}{(1-\tau)^{\nu}}\,{\rm d}\tau=\int_0^\frac{1}{2}v^{\prime}(t\tau)\,(1-\tau)^{-\nu}\,{\rm d}\tau+\int_\frac{1}{2}^1v^{\prime}(t\tau)\,(1-\tau)^{-\nu}\,{\rm d}\tau\\
 &=\frac{1}{2}\int_0^1v^{\prime}(\frac{t\tau}{2})\,(1-\frac{\tau}{2})^{-\nu}\,{\rm d}\tau+\frac{1}{4^{1-\nu}}\int_{-1}^1v^{\prime}_N(\frac{t(\xi+3)}{4})\,(1-\xi)^{-\nu}\,{\rm d}\xi.
\end{aligned}\end{equation*}
Hence,
\begin{equation}
\begin{split}
 \big(\CD{0}{t}{\nu}v,w\big)&=\frac{1}{2\Gamma(1-\nu)} \int_0^1\int_0^1v^{\prime}(\frac{t\tau}{2})\,(1-\frac{\tau}{2})^{-\nu}\,{\rm d}\tau\, w(t)t^{1-\nu} {\rm d} t\\
 &+\frac{1}{4^{1-\nu}\Gamma(1-\nu)} \int_0^1 \int_{-1}^1v^{\prime}(\frac{t(\xi+3)}{4})\,(1-\xi)^{-\nu}\,{\rm d}\xi \, w(t)t^{1-\nu} {\rm d} t.
\end{split}
\end{equation}
The first term has weak singularity as $t,\tau\rightarrow 0$ while the second term
has  weak singularities as $t\rightarrow 0$ and $\xi\rightarrow 1$.
Therefore, the first term can be computed by using the tensor product of Gauss-GLOF quadratures (in $t$ and $\tau$)  which is effective with weak singularities as $t,\tau\rightarrow 0$, and the second term can be computed by using the tensor product of Gauss-GLOF quadrature in $t$ and of Gauss-Jacobi quadrature in $\xi$ with weight function $(1-\xi)^{-\nu}$. More precisely,
\begin{equation*}\begin{aligned}
(\CD{0}{t}{\nu}v_N,w)\approx&\frac{1}{2\Gamma(1-\nu)}\sum_{i=0}^{N_I}\sum_{j=0}^{N_I} v^{\prime}_N(\frac{t_i~t_j}{2})\,(1-\frac{t_j}{2})^{-\nu}~w(t_i)t_i^{1-\nu}~ \chi_i\chi_j
\\&
+\frac{1}{4^{1-\nu} \Gamma(1-\nu)}\sum_{i=0}^{N_I}\sum_{j=0}^{N_I} v^{\prime}_{N}(\frac{{t_i}(\xi_j+3)}{4})\,w(t_i)t_i^{1-\nu}~ \chi_i\eta_j
\end{aligned}\end{equation*}
where $N_I\ge N$ is a suitable number,   $\{t_i,\chi_i\}_{i=0}^{N_I}$ are the  Gauss-LOFs  nodes with weight function $\chi^{0,0}\equiv 1$, and $\{\xi_i,\eta_i\}_{i=0}^{N_I}$  are the  Gauss-Jacobi nodes  with weight function $(1-\tau)^{-\nu}$.


We  present below some numerical results.  We  consider
 \begin{equation}\label{MLF}
\CD{0}{t}{\nu}u(t)+\mathcal{K} u(t)=0,\qquad u(0)=1,
\end{equation}
whose solution  is \cite[Theorem 4.3]{diet10}  $u(t)=E_{\nu}(-\mathcal{K} t^\nu)$  where  $E_{\gamma}(z)$ is
the Mittag-Leffler function
\begin{equation}\label{M_L}
E_{\gamma}(z)=\sum_{j=0}^\infty \frac{z^j}{\Gamma(\gamma j+1)}.
\end{equation}

We fix the parameters $\alpha=0,\beta=5,\lambda=0$, and plot in the left of Fig. \ref{fig_sec5_2} the convergence rates for various values of $\nu$ with $\mathcal{K}=1$. It is clear that the solution is not smooth in the classical Sobolev space, but it is smooth in the space  defined through the pseudo-derivative, so we still obtain an exponential convergence rate.

\begin{figure}[htp!]
\begin{minipage}{0.495\linewidth}
\begin{center}
\includegraphics[scale=0.4]{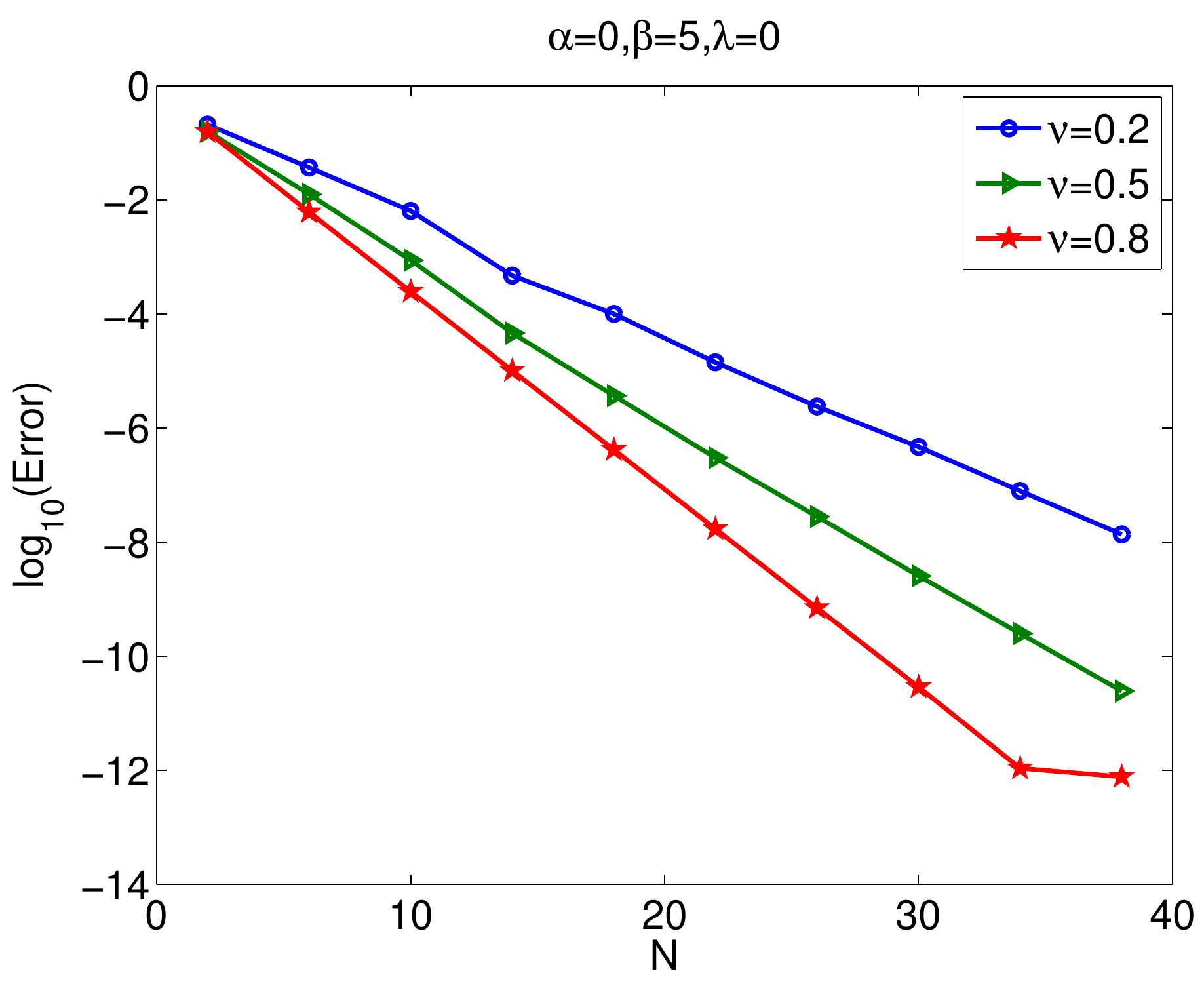}
\end{center}
\end{minipage}
\begin{minipage}{0.495\linewidth}
\begin{center}
\includegraphics[scale=0.4]{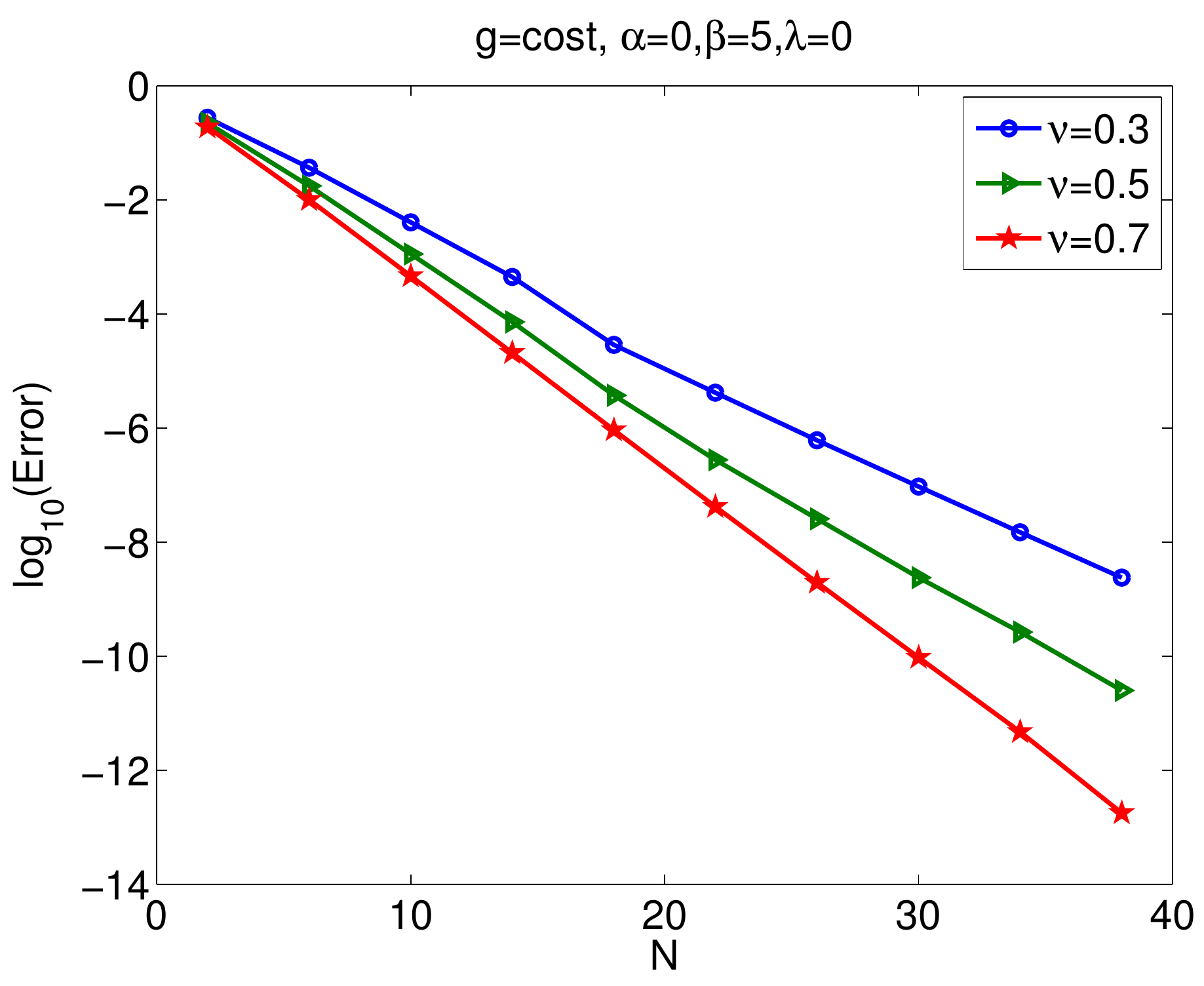}
\end{center}
\end{minipage}
\caption{ Left: \eqref{MLF} with $\mathcal{K}=1$. \quad Right:  $g=\cos{t}$.}\label{fig_sec5_2}
\end{figure}

Next, we consider
\begin{equation*}
\CD{0}{t}{\nu}u(t)+(1+\sin{t})u(t)=\cos t,\qquad u(0)=1,
\end{equation*}
for which the exact solution is unknown. Although the coefficients are smooth, the exact solution  is expected to be  weakly singular near zero but smooth in the space  defined through the pseudo-derivative.
We fix the parameters $\alpha=0,\beta=5,\lambda=0$, and plot in the right of Fig.~\ref{fig_sec5_2} the convergence rates for various values of $\nu$. We  obtain again  exponential convergence rates.

\subsection{A boundary value problem (BVP)}

We consider
\begin{equation}\label{fde_b0}
\begin{cases}
-\D{0}{t}{\mu}u(t)+q(t)u(t)=g(t),\quad t\in{I},\\
 u(0)=0,\quad u(1)=0,
 \end{cases}
\end{equation}
where $\mu\in (1,2)$ and  $g,\,q$ are given functions.

 Similar  to the initial problem \eqref{fde_i0},  the solution of the above problem is usually weakly singular even with smooth $g$ and $q$. However, it can be approximated accurately by GLOFs since the solution is smooth in the space defined through  the pseudo-derivative  \eqref{pseudo2}.

 Let us denote
 $$X^{0,0}_N:=\text{span}\{\phi_n=\frac{n}{n+\alpha}\mathcal{S}^{(\alpha,\beta,\lambda)}_n-\mathcal{S}^{(\alpha,\beta,\lambda)}_{n-1}:~n=1,2,\ldots,N,~\beta>\lambda\}.$$
Note that we have $\phi_n(0)=\phi_n(1)=0$ for $n\ge 1$. 
Our  GLOF Galerkin method is: find
 $u_N\in X^{0,0}_N$
 such that
 \begin{equation}\label{fde_b_scheme}
-(\D{0}{t}{\mu}u_N,w)+(qu_N,w)=(\mathcal{I}^{\alpha,\beta,\lambda}_Ng,w),\quad \forall w\in X^{0,0}_N.
\end{equation}
The stiffness and mass matrices of the above problem can be formulated as in the case of the initial value problem considered above.

We now present some numerical results. We first take $q(t)=e^t$ and the exact solution to be $u(t)=t^{3/2}(1-t)$. The convergence rate is shown
on the left of the Fig. \ref{fig_sec5_3}. We then take $q(t)=e^t$ and $g(t)=t\sin{t}$. In this case, the exact solution is not known explicitly so we used a very fine mesh to compute a reference solution.
The convergence rate is shown
on the right of the Fig. \ref{fig_sec5_3}. We observe that the error converges exponentially in both cases despite the fact that the solutions are weakly singular near $t=0$.

\begin{figure}[htp!]
\begin{minipage}{0.495\linewidth}
\begin{center}
\includegraphics[scale=0.4]{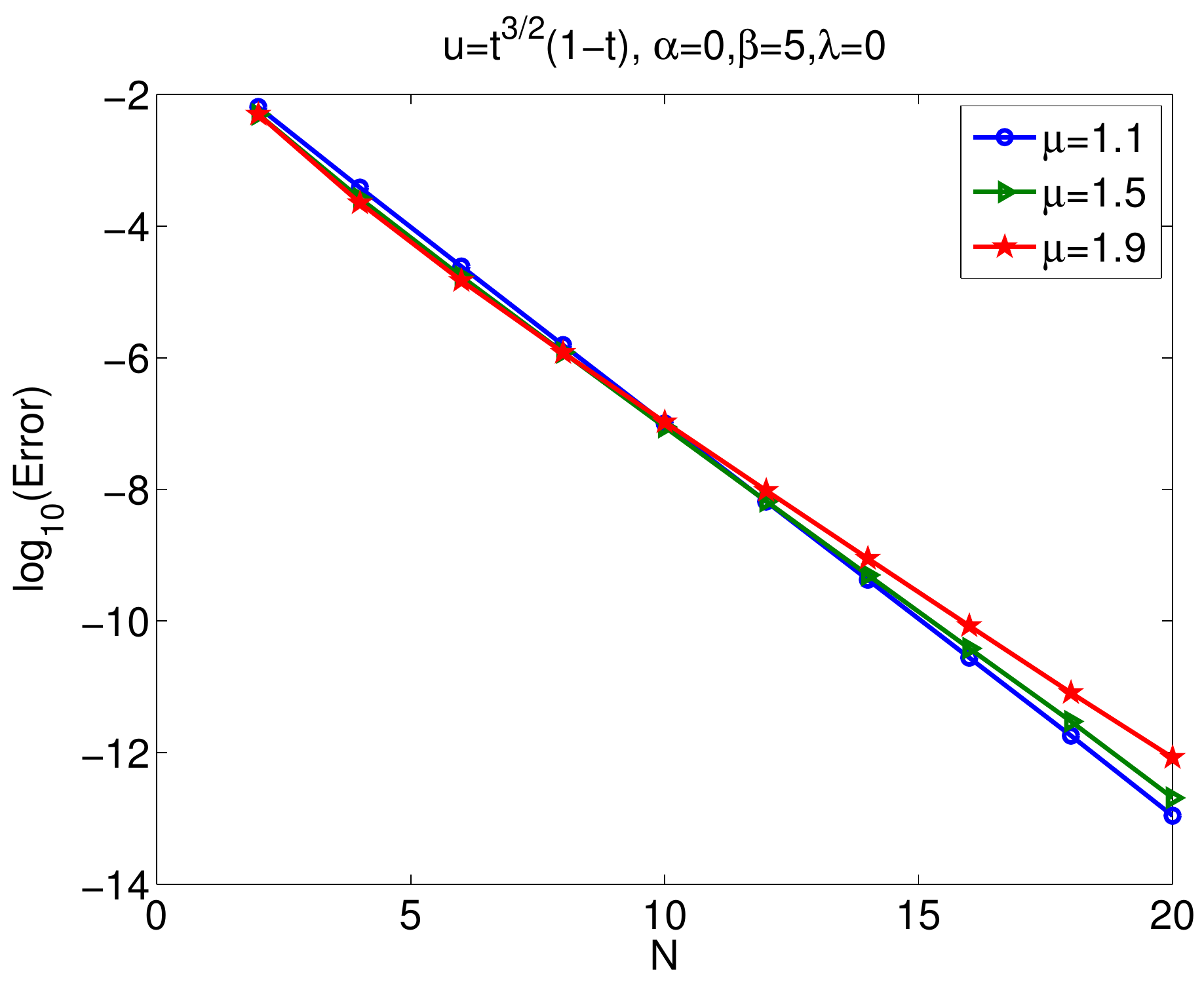}
\end{center}
\end{minipage}
\begin{minipage}{0.495\linewidth}
\begin{center}
\includegraphics[scale=0.4]{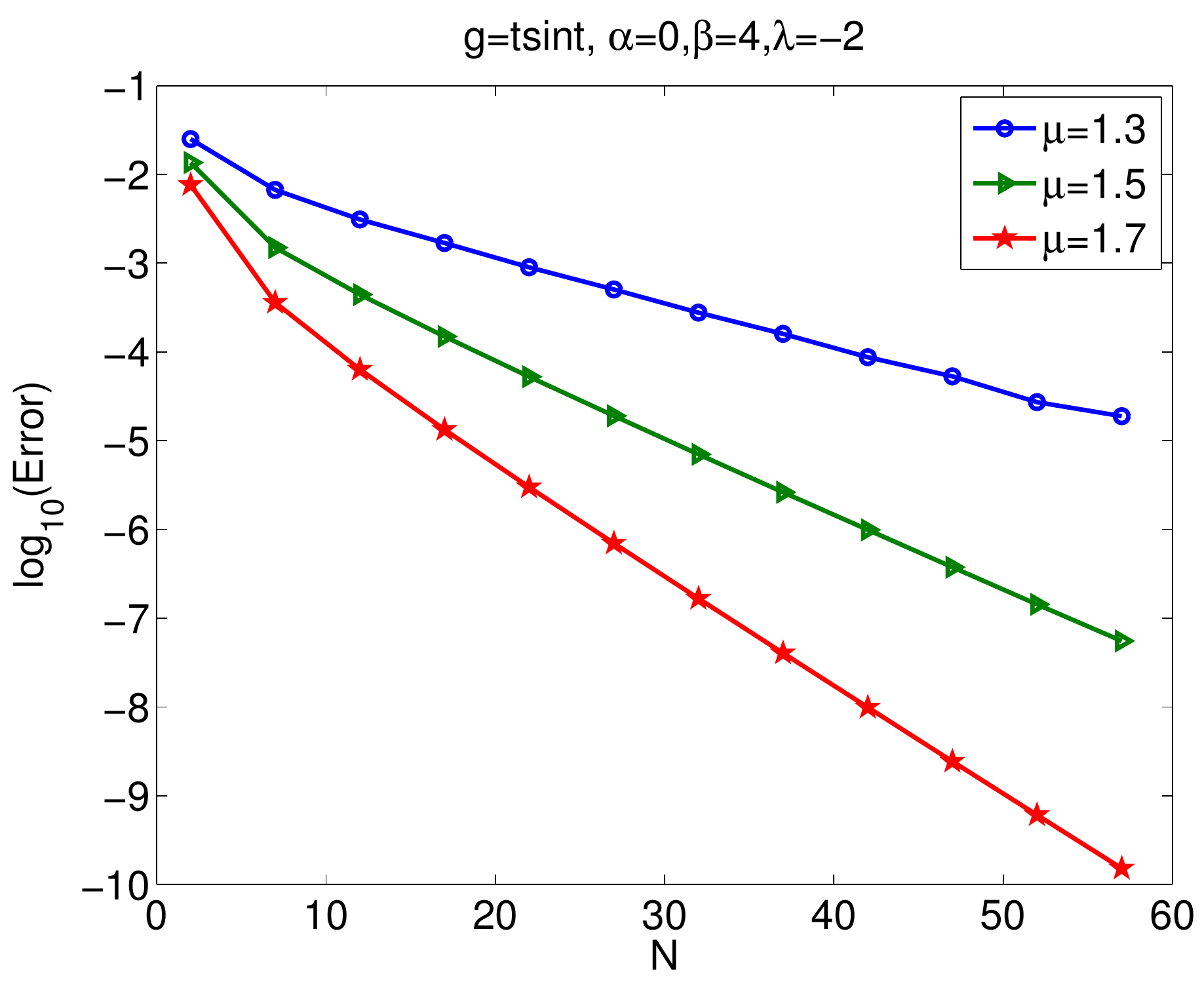}
\end{center}
\end{minipage}
\caption{ Left: $q(t)=e^t$ and $u=t^{3/2}(1-t)$. \quad Right: $q(t)=e^t$ and $g=t\sin{t}$.}\label{fig_sec5_3}
\end{figure}

\subsection{Error analysis}
We carry out below error analysis for the GLOF Galerkin schemes for both the initial and boundary value problems.

We define
\begin{itemize}
\item{} for $0\leq s<\frac{1}{2}$,
\begin{equation*}
H^s_0({I}):=\{f\in L^2({I}):~\D{0}{t}{s}f,~\D{t}{1}{s}f \in L^2({I})\};
\end{equation*}
\item{} for $\frac{1}{2}<s\leq{1}$,
\begin{equation*}
H^s_0({I}):=\{f\in L^2({I}):~\D{0}{t}{s}f,~\D{t}{1}{s}f \in L^2({I}),~f(0)=f(1)=0\}.
\end{equation*}
\end{itemize}
equipped with norm and semi-norm
\begin{equation*}
\|f\|_s=\sqrt{\|f\|^2+|f|_s^2} \quad\text{with}\quad |f|_s=\sqrt{\|\D{0}{t}{s}f\|^2+\|\D{t}{1}{s}f\|^2}.
\end{equation*}
It can be shown that the above definitions coincide with the usual definitions by space interpolation.

To avoid repetition, we use the following
  weak formulation for both problems \eqref{fde_i} and \eqref{fde_b0}: Find $P\in H^s_0({I})$ such that
\begin{equation}\label{weak_both}
a(P,w):=(-1)^{\sigma(s)} (\D{0}{t}{s}P,\D{t}{1}{s}w)+(qP,w)=(Q,w),\quad \forall w\in H^s_0({I}),
\end{equation}
where
\begin{itemize}
\item{} for  \eqref{fde_i}:\qquad $s={\nu}/{2}$,\quad$\sigma(s)=0$,\quad $P(t)=v(t)$,  \quad$Q(t)=\{g-u_0q\}(t)$
\item{} for  \eqref{fde_b0}:\,\,\,\quad$s={\mu}/{2}$,\quad$\sigma(s)=1$,\quad $P(t)=u(t)$, \quad $Q(t)=g(t)$
\end{itemize}

The error analysis follows similar procedures used in  \cite{ervin2006variational} and \cite{Li.X09,Li.X10}. We first recall some useful results.
\begin{lmm}Let $s,r\in [0,1]/\{\frac{1}{2}\}$ and $s\leq r$. For any $f,h\in H^{r}_0({I})$, there exists
\begin{itemize}
\item {\bf \cite[Corollary 2.15]{ervin2006variational}}
\begin{equation}\label{Poincare}
\|f\|\leq c_1|f|_s\leq c_2 |f|_r.
\end{equation}
\item \cite[Lemma 2.8]{Li.X10}
 \begin{equation}\label{lmm11}
(\D{0}{t}{2s}f,h)=(\D{0}{t}{s}f,\D{t}{1}{s}h).
\end{equation}
\item \cite[Lemma 2.6]{Li.X10}
\begin{equation}\label{lmm12}
c_1 |f|_s^2\leq \frac{(\D{0}{t}{s}f,\D{t}{1}{s}f)}{\cos(s\pi)}\leq c_2 |f|_s^2.
\end{equation}

\end{itemize}
where $c_1,~c_2$ are two positive constants independent of function $f$.
\end{lmm}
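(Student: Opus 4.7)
The three identities are standard facts about one-sided Riemann--Liouville derivatives on a finite interval and are taken verbatim from the cited references; a self-contained proof plan is mostly a matter of recalling the Fourier/extension framework. My plan is to derive each part by extending $f\in H^s_0(I)$ by zero to $\tilde f\in H^s(\mathbb{R})$ (legitimate when $s\neq 1/2$, since no trace constraint is then lost) and then exploiting the Fourier symbols of the Liouville derivatives on the line, together with the $L^2$-adjointness $(\I{0}{t}{s}f,h)=(f,\I{t}{1}{s}h)$, which follows directly from Fubini applied to the kernel form.

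The core computation is \eqref{lmm12}. Under the Fourier transform on $\mathbb{R}$ one has $\widehat{\D{0}{t}{s}\tilde f}(\xi)=(i\xi)^s\hat{\tilde f}(\xi)$ and $\widehat{\D{t}{1}{s}\tilde f}(\xi)=(-i\xi)^s\hat{\tilde f}(\xi)$; writing $(\pm i\xi)^s=|\xi|^s e^{\pm is\pi\,\mathrm{sign}(\xi)/2}$ and applying Parseval, one obtains
\[
(\D{0}{t}{s}f,\D{t}{1}{s}f)=\cos(s\pi)\int_{\mathbb{R}}|\xi|^{2s}|\hat{\tilde f}(\xi)|^2\,d\xi,
\]
while $\|\D{0}{t}{s}f\|^2=\|\D{t}{1}{s}f\|^2=\int_{\mathbb{R}}|\xi|^{2s}|\hat{\tilde f}(\xi)|^2\,d\xi$, so \eqref{lmm12} follows with $c_1=c_2=1/2$. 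The identity \eqref{lmm11} then drops out by combining $\D{0}{t}{2s}f=\partial_t\I{0}{t}{1-2s}f$ (or its second-derivative analogue when $2s>1$) with the adjoint identity and an ordinary integration by parts whose boundary contributions vanish because $f,h\in H^s_0(I)$.

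For the Poincar\'e chain \eqref{Poincare}, I would use that the kernel $(t-\tau)^{s-1}/\Gamma(s)$ lies in $L^1(0,1)$, hence $\I{0}{t}{s}:L^2(I)\to L^2(I)$ is bounded with norm depending only on $s$. The representation $f=\I{0}{t}{s}\D{0}{t}{s}f$ (valid once the built-in trace at $t=0$ is invoked) then yields $\|f\|\leq c\|\D{0}{t}{s}f\|\leq c_1|f|_s$, and similarly $\D{0}{t}{s}f=\I{0}{t}{r-s}\D{0}{t}{r}f$ gives $|f|_s\leq c_2|f|_r$. The main obstacle I foresee is not any single computation but a careful accounting of endpoint/trace behaviour across the threshold $s=1/2$: this is precisely the value at which the definition of $H^s_0(I)$ changes character, the factor $\cos(s\pi)$ in \eqref{lmm12} vanishes, and zero-extension fails to preserve $H^s$-regularity, which is why the hypothesis excludes it.
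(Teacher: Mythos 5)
The paper offers no proof of this lemma at all --- it is a pure citation of Ervin--Roop (Cor.\ 2.15) and Li--Xu (Lemmas 2.6 and 2.8) --- so there is nothing to compare line by line; your sketch is a faithful reconstruction of exactly the arguments used in those references: zero extension plus the Fourier symbols $(\pm i\xi)^s$ and Plancherel for \eqref{lmm12}, adjointness of $\I{0}{t}{s}$ together with the semigroup/composition property for \eqref{lmm11}, and the $L^1$-kernel boundedness of $\I{0}{t}{s}$ with the reconstruction $f=\I{0}{t}{s}\D{0}{t}{s}f$ for \eqref{Poincare}; your computation giving $c_1=c_2=1/2$ in \eqref{lmm12} and your diagnosis of why $s=\tfrac12$ is excluded are both correct. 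Two points deserve slightly more care than your sketch gives them: (i) all three identities are first established for $f,h\in C_0^\infty({I})$ and then extended by density, which is precisely where the equivalence of the paper's derivative-based definition of $H^s_0({I})$ with the interpolation-space definition (asserted but not proved in the paper) is actually needed; and (ii) the left-hand side of \eqref{lmm11} involves $\D{0}{t}{2s}f$, so that identity requires $f$ to be regular enough for the order-$2s$ derivative to exist (effectively $2s\leq r$, or a weak interpretation of the left side) --- a hypothesis the lemma as stated elides but which the cited Lemma 2.8 of Li--Xu makes explicit, and which is the version actually invoked later in \eqref{scheme_both} with $\mu=2s$.
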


Thanks to relation \eqref{lmm11}  and the identity  below
$$(\D{0}{t}{\mu}u,w)=(\D{0}{t}{s}u,\D{t}{1}{s} w),\quad \mu=2s,~s\in(1/2,1),$$
 we can rewrite  \eqref{fde_i_scheme} and \eqref{fde_b_scheme}  as: find $P_N\in X_N$   such that
\begin{equation}\label{scheme_both}
a(P_N,w)=(\mathcal{I}^{\alpha,\beta,\lambda}_NQ,w),\quad \forall w\in X_N,
\end{equation}
where $X_N=X^0_N$ for \eqref{fde_i_scheme}, and  $X_N=X^{0,0}_N$ for \eqref{fde_b_scheme}.

\begin{lmm}\label{lmm_elliptic}
 If $\min\limits_{t\in{[0,1]}}q(t)\geq 0$, then there exists $c(s)>0$ such that for any $P,\tilde{P}\in H^s_0({I})$, we have
\begin{equation}\label{elliptic}
c(s)\|P\|_s^2\leq  a(P,P),\quad a(P,\tilde{P})\leq \|P\|_s\|\tilde{P}\|_s.
\end{equation}
\begin{proof}
Due to the fact $(-1)^{\sigma(s)} \cos(s\pi)>0$ and the relation \eqref{lmm12}, it is easy to derive that
$$(-1)^{\sigma(s)} \cos(s\pi)|P|_s^2\leq (-1)^{\sigma(s)}{(\D{0}{t}{s}P,\D{t}{1}{s}P)} \leq  a(P,P).$$
We can then derive the first inequality in \eqref{elliptic} from the generalized  Poincare inequality \eqref{Poincare}. The second inequality in \eqref{elliptic} is a direct consequence of Cauchy Schwarz inequality.
\end{proof}
\end{lmm}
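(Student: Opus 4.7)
The plan is to prove the two estimates separately: the coercivity bound $c(s)\|P\|_s^2\leq a(P,P)$ by a sign analysis of the fractional pairing combined with the Poincar\'e inequality \eqref{Poincare}, and the continuity bound $a(P,\tilde P)\leq\|P\|_s\|\tilde P\|_s$ by a direct application of Cauchy--Schwarz to the two pieces of $a(\cdot,\cdot)$. The central preliminary observation that I would isolate at the outset is that $\kappa(s):=(-1)^{\sigma(s)}\cos(s\pi)$ is strictly positive in both admissible regimes: for the initial value problem $\sigma(s)=0$ with $s=\nu/2\in(0,\tfrac12)$, so $\cos(s\pi)>0$; for the boundary value problem $\sigma(s)=1$ with $s=\mu/2\in(\tfrac12,1)$, so the negative sign of $\cos(s\pi)$ is canceled by the $(-1)^{\sigma(s)}=-1$ prefactor.

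For the coercivity direction, I would expand
\begin{equation*}
a(P,P)=(-1)^{\sigma(s)}(\D{0}{t}{s}P,\D{t}{1}{s}P)+(qP,P),
\end{equation*}
rewrite the first term as $\kappa(s)\cdot(\D{0}{t}{s}P,\D{t}{1}{s}P)/\cos(s\pi)$, and invoke the lower bound in \eqref{lmm12} to obtain $a(P,P)\geq c_1\kappa(s)|P|_s^2+(qP,P)$. The assumption $q\geq 0$ on $[0,1]$ lets me discard the mass term, and the Poincar\'e-type inequality \eqref{Poincare} upgrades the semi-norm control to the full $H^s_0$-norm via $\|P\|_s^2=\|P\|^2+|P|_s^2\leq (C^2+1)|P|_s^2$, yielding coercivity with $c(s):=c_1\kappa(s)/(C^2+1)$.

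For the continuity direction, I would simply apply Cauchy--Schwarz to each term of $a(P,\tilde P)$: the fractional pairing is bounded by $\|\D{0}{t}{s}P\|\,\|\D{t}{1}{s}\tilde P\|\leq|P|_s|\tilde P|_s$, and the mass term by $\|q\|_\infty\|P\|\,\|\tilde P\|$; both are controlled by $\|P\|_s\|\tilde P\|_s$ after absorbing $\|q\|_\infty$ into the implicit constant.

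The only step that requires genuine care, and therefore the one I would verify first, is the sign bookkeeping establishing $\kappa(s)>0$ in both ranges of $s$; once that is in hand, the remainder is a routine assembly of \eqref{Poincare}, \eqref{lmm12}, and Cauchy--Schwarz, with no work beyond tracking constants.
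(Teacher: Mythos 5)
Your proposal is correct and follows essentially the same route as the paper: positivity of $(-1)^{\sigma(s)}\cos(s\pi)$ combined with \eqref{lmm12} and $q\geq 0$ gives the lower bound on $a(P,P)$ in terms of $|P|_s^2$, the Poincar\'e inequality \eqref{Poincare} upgrades this to the full norm, and Cauchy--Schwarz gives continuity. Your write-up is slightly more explicit than the paper's (isolating $\kappa(s)>0$ in both regimes and tracking the constant $c_1$ and $\|q\|_\infty$), but the argument is identical in substance.
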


Thanks to \eqref{elliptic}, the existence-uniqueness  of the weak formulation \eqref{weak_both} and  the schemes \eqref{scheme_both} follows immediately from the Lax-Milgram Lemma.

As for the error estimate, we have the following result.

\begin{thm}
Let $-1<\alpha\leq 0$, $\lambda\leq 0$ and $\beta>1$. Let $P$ and $P_N$ be respectively  the solution of  \eqref{weak_both} and \eqref{scheme_both} with  $\min\limits_{t\in{[0,1]}}q(t)\geq 0$.
Then, we have
\begin{equation}\begin{aligned}\label{thm_final}
\|P-P_N\|_s\leq &c \sqrt{(\beta-1)^{-\widetilde{m}}\frac{(N-\widetilde{m}+1)!}{(N+1)!}}~ \|\widehat{\partial}_{\frac{\beta-\lambda-2}{2},t}^{\,\widetilde{m}+1}
 P\|_{\chi^{\alpha+\widetilde{m},\lambda}}
 \\+&c\sqrt{\frac{(N+1-\widetilde{m})!}{(\beta+1)^{\widetilde{m}-\alpha}N!}}\left\{c^{\beta}_1\|\widehat{\partial}_{\frac{\beta-\lambda}{2},t}^{\,\widetilde{m}}{Q}\|_{\chi^{\alpha+m-1,\lambda}}+c^{\beta}_2\sqrt{\log N} \|\widehat{\partial}_{\frac{\beta-\lambda}{2},t}^{\,\widetilde{m}}{Q}\|_{\chi^{\alpha+m,\lambda}}\right\}
\end{aligned}\end{equation}
where $c^{\beta}_1={(\beta+1)^{-\frac{1}{2}}},\quad c^{\beta}_2=2\sqrt{\max\{1,\beta+1\}}$ and $\widetilde{m}=\min\{m,N+1\}$.

\begin{proof}
For any $w_N\in X_N$, we derive from \eqref{weak_both} and \eqref{scheme_both} that
 \begin{equation}\label{eq_uf}
 a(P-P_N,w_N)=(Q-\mathcal{I}^{\alpha,\beta,\lambda}_NQ,w_N).
 \end{equation}
Let $e_N=P_N-w_N$, we have
\begin{equation}\label{eq_uf2}
c(s)\|e_N\|^2_s\leq a(e_N,e_N)=a(P-w_N, e_N)+a(P_N-P, e_N).
\end{equation}
Take $w_N=e_N$ in \eqref{eq_uf}, we find
 \begin{equation}\label{eq_uf3}
a(P_N-P,e_N)=(\mathcal{I}^{\alpha,\beta,\lambda}_NQ-Q, e_N)\leq \|\mathcal{I}^{\alpha,\beta,\lambda}_NQ-Q\|\|e_N\|.
\end{equation}
We then derive from \eqref{eq_uf2} and \eqref{elliptic} that
\begin{equation*}
c(s)\|w_N-P_N\|_s\leq c( \|P-w_N\|_s+\|\mathcal{I}^{\alpha,\beta,\lambda}_NQ-Q\|),
\end{equation*}
which, along with \eqref{eq_uf3}, implies that
\begin{equation}\label{optimal}
\|P-P_N\|_s\leq \|P-w_N\|_s+\|w_N-P_N\|_s \leq c\inf_{w\in X_N }\|P-w_N\|_s+c\|\mathcal{I}^{\alpha,\beta,\lambda}_NQ-Q\|.
\end{equation}
Next, we set
 $$w_N(t)=-\int_t^1 \pi^{\alpha,\beta-2,\lambda}_N\{\partial_\tau P\}(\tau) {\rm d}\tau.$$
Obviously, $w_N(1)=0$. Writing $\pi^{\alpha,\beta-2,\lambda}_N\{\partial_\tau P\}(\tau)=\tau^{\frac{\beta-\lambda}{2}-1}\sum_{k=0}^N \tilde c_k (\log \tau)^k$, and integrating by parts, we find
\begin{equation*}
\int_t^1 \pi^{\alpha,\beta-2,\lambda}_N\{\partial_\tau P\}(\tau) {\rm d}\tau=\int_t^1 \tau^{\frac{\beta-\lambda}{2}-1} \sum_{k=0}^N \tilde c_k (\log{\tau})^k {\rm d}\tau=t^\frac{\beta-\lambda}{2}\sum_{k=0}^N \tilde d_k (\log{t})^k,
\end{equation*}
which implies $w_N(0)=0$. Hence, $w_N\in X_N$.
On the other hand, we have
\begin{equation*}
\partial_tP-\partial_tw_N=(I-\pi^{\alpha,\beta-2,\lambda}_N)\partial_tP.
\end{equation*}
We can then derive from the above relations and
 Poincare inequality \eqref{Poincare}   that
\begin{equation*}\label{thm_eq2}
 \|P-w_N\|_s\le c\|\partial_tP-\partial_tw_N\|\leq c \|\partial_tP-\partial_tw_N\|_{\chi^{\alpha,\lambda}}=c\|\partial_tP-\pi^{\alpha,\beta-2,\lambda}_N\partial_t P\|_{\chi^{\alpha,\lambda}}.
\end{equation*}
Finally, combing the above and \eqref{optimal}  and applying Theorems \ref{thm_proj2} and  \ref{thm_Interpolation2}, we obtain the desired result.
\end{proof}
\end{thm}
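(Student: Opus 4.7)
My plan is to follow the standard Strang-type framework adapted to this non-conforming setting, in which the discrepancy comes from applying the interpolant to the right-hand side $Q$. First I would invoke Lemma \ref{lmm_elliptic} to combine coercivity and continuity of $a(\cdot,\cdot)$ on $H^s_0(I)$, and combine these with the Galerkin identity (obtained by subtracting the continuous and discrete variational forms) to produce the basic inequality
\begin{equation*}
\|P-P_N\|_s \;\le\; c\,\inf_{w_N\in X_N}\|P-w_N\|_s \;+\; c\,\|\mathcal{I}^{\alpha,\beta,\lambda}_N Q-Q\|.
\end{equation*}
The argument is standard: for any $w_N \in X_N$, bound $\|w_N-P_N\|_s^2$ from below by coercivity, expand $a(w_N-P_N,w_N-P_N)=a(w_N-P,w_N-P_N)+a(P-P_N,w_N-P_N)$, and use the Galerkin orthogonality perturbed by the quadrature error $(Q-\mathcal{I}^{\alpha,\beta,\lambda}_N Q,w_N-P_N)$; then apply Cauchy--Schwarz and the triangle inequality.

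Next I would choose the quasi-projection $w_N\in X_N$ explicitly rather than rely on an abstract best approximation, because $X_N$ encodes boundary conditions that neither $\pi_N^{\alpha,\beta,\lambda}$ nor the interpolation operator is guaranteed to preserve. The key trick is to project the derivative $\partial_t P$ in the space with the shifted parameter $\beta-2$, then integrate:
\begin{equation*}
w_N(t) := -\int_t^1 \pi^{\alpha,\beta-2,\lambda}_N\{\partial_\tau P\}(\tau)\,\mathrm{d}\tau.
\end{equation*}
Since $\pi^{\alpha,\beta-2,\lambda}_N\{\partial_\tau P\}$ lies in $\mathcal{P}^{(\beta-\lambda-2)/2,\log t}_N$, the antiderivative gains a factor of $t$ and lands in $\mathcal{P}^{(\beta-\lambda)/2,\log t}_N$, so $w_N$ automatically vanishes at both endpoints and belongs to $X_N$ (for the IVP case, only $w_N(0)=0$ is needed, which follows from the same integration). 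Then $\partial_t(P-w_N)=(I-\pi^{\alpha,\beta-2,\lambda}_N)\partial_t P$.

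The third step is to convert the $\|\cdot\|_s$ best-approximation error into something the projection theorem can estimate: a Poincaré-type inequality from \eqref{Poincare} gives $\|P-w_N\|_s \le c\|\partial_t(P-w_N)\| \le c\|(I-\pi_N^{\alpha,\beta-2,\lambda})\partial_t P\|_{\chi^{\alpha,\lambda}}$, at which point Theorem \ref{thm_proj2} (applied with the parameter $\beta$ replaced by $\beta-2$, which is why one needs the hypothesis $\beta>1$ so that $\beta-2>-1$) delivers the first term in \eqref{thm_final}; the $\widehat{\partial}_{(\beta-\lambda-2)/2,t}$-derivatives arise precisely from this shifted projection. The second term is obtained by applying Theorem \ref{thm_Interpolation2} directly to $\|\mathcal{I}^{\alpha,\beta,\lambda}_N Q-Q\|$.

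The main obstacle is the simultaneous need to (i) use a weighted $L^2$-projection to reach the sharp factorial decay, (ii) land in a subspace with prescribed boundary values, and (iii) bound the $H^s_0$-norm even though Theorem \ref{thm_proj2} measures errors in weighted $L^2$-norms of pseudo-derivatives. The parameter shift $\beta\mapsto\beta-2$ threads this needle; identifying this shift and verifying that the antiderivative of a function in $\mathcal{P}^{(\beta-\lambda-2)/2,\log t}_N$ lies in $\mathcal{P}^{(\beta-\lambda)/2,\log t}_N$ (rather than picking up extraneous modes) is the only delicate algebraic check. The rest is bookkeeping via Poincaré's inequality and the already-proved projection and interpolation bounds.
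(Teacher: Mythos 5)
Your proposal matches the paper's proof essentially step for step: the coercivity/continuity argument yielding $\|P-P_N\|_s\le c\inf_{w_N\in X_N}\|P-w_N\|_s+c\|\mathcal{I}^{\alpha,\beta,\lambda}_NQ-Q\|$, the choice $w_N=-\int_t^1\pi^{\alpha,\beta-2,\lambda}_N\{\partial_\tau P\}\,{\rm d}\tau$ with the verification that its antiderivative lands in $X_N$, the identity $\partial_t(P-w_N)=(I-\pi^{\alpha,\beta-2,\lambda}_N)\partial_tP$ combined with the Poincar\'e inequality, and the final application of Theorems \ref{thm_proj2} and \ref{thm_Interpolation2}. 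You also correctly identified the role of the hypothesis $\beta>1$ (so that $\beta-2>-1$), so the argument is complete and coincides with the paper's.
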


\begin{rem}
 As in Corollaries 2.1 and 3.1, we can show that for $P=t^r$ and $Q=t^q$, the
 estimate \eqref{thm_final} leads to exponential convergence rate if $r$ and $q$ are within certain range.
\end{rem}

\subsection{Time-fractional diffusion equations}
As a final application, we consider the time-fractional diffusion equation
\begin{equation}\label{subdiffusion}
\CD{0}{t}{\nu} u(x,t)-\Delta u(x,t)=f(x,t),  \quad x\in\Omega,~t\in(0,T),
\end{equation}
where $\Omega$ be a bounded domain in $\mathbb{R}^d$ ($d=1,2,3$) with suitable initial and boundary conditions. It is clear that the solution of the above equation will exhibit weak singularities at $t=0$ so it is appropriate to use GLOFs for  the time variable. As for the space variables, any consistent approximation can be used. The resulting linear system can be efficiently solved by using a matrix-diagonalization method \cite{Shen94b, ShenTangWang2011}.

As a specific example, we consider $\Omega=(-1,1)^2$ with the following
initial and boundary conditions:
\begin{align}
\label{boundary}&u(x,t)=0, \quad x\in\partial \Omega,~t\in(0,T),\\
\label{initial}&u(x,0)=0, \quad x\in \Omega,
\end{align}
and we use a Legendre-Galerkin method \cite{Shen94b} for the space variables.

Let $N_t$, $N_x$ be respectively  the degree of freedom of GLOFs in time and Legendre polynomials in each spatial direction.

In the first test, we choose the exact solution to be
$u=(t^{\mu}+t^{2\mu})\sin(\pi x_1)\sin(\pi x_2)$ which is smooth in space but has typical weak singularity in time.
The errors in $L^2$-norm with different $\nu$ are plotted in Fig. \ref{graph_SM1}. We observe that the errors converge exponentially w.r.t. $N_t$ and $N_x$. 

In the second test, we take $f=e^{x_1x_2t}$. The explicit form of the exact solution is unknown but is expected to be weakly singular at t=0. We used a fine mesh to compute a reference solution, and plotted the convergence rate in Fig. \ref{graph_SM2}. Again, exponential convergence rates are observed for both $N_t$ and $N_x$. 

 \begin{figure}[htp!]
\begin{minipage}{0.495\linewidth}
\begin{center}
\includegraphics[scale=0.4]{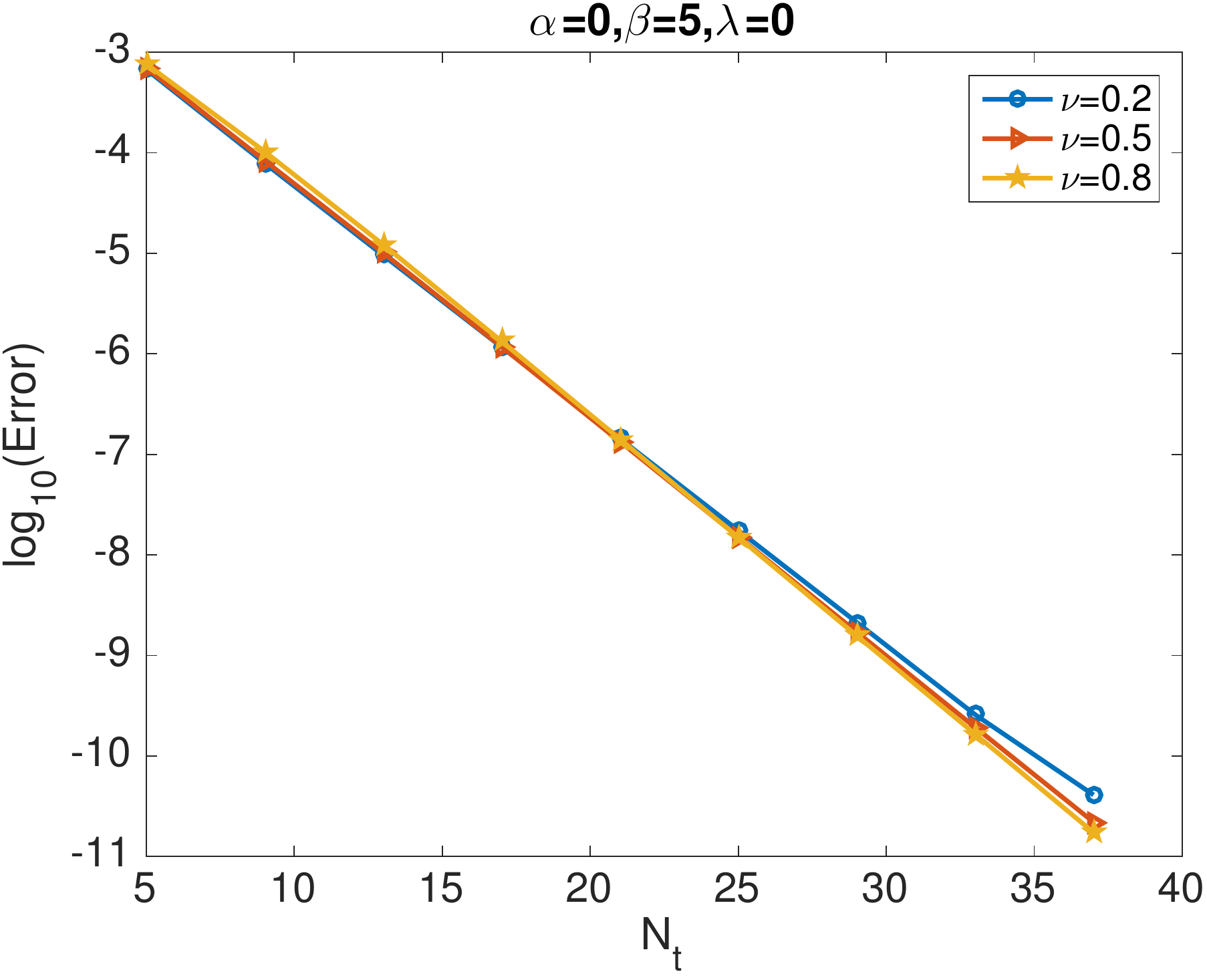}
\end{center}
\end{minipage}
\begin{minipage}{0.495\linewidth}
\begin{center}
\includegraphics[scale=0.4]{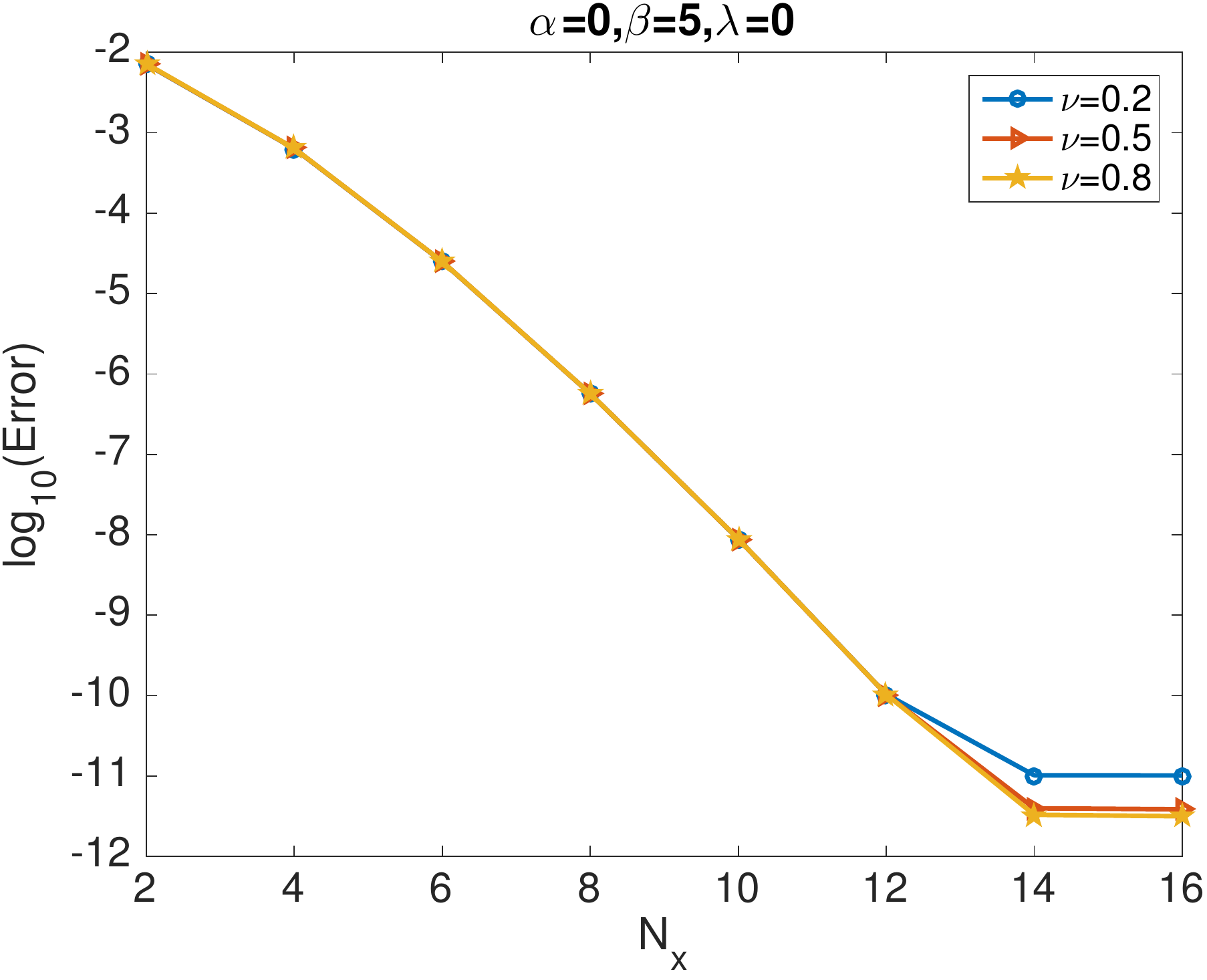}
\end{center}
\end{minipage}
\caption{  $u=(t^{\mu}+t^{2\mu})\sin(\pi x_1)\sin(\pi x_2),~\mu=0.6,~T=1.$ }\label{graph_SM1}
\end{figure}

 \begin{figure}[htp!]
\begin{minipage}{0.495\linewidth}
\begin{center}
\includegraphics[scale=0.4]{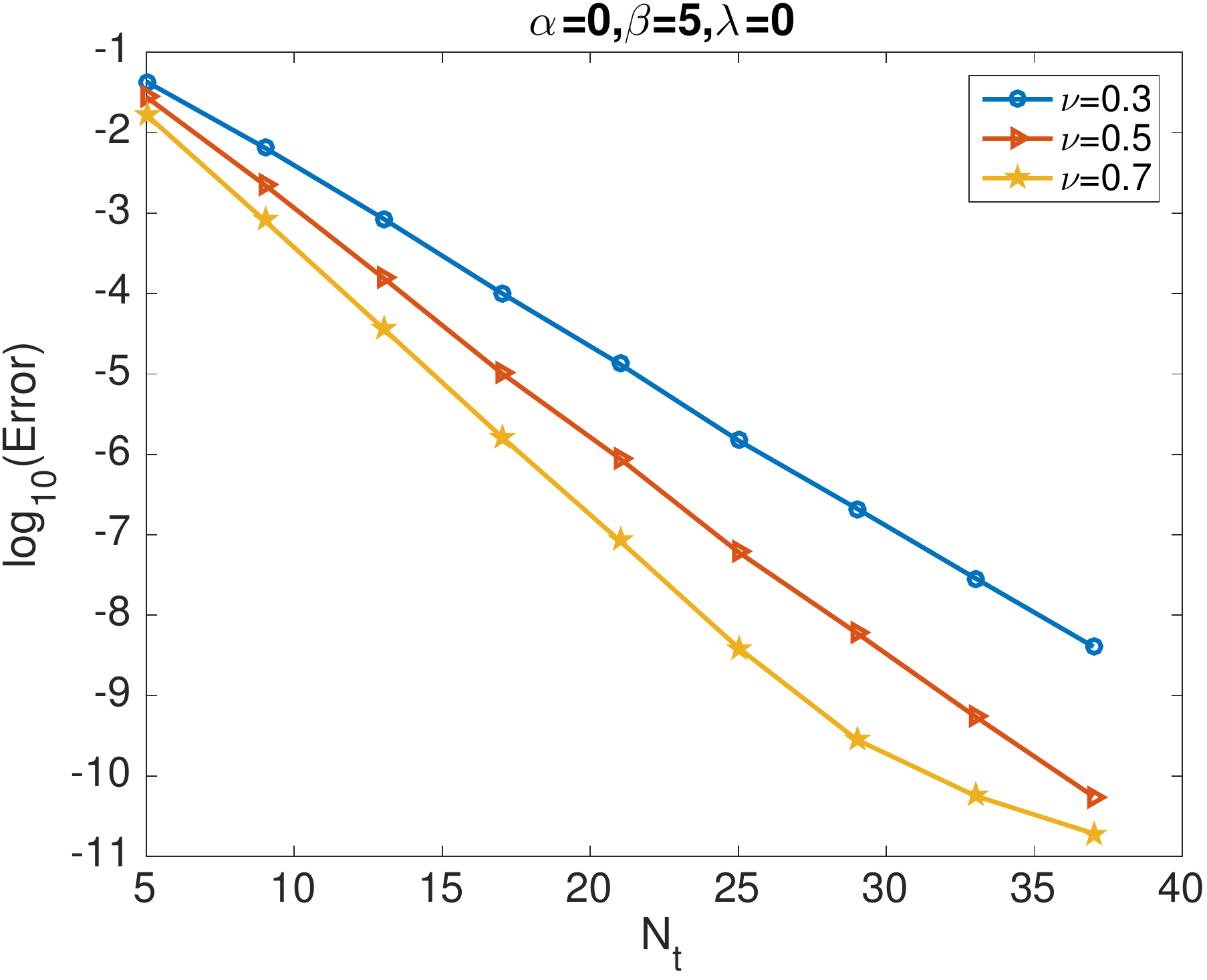}
\end{center}
\end{minipage}
\begin{minipage}{0.495\linewidth}
\begin{center}
\includegraphics[scale=0.4]{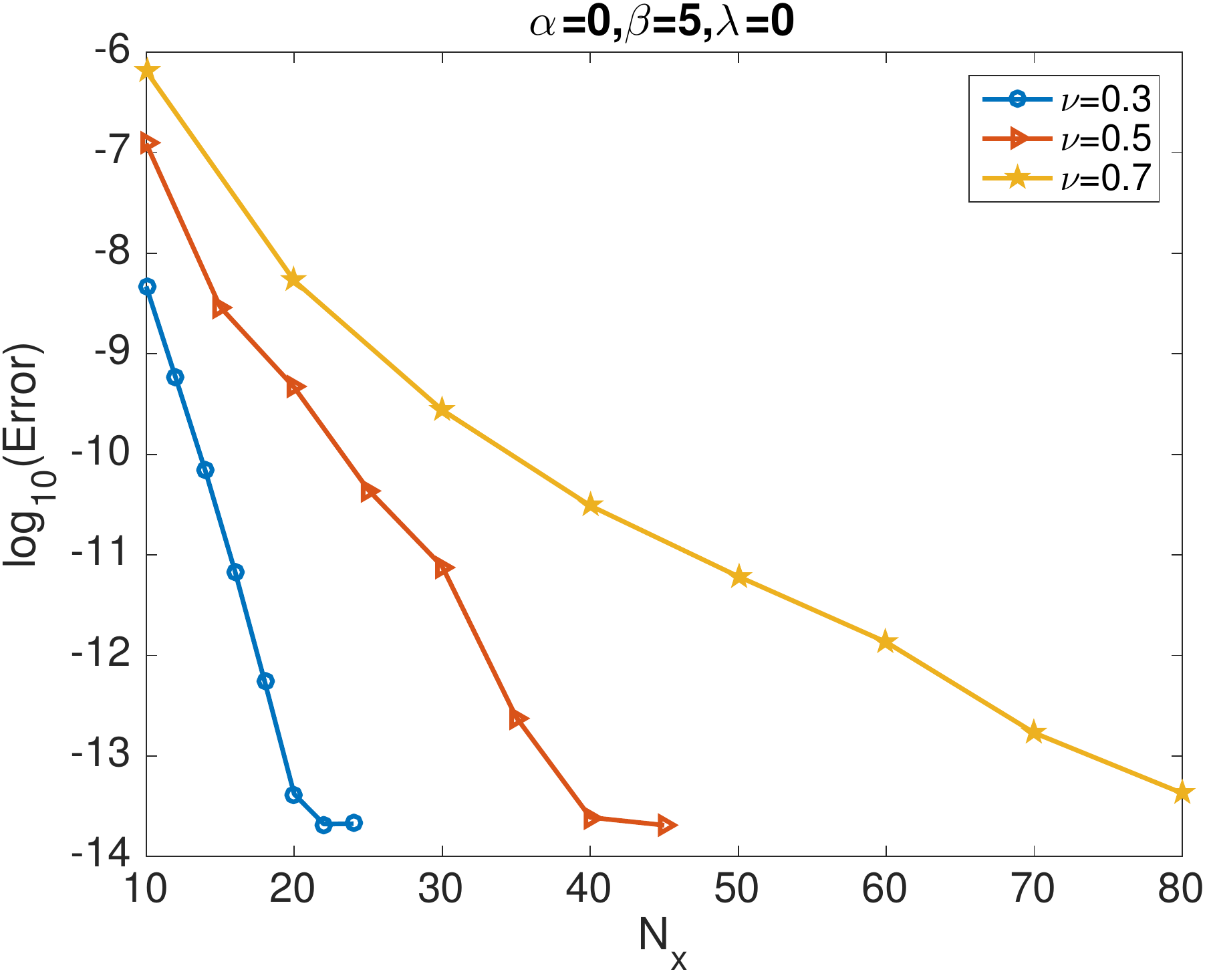}
\end{center}
\end{minipage}
\caption{  $f=e^{x_1x_2t},\quad T=\frac{1}{2}.$ }\label{graph_SM2}
\end{figure}

\section{Concluding remarks}
We constructed in  this paper two new classes of orthogonal functions, the log orthogonal functions (LOFs) and the generalized log orthogonal functions (GLOFs) by applying a log mapping to the Laguerre functions. We developed basic approximation theory for these new orthogonal functions. The approximate results reveal that  the  new orthogonal functions are particularly suitable for functions which have weak singularities at one endpoint. In particular, for functions involving one or multiple terms of $t^\alpha$ with $\alpha$ in an adjustable range, its approximation by the new orthogonal functions will converge exponentially, as opposed to a low algebraic rate if usual orthogonal polynomials are used.

As applications, we considered several typical fractional differential equations whose solutions usually exhibit weak singularities at one endpoint.
By using the GLOFs as basis functions, we constructed Galerkin methods for solving these fractional differential equations, and derived corresponding error analysis which reveals the fact that exponential convergence rate can be achieved even if the solution is weakly singular at one endpoint.
We provided ample numerical results to show that our methods based on  GLOFs are very effective for problems with solutions having weak singularities at one endpoint, such as the cases in many fractional differential equations. In particular, a special case of the GLOFs introduced in this paper has been used in \cite{CSZZ20} to develop a very efficient and accurate  spectral-Galerkin method (in the time direction) for
solving the time-fractional subdiffusion equations. 

The methods presented in this paper is limited to problems with singularities at one endpoint. To deal with problems having  singularities at both endpoints,  one can use  a two-domain  approach with  GLOFs on each subdomain, or to
 construct new classes of orthogonal functions which are suitable for problems having  singularities at both endpoints. This topic will be the subject of a future study.

\bigskip

\noindent{\bf Acknowledgment.} S. C. would like to thank Professor Lilian Wang for many useful suggestions and the discussion of the subsection \ref{sec3.3} during his visit at Nanyang Technological University, Singapore.

\begin{appendix}
\section{Some properties of Laguerre polynomials} \label{Appendix_Laguerre}
\renewcommand{\theequation}{L.\arabic{equation}}
\setcounter{equation}{0}
\noindent{\em The three-term recurrence}
\begin{equation}\begin{aligned}\label{Three-term R}
&\mathscr{L}^{(\alpha)}_0(y)=1,\qquad \mathscr{L}_1^{(\alpha)}=-y+\alpha+1,\\
&\mathscr{L}^{(\alpha)}_{n+1}(y)=\frac{2n+\alpha+1-y}{n+1}\mathscr{L}^{(\alpha)}_n(y)-\frac{n+\alpha}{n+1}\mathscr{L}^{(\alpha)}_{n-1}(y).
\end{aligned}\end{equation}
{\em Sturm-Liouville problem}
\begin{equation}\label{LaguSLProb}
 y^{-\alpha}e^y\partial_y\left( y^{\alpha+1}e^{-y}\partial_y \mathscr{L}_n^{(\alpha)}(y)\right)+n \mathscr{L}_n^{(\alpha)}(y)=0,
\end{equation}
{\em Derivative relations}
\begin{equation}
\mathscr{L}_n^{(\alpha)}(y)=\partial _y\mathscr{L}_{n}^{(\alpha)}(y)-\partial _y \mathscr{L}_{n+1}^{(\alpha)}(y),\label{LaguDerivative1}
\end{equation}
\begin{equation}
 y\partial _y \mathscr{L}_n^{(\alpha)}(y)=n\mathscr{L}_{n}^{(\alpha)}(y)-(n+\alpha)\mathscr{L}_{n-1}^{(\alpha)}(y),\label{LaguDerivative2}
\end{equation}
\begin{equation}
\partial _y \mathscr{L}_n^{(\alpha)}(y)=- \mathscr{L}_{n-1}^{(\alpha+1)}(y)=-\sum_{k=0}^{n-1}\mathscr{L}_{k}^{(\alpha)}(y).\label{LaguDerivative3}
\end{equation}
{\em Laguerre-Gauss quadrature}\\
Let $\{y_j^{(\alpha)}\}_{j=0}^N$ be the zeros of $\mathscr{L}_{n+1}^{(\alpha)}(y)$, then  the associated weights are
  \begin{equation}\label{Lagweights}
  \omega_j^{(\alpha)}= \frac{N+\alpha+1}{(N+\alpha+1)N!} \frac{y_j^{(\alpha)}}{[\mathscr{L}_{N}^{(\alpha)}(y_j^{(\alpha)})]^2}, \quad 0\leq j\leq N,
  \end{equation}
the { quadrature formula} is
  \begin{equation}\label{accurateintegral}
  \int_{\mathbb{R}^+} p(y) y^\alpha e^{-y} \,{\rm d}y= \sum_{j=0}^N p(y_j^{(\alpha)})\omega_j^{(\alpha)},\quad \forall \, p\in \mathcal{P}^y_{2N+1}.
  \end{equation}

\end{appendix}

\bibliography{ref_log}

\end{document}